\numberwithin{equation}{section}
\theoremstyle{plain} 
\newtheorem{theorem}{Theorem}[section]
\newtheorem{lemma}[theorem]{Lemma}
\newtheorem{proposition}[theorem]{Proposition}
\newtheorem{definition}[theorem]{Definition}
\newtheorem{corollary}[theorem]{Corollary}
\newtheorem{assumption}[theorem]{Assumption}
\theoremstyle{definition} 
\newtheorem{remark}[theorem]{Remark}
\let\oldabstract\abstract
\renewcommand\abstract{%
  \providecommand\keywords{\par\medskip\noindent\textit{Keywords:}\xspace}
  \oldabstract\noindent\ignorespaces}
\DeclareRobustCommand*\subref{\@ifstar\sf@@subref\sf@subref}
\newcommand{\bx}{\bm x}
\newcommand{\by}{\bm y}
\newcommand{\bv}{\bm v}
\newcommand{\be}{\bm e}
\newcommand{\bw}{\bm w}
\newcommand{\dee}{\text{\,d}}
\newcommand{\detproc}{\textrm{DPP}}
\newcommand{\lambdat}{\tilde{\lambda}}
\newcommand{\real}{\mathbb{R}}
\newcommand{\complex}{\mathbb{C}}
\newcommand{\conj}[1]{\overline{#1}}
\newcommand{\tr}{\textrm{tr}}
\newcommand{\spanspace}[1]{\textrm{span}_{\mathbb C}\{#1\}}
\newcommand{\var}{\ensuremath{\textrm{Var}}}
\newcommand{\mean}{\textrm{E}}
\newcommand{\prob}{\textrm{P}}
\newcommand{\fapp}{f^{\mathrm{app}}}
\newcommand{\Dapp}{D_{\mathrm{app}}}
\newcommand{\Capp}{C_{\mathrm{app}}}
\newcommand{\Cappstat}{C_{{\mathrm{app}},0}}
\newcommand{\Ct}{\tilde{C}}
\newcommand{\Ctapp}{\Ct_{{\mathrm{app}}}}
\newcommand{\Ctappstat}{\Ct_{{\mathrm{app}},0}}
\newcommand{\wm}{Whittle-Mat{\'e}rn}
\newcommand{\Cdom}{\ensuremath{C^{\text{dom}}}}
\newcommand{\Cdomstat}{\ensuremath{C_0^{\text{dom}}}}
\newcommand{\Xdom}{\ensuremath{X^{\text{dom}}}}
\newcommand{\Xapp}[1]{\ensuremath{X_{#1}^{\text{app}}}}
\newcommand{\Xper}[1]{\ensuremath{X_{#1}^{\text{per}}}}
\newcommand{\Yapp}[1]{\ensuremath{Y_{#1}^{\text{app}}}}
\newcommand{\rhomax}{\ensuremath{\rho_{\text{max}}}}
\newcommand{\unitbox}{\ensuremath{[-1/2,1/2]^d}}
\newcommand{\halfbox}{\ensuremath{[-1/4,1/4]^d}}
\newcommand{\conv}{\ensuremath{\star}}
\newcommand{\hc}{\ensuremath{h}}
\newcommand{\hcest}{\ensuremath{\hat{h}}}
\newcommand{\Z}{\mathbb{Z}}
\newcommand{\N}{\mathbb{N}}
\begin{document}

\title{Determinantal point process models\\ and
  statistical inference:\\ Extended version}

\author[1]{Fr\'{e}d\'{e}ric Lavancier}
\author[2]{Jesper M{\o}ller}
\author[2]{Ege Rubak\footnote{An alphabetical ordering has been used
    since all authors have made significant contributions to the paper.}}
\affil[1]{ Laboratoire de Math\'ematiques Jean Leray\\
University of Nantes, France\\ Frederic.Lavancier@univ-nantes.fr}
\affil[2]{Department of Mathematical Sciences\\
Aalborg University\\ jm@math.aau.dk, rubak@math.aau.dk}

\date{}

\maketitle

\begin{abstract}
  Statistical models and methods for determinantal point processes (DPPs) seem
  largely unexplored. We demonstrate that DPPs provide useful
  models for the description of spatial point pattern datasets where
  nearby points repel each other.
   Such data are usually modelled by Gibbs
  point processes, where the likelihood and moment
  expressions are intractable and simulations
  are time consuming. We exploit the appealing probabilistic properties of
  DPPs to develop parametric models, where the likelihood and
   moment expressions can be easily
  evaluated and realizations can be quickly simulated.
  We discuss how statistical inference is conducted using the likelihood
  or moment properties of DPP models, and we
  provide freely available software for simulation
  and statistical inference.

  \keywords maximum likelihood based inference, 
  point process density, product densities, simulation, repulsiveness,
  spectral approach.
\end{abstract}

\section{Introduction}\label{sec:intro}

Spatial point process models where nearby points in
the process repel each other are often used for describing point pattern
datasets exhibiting regularity (in contrast to aggregated or clustered
point pattern
datasets). This paper studies statistical models and inference
procedures for 
determinantal point processes (DPPs) which constitute a particular
tractable class of repulsive
spatial point processes.

\subsection{Background and aim of the paper}\label{sec:aim}

DPPs are largely unexplored in statistics,
though they possess a number of very attractive properties and have
been studied in mathematical physics, combinatorics, and random matrix
theory even before the general notion was introduced in
\cite{Macchi:75}. They have been used to model fermions in quantum
mechanics, in classical Ginibre and circular unitary ensembles from
random matrix theory, for examples arising from non-intersecting
random walks and random spanning trees, and much more, see Section~2
in \cite{Soshnikov:00} and Section~4.3 in \cite{Hough:etal:09}.  They
can be defined on a locally compact space, where the two most
important cases are the
  $d$-dimensional Euclidean space $\real^d$ and a discrete state
space. Recently, DPPs have been used in machine learning
\citep{Kulesza:Taskar:12},
where the state space is finite (basically
a directory for statistical learning), and in wireless communication to model the locations of network nodes \citep{Leonardi13,Miyoshi:Shirai13}. In recent years,
DPPs have also been much studied in probability theory, see
\cite{Hough:etal:09} and the references therein.

In the present
 paper, we address several
statistical problems for DPPs defined on $\real^d$ (or a sub-region of
 $\real^d$).
Our main aims are:
\begin{enumerate}
\item[(i)] to provide a short and accessible survey for
statisticians on the definition,
existing conditions, moment properties, density expressions, and
simulation procedures for DPPs;
\item[(ii)] to clarify when stationary DPPs exist and to
develop parametric model classes for stationary DPPs (which later are
extended to inhomogeneous DPPs);
\item[(iii)]
to understand to
which extent DPPs can model repulsiveness (or regularity or
inhibition) and to demonstrate that DPPs provide useful flexible
  models for the description of repulsive spatial point processes; 
\item[(iv)] to construct useful approximations of certain
spectral-decompositions
appearing when dealing with likelihoods and simulations of DPPs;
\item[(v)] to discuss how statistical inference is conducted using the likelihood
  or moment properties of stationary as well as inhomogeneous DPP models;
\item[(vi)] to apply our methodology on real spatial point
  pattern datasets showing different degrees of repulsiveness;
\item[(vii)] to provide freely available software for simulation
  and statistical inference.
\end{enumerate}
While \cite{Hough:etal:09} provides an excellent and comprehensive survey of the
interest in probability theory on DPPs, our survey (item (i) above) is a less
technical exposition of DPPs which
provides the needed background material for our new contributions
(items (ii)-(vii) above).

\subsection{Repulsiveness and point pattern datasets}\label{sec:1.2}

Formal definitions of  repulsiveness in a spatial point process
will be given later in this paper, either in terms of second order
properties (the so-called pair correlation function and the
$K$-function, see Section~\ref{sec:defdet} and Appendix~\ref{sec:quantify})
or in terms of the Papangelou conditional intensity
(Appendix~\ref{sec:proofdensitythm}).
 The six point pattern datasets in
Figure~\ref{fig:data}
 will be fitted using parsimonious parametric
models of DPPs. These datasets have been
selected to illustrate the range of repulsiveness which can be
modelled by DPPs. They will be detailed and analysed in
Sections~\ref{sec:hom}-\ref{sec:inhom}, where Figure~\ref{fig:data}\subref{subfig:data:spanish}-\subref{subfig:data:termites}
will be modelled by stationary DPPs and
Figure~\ref{fig:data}\subref{subfig:data:Japanese}-\subref{subfig:data:mucosa}
by inhomogeneous DPPs. 

\begin{figure}[!htbp]%
\centering
\subfloat[][Locations of 69 Spanish towns in a 40 mile
by 40 mile region.]{
 \includegraphics[angle=0,width=.4\textwidth]{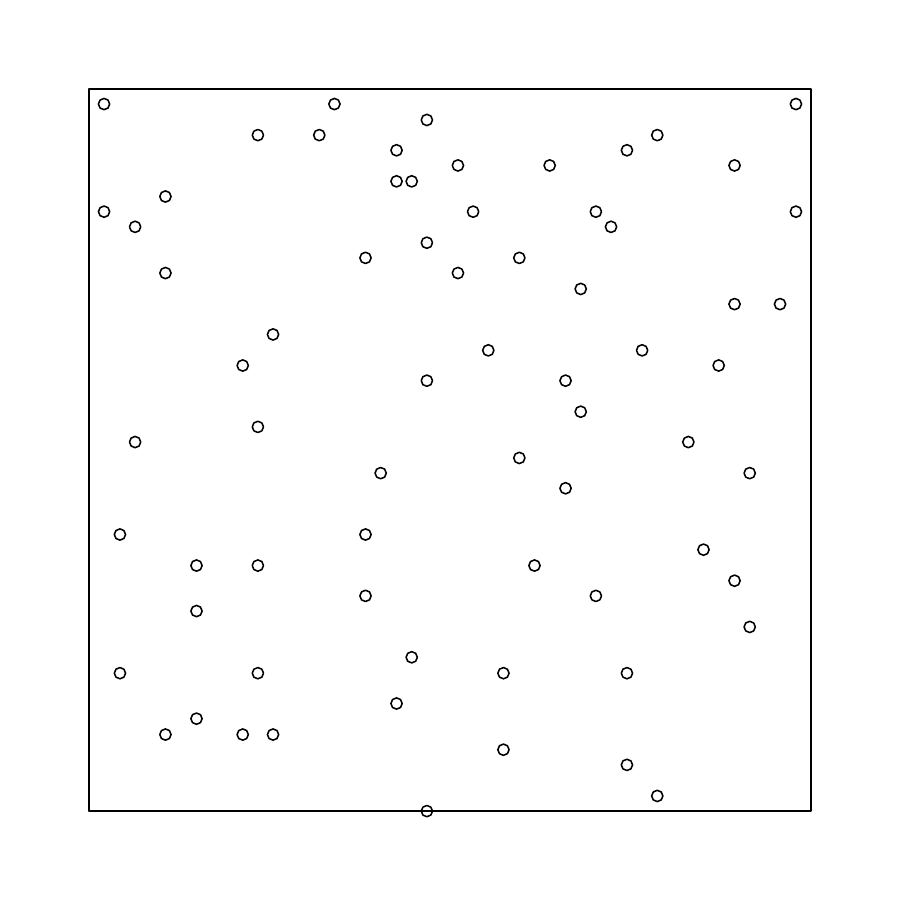}
 \label{subfig:data:spanish}
 }
 \quad
\subfloat[][Locations of 303 cells of two types in a
\SI{0.25}{\milli\metre} by \SI{0.25}{\milli\metre} region of a
histological section of the kidney of a hamster.]{
 \includegraphics[angle=0,width=.4\textwidth]{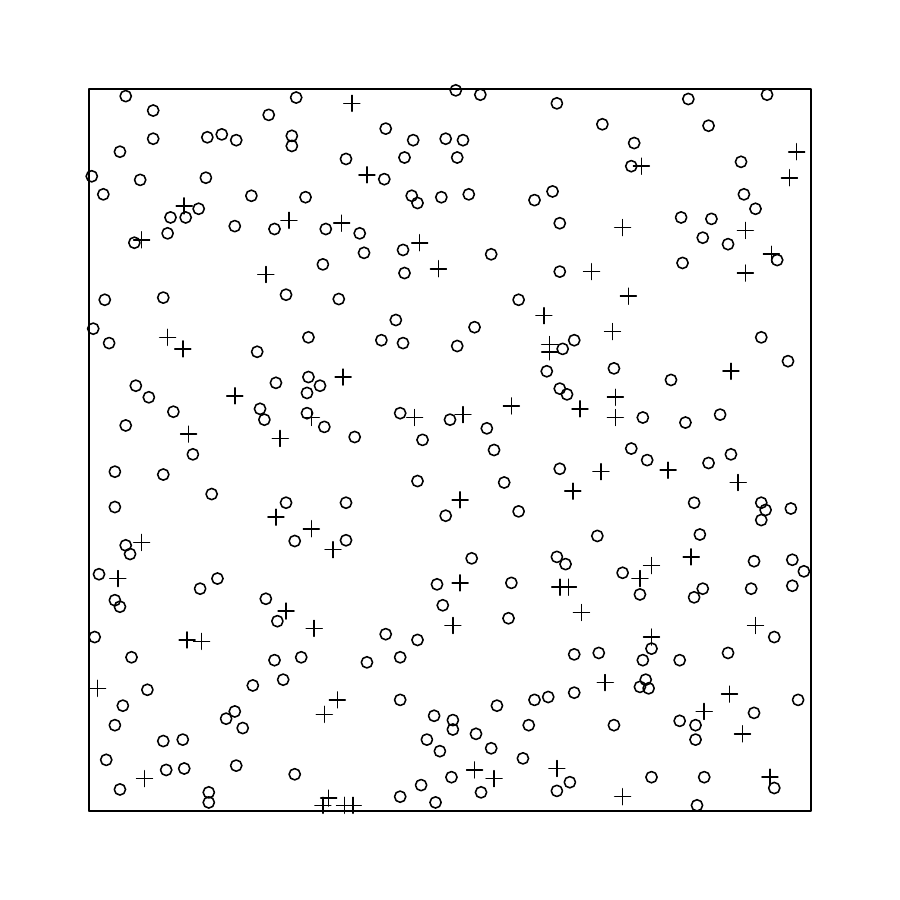}
 \label{subfig:data:hamster}
 }
 \quad
 \subfloat[][Locations of 244 trees of the species oak and beech in a
 \SI{80}{\metre} by \SI{80}{\metre} region.]{
 \includegraphics[angle=0,width=.4\textwidth]{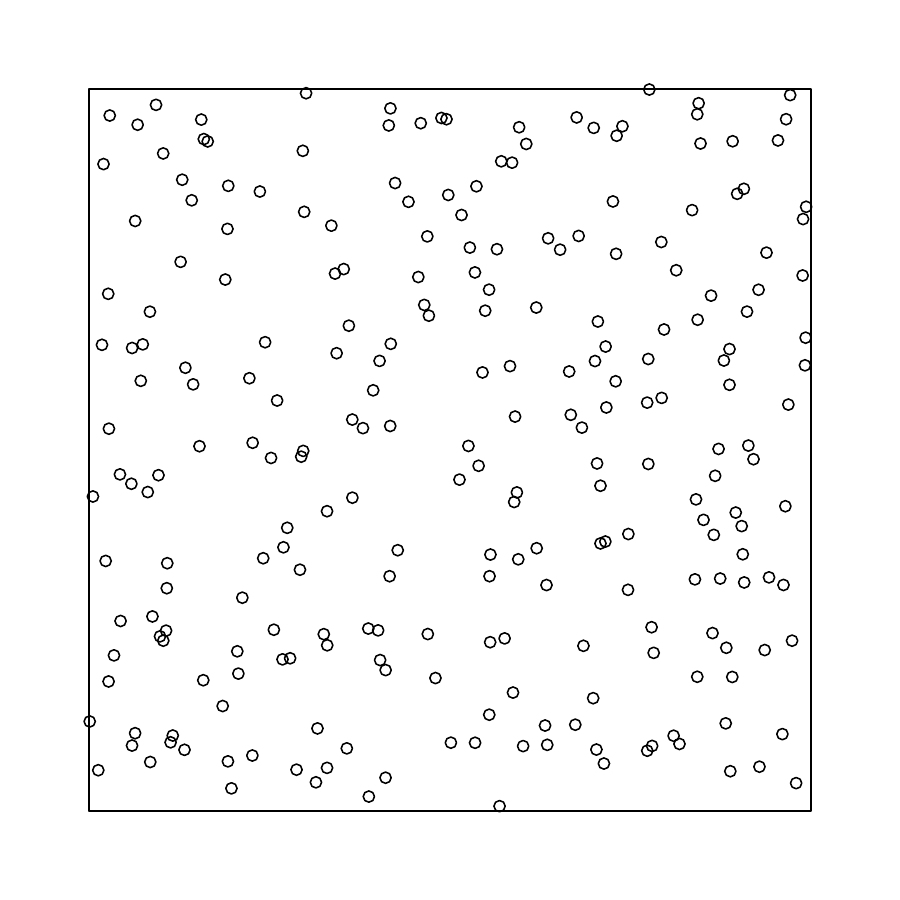}
 \label{subfig:data:oakbeech}
 }
 \quad
 \subfloat[][Locations of 48 termite mounds in a \SI{250}{\metre} by
 \SI{150}{\metre} region.]{
 \includegraphics[angle=0,width=.4\textwidth]{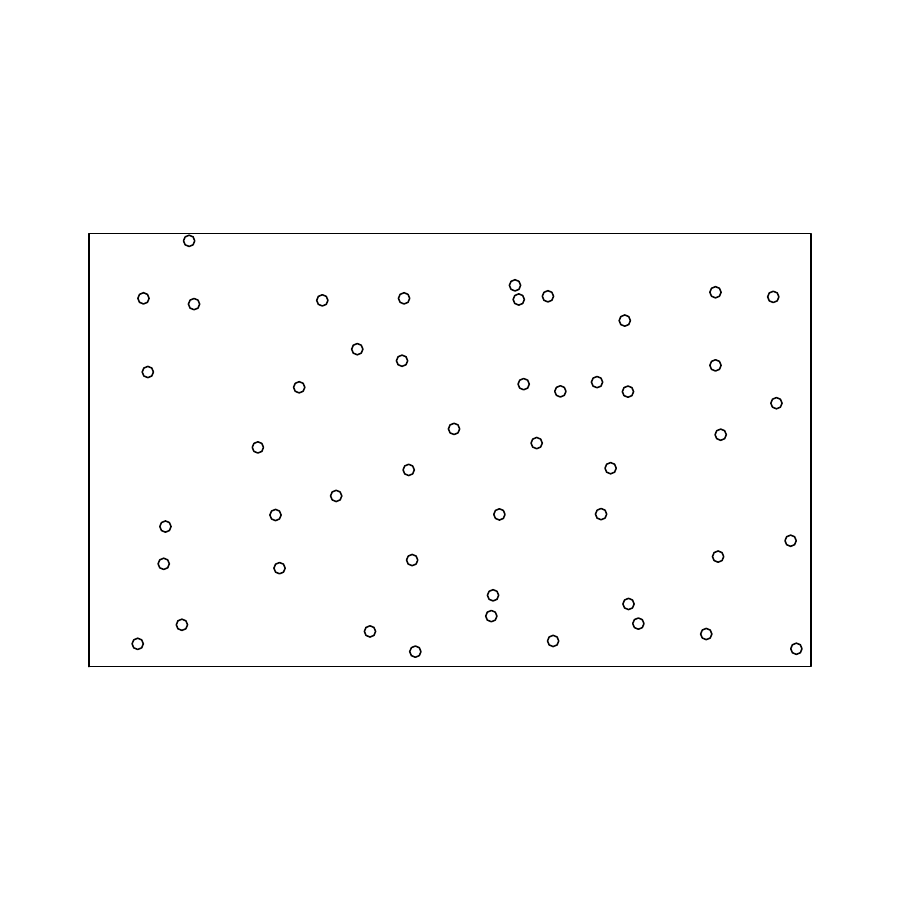}
 \label{subfig:data:termites}
 }
 \quad
 \subfloat[][Locations of 204 seedlings and saplings of Japanese black
    pines in a \SI{10}{\metre} by \SI{10}{\metre} region.]{
 \includegraphics[angle=0,width=.4\textwidth]{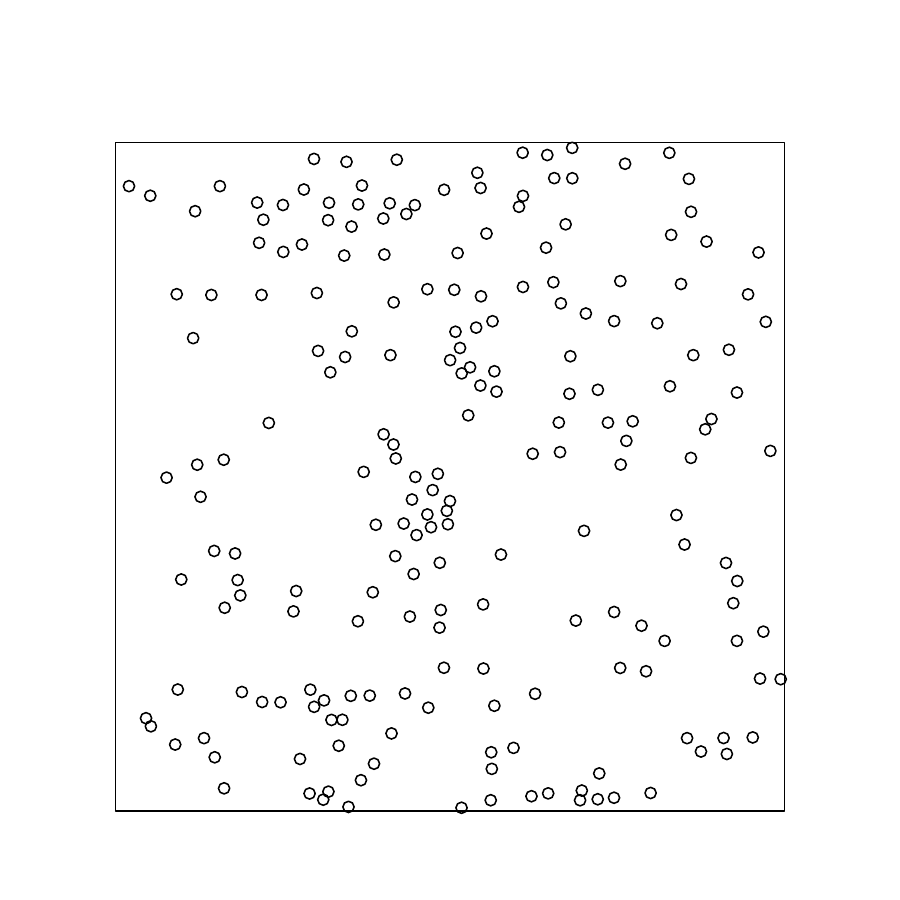}
 \label{subfig:data:Japanese}
 }
 \quad
 \subfloat[][Locations of 876 cells of a mucous membrane in a rectangular region rescaled to
 unit width and height 0.81.]{
 \includegraphics[angle=0,width=.4\textwidth]{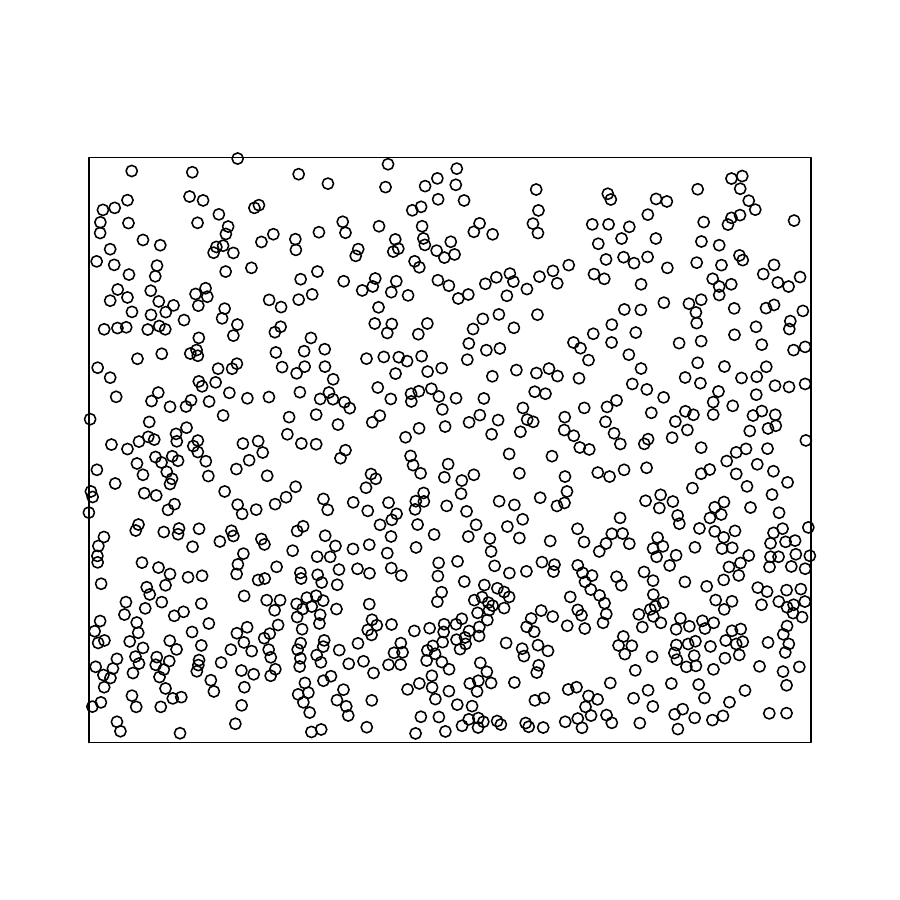}
 \label{subfig:data:mucosa}
 }
  \caption{
    Examples of spatial point patterns. 
  }
  \label{fig:data}
\end{figure}

For comparison Figure~\ref{fig:intro} shows realizations in the unit
square of three stationary DPPs with the same intensity of points.
Figure~\ref{fig:intro}\subref{subfig:poisson:sim} shows a simulation of a Poisson
process, which is a
special case of a DPP with no repulsion (or no
interaction). Figure~\ref{fig:intro}\subref{subfig:detgauss:sim}
 shows a simulation of a DPP with
moderate repulsion; in comparison with Figure~\ref{fig:intro}\subref{subfig:poisson:sim}
the point pattern looks more regular.
Figure~\ref{fig:intro}\subref{subfig:detjinc:sim} shows, in a sense made more
precise
in Appendix~\ref{sec:quantify},
a simulation of a DPP for the
strongest case of repulsiveness when the intensity is fixed.
The point pattern in Figure~\ref{fig:intro}\subref{subfig:detjinc:sim}
is clearly regular but not to the same extent as can be obtained in a Gibbs hard-core point
process.  

\begin{figure}[!htbp]%
\centering
\subfloat[][]{
 \includegraphics[angle=0,width=.27\textwidth]{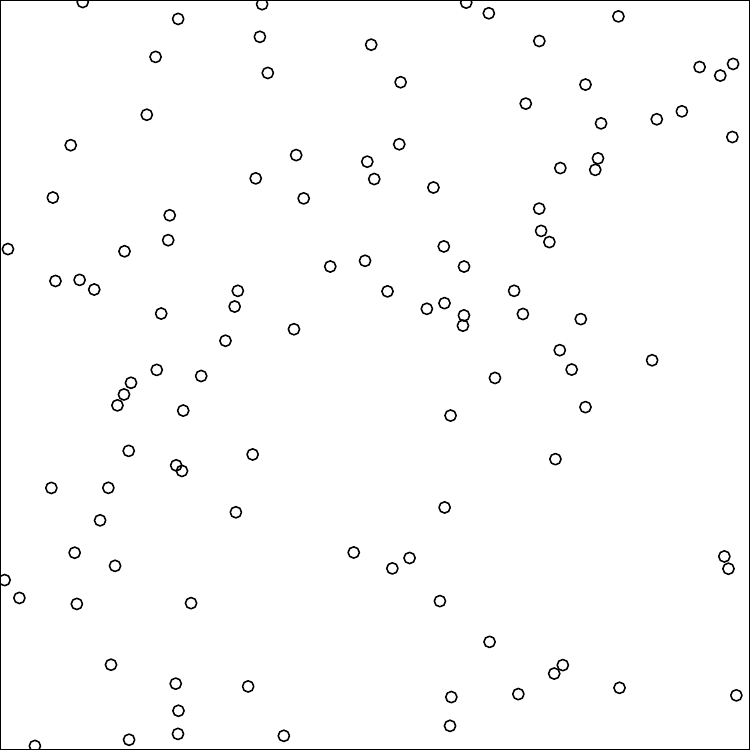}
 \label{subfig:poisson:sim}
 }
 \quad
\subfloat[][]{
 \includegraphics[angle=0,width=.27\textwidth]{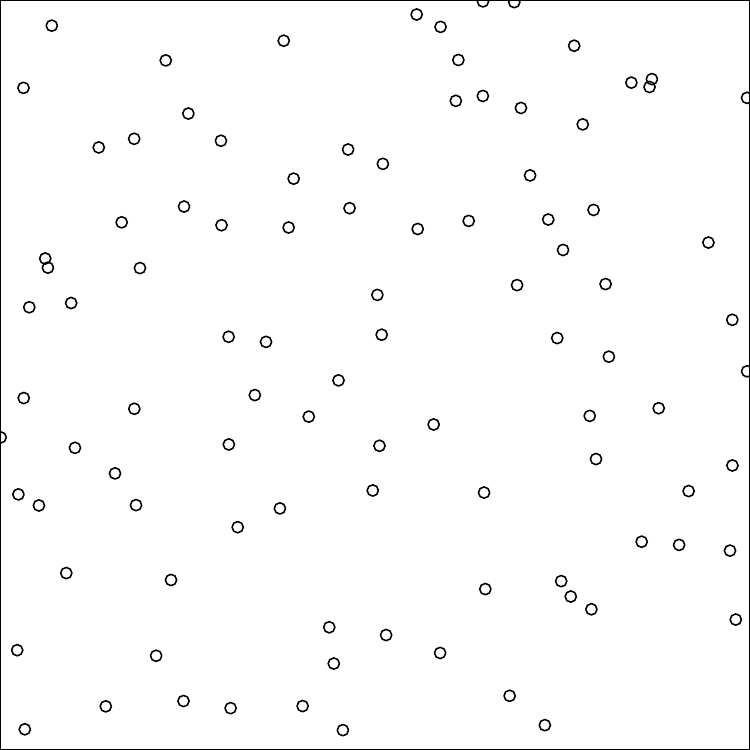}
 \label{subfig:detgauss:sim}
 }
 \quad
 \subfloat[][]{
 \includegraphics[angle=0,width=.27\textwidth]{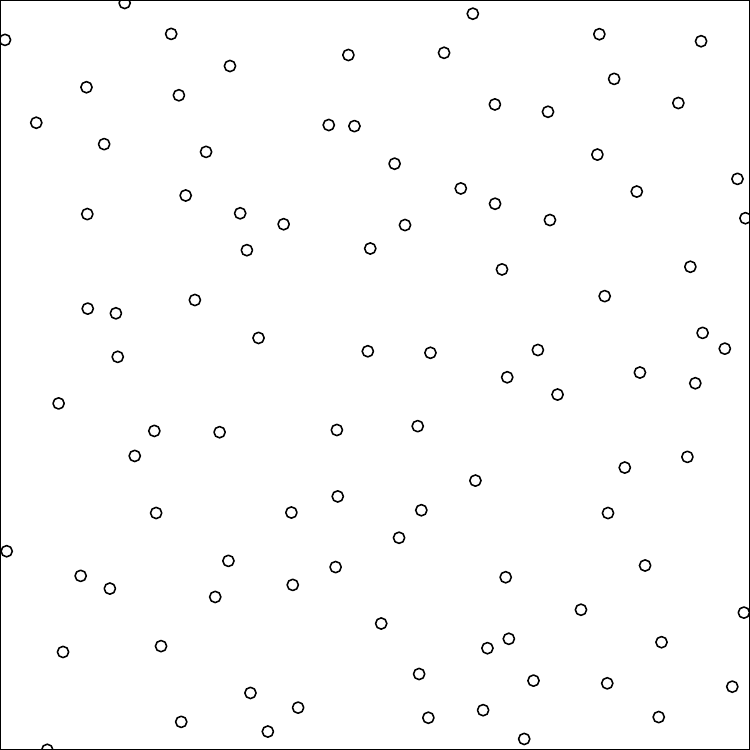}
 \label{subfig:detjinc:sim}
 }
  \caption{
    Realizations of stationary DPPs within a unit square:
    \subref{subfig:poisson:sim} a Poisson process;
    \subref{subfig:detgauss:sim} a
    DPP with moderate repulsion (a Gaussian DPP as described in Section~\ref{sec:examples});
    \subref{subfig:detjinc:sim} a stronger repulsive DPP (a
    jinc-like DPP as
    described in Section~\ref{sec:spectral}).}
  \label{fig:intro}
\end{figure}

\subsection{Gibbs point processes versus determinantal point processes}\label{e:vs}

 The usual class of
point processes used for modelling repulsiveness is the class of Gibbs
point processes, including Markov point processes and pairwise
interaction point processes
\citep{ripley:77,ripley:kelly:77,stoyan:kendall:mecke:95,lieshout:00,Diggle:03,moeller:waagepetersen:00,illian:penttinen:stoyan:stoyan:08,Gelfand:etal:10}.
In general for Gibbs point processes,
\begin{itemize}
\item moments are
not expressible in closed form;
\item likelihoods involve intractable
normalizing constants;
\item rather elaborate Markov chain Monte
Carlo methods are needed for simulations and approximate likelihood
inference;
\item when
dealing with infinite Gibbs point processes defined on $\real^d$,
`things' become rather complicated, e.g.\ conditions for
existence and uniqueness
as well as problems with edge effects;
\end{itemize}
see
\cite{moeller:waagepetersen:00,moeller:waagepetersen:07} and the
references therein. For Gibbs point processes, 
as maximum likelihood inference is complicated, 
the most popular and much quicker alternative inference
procedure is
based on pseudo-likelihood \citep{besag:77b,jensen:moeller:91,baddeley:turner:00,Gelfand:etal:10}. The pseudo-likelihood function is specified
in terms of the Papangelou conditional intensity which does not depend
on the normalizing constant from the likelihood.

In contrast, DPPs possess a number of
appealing properties: Considering a DPP
defined on $\real^d$ (with $d=2$ in most of our examples), its distribution is
 specified
 by a kernel (which we assume is a continuous complex covariance
 function) $C$ defined on $\real^d\times
 \real^d$ and which is properly
 scaled
 (these regularity
 conditions on $C$ are imposed to ensure existence of the process
as discussed in Section~\ref{sec:exist}). Then
\begin{enumerate}
\item[(a)] there are simple conditions for existence of the process
  and there is no phase transition: uniqueness of the DPP is ensured
  when it exists;
\item[(b)] moments are known:
by the very definition, all orders of moments are
described by certain determinants of matrices with entries given by
$C$ (Section~\ref{sec:defdet});
\item[(c)] edge effects is not a problem: the restriction of the
  DPP to a compact
  subset $S\subset\real^d$ is also a DPP with its
  distribution specified by the restriction of $C$ to $S\times S$;
 \item[(d)]
 the DPP restricted to $S$ ($S\subset\real^d$ compact)
has a
density (with respect to a Poisson
process): the density is
given by a normalizing constant, with a closed form expression, times
the determinant of a matrix with entries given
by a certain
kernel $\Ct$ which is
obtained by a simple
transformation of the eigenvalues in a spectral representation
of $C$ restricted to $S\times S$ (Section~\ref{sec:densities});
\item[(e)]   if such a spectral representation is not explicitly known, we can approximate it in practice by a Fourier series (Section~\ref{sec:approx2});
\item[(f)] the DPP can easily be simulated: basically because it
is a mixture of `determinantal projection point
processes' (Section~\ref{sec:sim}).
\end{enumerate}
Indeed, DPPs possess further useful properties, e.g.\
 a one-to-one smooth transformation or an independent thinning of the
DPP is also a DPP (Appendix~\ref{sec:trans-thin}); the reduced Palm measure of a DPP is also a DPP (Appendix~\ref{sec:Palm}).
Due to (a)-(f), modelling and
estimation for parametric families of DPPs become tractable as
discussed in Sections~\ref{sec:stationary}-\ref{sec:conremarks}. In
particular, we shall calculate likelihood functions, maximum
likelihood estimates, and likelihood ratio statistics for parametric
DPP models.

The link between Gibbs point processes and DPPs have been studied in
\cite{Georgii:Yoo:05}, where the key is the description of the
Papangelou conditional
intensity for a DPP. From a statistical perspective this link is of
limited interest, since for parametric families of DPPs,
the Papangelou conditional
intensity is not easier to handle than the likelihood, and the
pseudo-likelihood is in fact less easy to calculate than the likelihood.
Although DPPs may be considered as a subclass of Gibbs point processes,
at least when they are defined on a bounded region, we rather think of DPPs
as an interesting model class in itself.

\subsection{Software}

The statistical analyses in this paper have been conducted with R
\citep{R:11}. The software we have developed is freely available
as a supplement to the \texttt{spatstat} library
\citep{baddeley:turner:05} enabling users to both simulate and fit
parametric models of
stationary and inhomogeneous DPP models.

\subsection{Outline}

The paper is organized as follows. Section~\ref{sec:background}
is our tutorial (cf.\ (i) in Section~\ref{sec:aim}). In
Section~\ref{sec:stationary} we
study stationary DPPs for several purposes: to simplify the
general condition for existence of a DPP; to construct useful
parametric model classes of DPPs; and to understand to which extent
they can model repulsiveness. 
Using a Fourier basis approach, we derive in
Section~\ref{sec:approx2} 
approximations of the spectral representations
of the kernels $C$ and $\Ct$ (cf.\ (d)-(e) in Section~\ref{e:vs})
which make simulation and inference feasible for parametric models
of DPPs. Section~\ref{sec:hom} presents statistical
inference procedures for parametric models of
stationary DPPs and parsimonious parametric DPP
models are fitted to the datasets in
Figure~\ref{fig:data}\subref{subfig:data:spanish}-\subref{subfig:data:termites}.  
In Section~\ref{sec:inhom}, we discuss inference for inhomogeneous DPPs and we fit  parametric DPP
models to the datasets in
Figure~\ref{fig:data}\subref{subfig:data:Japanese}-\subref{subfig:data:mucosa}.
Section~\ref{sec:conremarks} contains our
concluding remarks. Finally,
Appendices~\ref{sec:trans-thin}-\ref{sec:approx3} contain the technical
proofs of our results
and provide supplementary
methods, examples, and remarks to the ones presented in the main text.

\section{Definition, existence, simulation, and densities
  for determinantal point processes}\label{sec:background}

The following provides the background material needed in this paper on
 the
definition (Sections~\ref{sec:mom}-\ref{sec:defdet}),
 existence (Section~\ref{sec:exist}), simulation (Section~\ref{sec:sim}), and
density expression for
 a general DPP
defined on a Borel set
$B\subseteq\real^d$ (Section~\ref{sec:densities}).
We shall mainly consider the cases $B=\real^d$ and
 $B=S$, where $S$ is compact.
We aim at a simple exposition, though
it is
unavoidable at some places to be a bit technical.

We denote by $X$ a simple
locally finite spatial point process defined on $B$,
i.e.\ we can view realizations of $X$ as
locally finite subsets of $B$
(for
measure theoretical details, see e.g.\ \cite{moeller:waagepetersen:00}
and the references therein). We refer to
the elements (or points) of $X$ as
events.

\subsection{Moments for spatial point processes}\label{sec:mom}

Since DPPs are defined in terms of their moment properties as expressed
by their so-called product density functions,
$\rho^{(n)}:B^{n}\to[0,\infty)$, $n=1,2,\ldots$,
 we start by
recalling this notion. 

Intuitively, for any pairwise distinct points $x_1,\ldots,x_n\in B$,
 $\rho^{(n)}(x_1,\ldots,x_n)\,\mathrm
dx_1\cdots\mathrm dx_n$ is the probability that  for each
$i=1,\cdots,n$, $X$ has a point in an infinitesimally
small region around $x_i$ of volume $\mathrm dx_i$.
Formally, $X$ has $n$'th order
product density function
$\rho^{(n)}:B^{n}\to[0,\infty)$ if this function is locally integrable (with
respect to Lebesgue measure restricted to $B^n$) and
for any Borel function $h:B^{n}\to[0,\infty)$,
\begin{equation}\label{e:def1}
\mean\sum^{\not=}_{x_1,\ldots,x_n\in
  X}h(x_1,\ldots,x_n)=\int_B\cdots\int_B\rho^{(n)}(x_1,\ldots,x_n)h(x_1,\ldots,x_n)\,\mathrm
dx_1\cdots\mathrm dx_n
\end{equation}
where $\not=$ over the summation sign means that $x_1,\ldots,x_n$ are
pairwise distinct events. See e.g.\ \cite{stoyan:kendall:mecke:95}.
Clearly, $\rho^{(n)}$ is only uniquely defined up to a
Lebesgue nullset.
We shall henceforth
require that
 $\rho^{(n)}(x_1,\ldots,x_n)=0$ if
$x_i=x_j$ for some $i\not=j$. This convention becomes consistent with
Definition~\ref{def1} below.

In particular, $\rho=\rho^{(1)}$ is the
intensity function and $g(x,y)=\rho^{(2)}(x,y)/[\rho(x)\rho(y)]$
 is the pair correlation function, where we set $g(x,y)=0$ if $\rho(x)$
 or $\rho(y)$ is zero. By our convention above,
 $g(x,x)=0$ for all $x\in B$.
The terminology
`pair correlation function' may be confusing, but it is adapted from
physics and commonly used
by spatial statisticians. 
For a Poisson point
process with an intensity function $\rho$, and for $x\not= y$,
we have 
$g(x,y)=1$ if $\rho(x)>0$
and $\rho(y)>0$. 

\subsection{Definition}\label{sec:defdet}

We need the following notation.
Let $\complex$ denote the complex plane. For a complex number
$z=z_1+\mathrm{i}z_2$ (where $z_1,z_2\in\real$ and
$\mathrm{i}=\sqrt{-1}$), denote $\conj{z}=z_1-\mathrm{i}z_2$ the
complex conjugate and $|z|=\sqrt{z_1^2+z_2^2}$ the modulus.
For a square complex matrix $A$, denote $\det A$ its determinant.
For any function $C:B\times B \to
\complex$, let
$[C](x_1,\dots,x_n)$ be the $n\times n$ matrix with $(i,j)$'th
entry $C(x_i,x_j)$. We refer to $C$ as a kernel. In most
examples of applications, the kernel will be real (the Ginibre DPP is
an exception). 
\begin{definition}\label{def1}
Suppose that a simple locally finite spatial point process $X$ on $B$ has
product density functions 
\begin{equation}\label{eq:productIntensity}
  \rho^{(n)}(x_1,\dots,x_n) = \det[C](x_1,\dots,x_n),\quad
  (x_1,\dots,x_n)\in B^{n},\quad n=1,2,\ldots.
\end{equation}
Then $X$ is called a
\emph{determinantal point process (DPP)} with kernel $C$, and we write
$X\sim\detproc_B(C)$.
\end{definition}

\begin{remark}\label{rem:DPPonS}
For $X\sim\detproc_B(C)$ and any Borel set $A\subseteq B$, define
  $X_A=X\cap A$ and denote its distribution by $\detproc_B(C;A)$. We
  also write $\detproc_A(C)$ for the distribution of the DPP on $A$ with kernel given by the restriction of $C$ to
  $A\times A$. Then property (c) in Section~\ref{e:vs}
follows
  directly from Definition~\ref{def1}, i.e.\
  $\detproc_A(C)=\detproc_B(C;A)$.
Further, when $B=\real^d$, we
  write $\detproc(C)$ for $\detproc_{\real^d}(C)$, and
  $\detproc(C;A)$ for $\detproc_{\real^d}(C;A)$.
\end{remark}

Some further remarks are in order. 

A Poisson process is the special
case where $C(x,y)=0$
whenever $x\not=y$.

Note that 
$C:\real^d\times\real^d\rightarrow\complex$ needs to be 
non-negative definite to ensure $\rho^{(n)}\geq
0$ in \eqref{eq:productIntensity}. Thus $C$ is
a complex  covariance function if and only if it is Hermitian,
 i.e.\ $C(x,y)=\conj{C(y,x)}$ for all
  $x,y\in\real^d$. 

Suppose $X\sim\detproc(C)$. 
Then there is no other point process
satisfying \eqref{eq:productIntensity}
(Lemma~4.2.6 in \cite{Hough:etal:09}).

By \eqref{eq:productIntensity}, the intensity function 
is
\begin{equation}\label{intensity}
  \rho(x) = C(x,x)
,\quad
  x\in \real^d,
\end{equation}
and the pair correlation function of $X$ is
\[g(x,y)=1-\frac{C(x,y)C(y,x)}{C(x,x)C(y,y)}\quad\mbox{if $C(x,x)>0$
  and $C(y,y)>0$}\]
while it is zero otherwise. 

Repulsiveness of the DPP is reflected by the following.
If $C$ is Hermitian, then $g\le1$ and for any $n=2,3,\ldots$, 
\[\rho^{(n)}(x_1,\ldots,x_n)\le\rho(x_1)\cdots\rho(x_n)\]
(the inequality is in general sharp and follows from the fact
that the determinant of a complex covariance matrix is less than or equal to
the product of its diagonal elements). Furthermore, if $C$ is continuous,
$\rho^{(n)}$ is also continuous and $\rho^{(n)}(x_1,\dots,x_n)$ tends
to zero as the Euclidean distance $\|x_i-x_j\|$ goes to zero for some $i\neq
j$, cf.\ \eqref{eq:productIntensity}. 

For later use,
let $R(x,y)=C(x,y)/[C(x,x)C(y,y)]^{1/2}$, where we set $R(x,y)=0$ if
$C(x,x)=0$ or $C(y,y)=0$. Note that 
\begin{equation}\label{eq:pcf}
  g(x,y) = 1-|R(x,y)|^2, \quad x,y\in\real^d,
\end{equation}
and when $C$ is a covariance
function, $R$ is its corresponding correlation function.

\subsection{Existence}\label{sec:exist}

Existence of a DPP
on $\real^d$ is ensured by the following
assumptions (C1)-(C2) on $C$.

First, as argued below, we find it natural to assume the
  following condition:
\begin{align*}
 &\mbox{(C1)}\qquad \mbox{$C$ is a continuous complex
  covariance function.}
\end{align*}
Then, if we
let $S\subset\real^d$ denote a generic compact set and  $L^2(S)$ the space
of square-integrable functions $h:S\to\complex$, we obtain the following
by Mercer's theorem
(see e.g.\ Section~98 in
\cite{riesz:nagy}). Under \mbox{(C1)},
$C$ restricted to $S\times S$ has a spectral representation,
\begin{equation}\label{eq:eigenrep}
  C(x,y) = \sum_{k=1}^\infty \lambda_k {\phi_k(x)}
  \conj{\phi_k(y)},\quad (x,y)\in S\times S,
\end{equation}
with absolute and uniform convergence of the
series, and where
\begin{itemize}
\item the set of eigenvalues $\{\lambda_k\}$
is unique, each non-zero eigenvalue is real and
has finite
multiplicity,
and the only possible accumulation point of the eigenvalues is $0$;
\item the eigenfunctions $\{\phi_k\}$ form an orthonormal basis of
$L^2(S)$, i.e.\
\begin{equation}
\int_S {\phi_k(x)}\conj{\phi_l(x)}\dee x = \left\{
  \begin{tabular}{ll}
    1 & if $k=l$,  \\
    0 & if $k\neq l$,
  \end{tabular}\right. \label{e:spectralrep}
\end{equation}
and any $h\in L^2(S)$ can be written as
$h=\sum_{k=1}^\infty\alpha_k\phi_k$, where $\alpha_k\in\complex$, $k=1,2,\ldots$.
Moreover, $\phi_k$ is continuous if $\lambda_k\not=0$.
\end{itemize}
When we need to stress that the
eigenvalue $\lambda_k$ depends on $S$, we
write $\lambda_k^S$.
Second, we consider the following condition:
\begin{align*}
& \mbox{(C2)}\qquad
\mbox{$\lambda_k^S\le1$ for all compact $S\subset\real^d$ and all $k$.}
\end{align*}
The following result is verified in Appendix~\ref{sec:existproof}.

\begin{theorem}\label{thm:existence}
Under (C1), existence of $\detproc(C)$ is equivalent to (C2).
\end{theorem}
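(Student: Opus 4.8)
The plan is to localise the problem to compact sets and then glue. For a fixed compact $S\subset\real^d$, the restriction property~(c) forces any realisation of $\detproc(C)$ to induce a DPP on $S$ with kernel $C$ restricted to $S\times S$; conversely, a point process on $\real^d$ can be reconstructed from a consistent family of its restrictions to compacts. The backbone I would use is the classical existence criterion on a \emph{compact} set (\cite{Macchi:75,Soshnikov:00}, and \cite{Hough:etal:09}): under (C1), the process $\detproc_S(C)$ exists if and only if every eigenvalue $\lambda_k^S$ in the Mercer expansion~\eqref{eq:eigenrep} lies in $[0,1]$. Because $C$ is a covariance function, the integral operator it induces on $L^2(S)$ is non-negative definite, so automatically $\lambda_k^S\ge0$; hence the only binding requirement is $\lambda_k^S\le1$, which is exactly (C2) read off on $S$.

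For the necessity direction I would assume $\detproc(C)$ exists. By property~(c) its restriction to an arbitrary compact $S$ is $\detproc_S(C)$, a genuine point process, and the necessity half of the compact criterion then yields $\lambda_k^S\le1$ for every $k$; since $S$ is arbitrary this is precisely (C2). The content of that step is a positivity statement: for a determinantal process on $S$ the avoidance functionals $\det\bigl(I-t\,\mathcal C_S\bigr)=\prod_k(1-t\lambda_k^S)$, with $\mathcal C_S$ the integral operator of $C$ on $L^2(S)$ and $t\in[0,1]$, must stay non-negative, and more generally the Fredholm determinants $\det\bigl(I-M_{\sqrt u}\,\mathcal C_S\,M_{\sqrt u}\bigr)$ for all measurable $0\le u\le1$ must be valid probabilities; an eigenvalue exceeding $1$ violates this, forcing $\lambda_k^S\le1$.

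For the sufficiency direction I would assume (C2). For each compact $S$ the eigenvalues then lie in $[0,1]$, so $\detproc_S(C)$ exists; concretely I would realise it as a mixture of determinantal projection processes, drawing independent $\mathrm{Bernoulli}(\lambda_k^S)$ indicators to select eigenfunctions $\phi_k$ and using the resulting finite-rank projection kernel, whose average reproduces $C$ on $S\times S$. These laws are consistent: for $S\subset S'$ the restriction of $\detproc_{S'}(C)$ to $S$ has product densities $\det[C](x_1,\dots,x_n)$ for $C$ restricted to $S\times S$, hence equals $\detproc_S(C)$ by the uniqueness of a DPP from its kernel (Lemma~4.2.6 in \cite{Hough:etal:09}). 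A Kolmogorov-type extension theorem for point processes on $\real^d$ then yields a process $X$ whose restriction to every compact $S$ is $\detproc_S(C)$; since its product densities are the determinants~\eqref{eq:productIntensity}, we get $X\sim\detproc(C)$, so the process exists.

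I expect the compact-set necessity---that no eigenvalue can exceed $1$---to be the main obstacle, since it is the only place where genuine probabilistic positivity, rather than formal manipulation of determinants, is required, and the scaling test functions alone do not settle eigenvalues of even multiplicity, so the full family of test functions $u$ is needed. By comparison the gluing step is comparatively routine, resting only on property~(c) together with the uniqueness of a DPP given its kernel.
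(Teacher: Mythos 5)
Your proposal is correct, but it is organized differently from the paper's proof. The paper's proof is a direct appeal to Theorem~4.5.5 in \cite{Hough:etal:09}, which is already a \emph{global} statement: for a Hermitian, locally square integrable, locally trace class kernel, existence of $\detproc(C)$ on all of $\real^d$ is equivalent to $0\le\lambda_k^S\le1$ for every compact $S$ and every $k$. Consequently the paper never needs your gluing step; the entire content of its proof is the verification that (C1) supplies that theorem's hypotheses --- continuity removes the Lebesgue nullset on which \eqref{eq:eigenrep} could otherwise fail, gives local square integrability, and (combined with non-negative definiteness, which forces all $\lambda_k^S\ge0$) yields the local trace class property via $\sum_k\lambda_k^S=\int_S C(x,x)\,\mathrm dx<\infty$ --- so that (C2) alone remains as the binding condition. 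You instead take the compact-set criterion as the backbone and globalize it yourself: necessity via the restriction property and positivity of the thinned Fredholm determinants $\det\bigl(I-M_{\sqrt u}\,\mathcal C_S\,M_{\sqrt u}\bigr)$ (correctly noting that scalar thinnings $u\equiv t$ cannot detect eigenvalues of even multiplicity), and sufficiency via the Bernoulli mixture of projection processes on each compact, consistency of these laws from uniqueness (Lemma~4.2.6 in \cite{Hough:etal:09}), and a Kolmogorov-type extension. What your route buys is probabilistic self-containedness --- you essentially re-derive the global Macchi--Soshnikov statement rather than quoting it --- at the cost of an extension argument the paper avoids entirely; what the paper's route buys is brevity, and it makes explicit the analytic regularity (trace class, almost-everywhere versus everywhere validity of the Mercer expansion) that your appeal to the ``classical criterion under (C1)'' leaves implicit, although under (C1) that regularity is indeed automatic.
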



\begin{assumption} In the remainder of this paper, $X\sim\detproc(C)$
  with $C$ satisfying the
  conditions (C1) and (C2).
\end{assumption}

Various comments on (C1) and (C2) are in order.

Usually, for statistical models of covariance functions, (C1) is satisfied,
and so (C2) becomes the essential
condition. As discussed in Section~\ref{sec:approx1}, (C2) simplifies in the
stationary case of $X$. It seems hard to provide an intuition 
why the eigenvalues need to be bounded by one, but they appear as probabilities for the simulation
algorithm in Section~\ref{sec:sim}.

As noticed in \cite{Hough:etal:09}, there are interesting examples of
DPPs with non-Hermitian kernels, but they do not
possess various general properties, and the results and methods in
our paper rely much on the spectral representation
\eqref{eq:eigenrep}. We therefore confine ourselves
 to the Hermitian case of $C$.

We find that (C1) is often a natural condition for several
reasons: statisticians are used to deal with
  covariance functions;
as seen in the proof of
Theorem~\ref{thm:existence},
the situation simplifies when $C$ is assumed to be continuous;
  continuity of $C$ implies continuity of the intensity
  function and the pair correlation function; conversely, if $C$ is
  real and non-negative, continuity of $\rho$ and $g$ implies
  continuity of $C$.

When we are only
  interested in considering a DPP $Y$ on a
  given compact set $S\subset\mathbb R^d$, then (C1)-(C2) can be
  replaced by the assumption that C is a continuous complex covariance
  function defined on $S\times S$ such that $\lambda_k^S\le 1$ for all
  $k$. The results in Sections~\ref{sec:sim}-\ref{sec:densities} are
  then valid for $Y$, even if there is no continuous
  extension of $C$ to $\mathbb R^d\times\mathbb R^d$ which satisfies
  (C1)-(C2). However, it is convenient to assume (C1)-(C2) as we in
  Sections~\ref{sec:stationary}-\ref{sec:hom} consider stationary
  DPPs.

Though a Poisson process is determinantal from Definition \ref{def1}, it
is excluded by our approach where $C$ is continuous. In
particular, \eqref{eq:eigenrep} does not hold for a Poisson process 
(therefore many of our results as well as those 
established in \cite{Hough:etal:09} do not
hold for a Poisson process).




\subsection{Simulation}\label{sec:sim}

  An algorithm for simulating a finite DPP in a very general setup
is provided in \cite{Hough:etal:06}.
There are special cases of DPPs
which may be simulated in a different manner, e.g.\ the Ginibre ensemble,
see Section 4.3 in \cite{Hough:etal:09}.

We
explain and prove the simulation
algorithm of \cite{Hough:etal:06} in the specific case where we want to
simulate $X_S\sim\detproc(C;S)$ with $S\subset\real^d$ compact.
Our
implementation of the algorithm becomes more efficient than the one
in \cite{Scardicchio:etal:09}, and
our description and proof
use
mainly linear algebra and are less technical than that in
\cite{Hough:etal:06}. We notice the following definition.

\begin{definition}\label{def:projection}
Let $S\subset \real^d$ be compact and assume
all non-zero eigenvalues $\lambda_k^S$ are
one. Then $C$ restricted to $S\times S$ is called a projection kernel,
and $X_S$ is
 called a {\em determinantal
  projection point process}.
\end{definition}

The terminology in Definition~\ref{def:projection}
seems commonly used
(e.g.\ \cite{Hough:etal:06} and \cite{Hough:etal:09});
\cite{McCullagh:Moeller:06} call a determinantal
  projection point process a special DPP because of its special
  properties as discussed below.
Now, consider the spectral representation
\eqref{eq:eigenrep} of
$C$ restricted to $S\times S$.
The simulation algorithm is based on the following result (Theorem~7 in \citet{Hough:etal:06}; see also
Theorem~4.5.3 in \cite{Hough:etal:09}).

\begin{theorem}\label{thm:simulation}
  For $k=1,2,\dots$, let $B_k$ be independent Bernoulli variables with
  mean $\lambda_k$. Define the random projection kernel
  $K:S\times S \to \complex$ by
  \begin{equation}\label{eq:projectionKernelInfty}
    K(x,y) = \sum_{k=1}^\infty B_k {\phi_k(x)} \conj{\phi_k(y)}.
  \end{equation}
  Then
  \begin{equation}\label{e:imp}
    \detproc_S(K)\sim\detproc(C;S).
  \end{equation}
\end{theorem}

In other words, if we first generate the independent Bernoulli
  variables, and second generate a determinantal projection point
  process on $S$ with kernel $K$, then the resulting point process
  follows $\detproc(C;S)$. Note that if $N(S)=n(X_S)$ denotes
the number of events in $S$, then
\begin{equation}\label{e:count}
N(S) \sim \sum_{k=1}^\infty B_k,\quad \mean[N(S)] = \sum_{k=1}^\infty \lambda_k,\quad \var[N(S)]
  =\sum_{k=1}^\infty \lambda_k(1-\lambda_k).
\end{equation}
The first result in \eqref{e:count} follows from \eqref{e:imp} and
Theorem~\ref{thm:algorithmWorks} below (or from Lemma~4.4.1 in
\cite{Hough:etal:09}), and the first result immediately implies the
two other results.

\subsubsection{Simulation of Bernoulli variables}\label{sec:Bern}

This and the following section descibe a two step simulation procedure based
on Theorem~\ref{thm:simulation}.

Recall that $\prob(B_k=1)=1-\prob(B_k=0)=\lambda_k$, $k=1,2,\ldots$, and
define
$B_0=\lambda_0=1$. With probability one, $\sum
B_k<\infty$, since $\sum\lambda_k=\int_SC(x,x)\,\mathrm dx<\infty$ as
$S$ is bounded and $C$ is continuous.
Consequently, with probability one, the random variable
$M=\max\{k\ge0:B_k\neq0\}$ is finite.
For any integer $m>0$, it is easily
verified that $B_0,\ldots,B_{m-1}$ are independent of the
event $\{M=m\}$. Therefore the strategy is first to generate a
realization $m$ of $M$, second independently generate
realizations of the Bernoulli variables $B_k$ for $k=1,\ldots, m-1$
(if $m=0$ we do nothing), and third
set $B_m=1$ and $B_k=0$ for $k=m+1,m+2,\ldots$.
Simulation of these Bernoulli variables is
of course easily done. For simulation of $M$, we use the inversion
method described in Appendix~\ref{sec:inversionM}.

\subsubsection{Simulation of  determinantal projection point process}
\label{sec:simproc}

Suppose we have generated a realization of the Bernoulli variables
$B_k$ as
described in Section~\ref{sec:Bern} and we now want to generate a
realization from $\detproc_S(K)$ with $K$ given by
\eqref{eq:projectionKernelInfty}.

Let $n=\sum_{k=1}^\infty B_k$ denote
the number of non-zero $B_k$'s with $k\ge1$ (as foreshadowed in
connection to \eqref{e:count}, $n$ can be considered as a realization of the
count $N(S)$).
If $n=0$, then $K=0$ and
a realization from $\detproc_S(K)$
 is simply equal to the empty point configuration.
 Assume that $n>0$ and 
without
loss of generality that
\begin{equation}\label{eq:projectionKernelN}
  K(x,y) = \sum_{k=1}^n {\phi_k(x)}\conj{\phi_k(y)} = \bv(y)^*\bv(x)
\end{equation}
where $\bv(x)=
(\phi_1(x),\dots,\phi_n(x))^T$, and where
$^T$ and $^*$ denote the transpose and conjugate transpose
of a vector or a matrix. For $n$-dimensional complex column vectors
such as $\bv(x)$ and $\bv(y)$, we consider their usual inner product
$\langle\bv(x),\bv(y)\rangle=\bv(y)^*\bv(x)$.

\begin{algorithm}[H]
\caption{Simulation of determinantal projection point process}
\label{alg:sim}
\begin{algorithmic}
  \STATE {\bf sample} $X_n$ from the distribution with density
  $p_n(x)=\|\bv(x)\|^2/n$, $x\in S$
  \STATE {\bf set} $\be_1=\bv(X_n)/\|\bv(X_n)\|$
  \FOR{$i=(n-1)$ to $1$}
\STATE {\bf sample} $X_i$ from the distribution with density
   \begin{equation}\label{eq:p_i}p_i(x)=\frac{1}{i}\left[\|\bv(x)\|^2-\sum_{j=1}^{n-i}|\be_j^*\bv(x)|^2\right],\quad x\in S \end{equation}
  \STATE {\bf set} $\bw_i=\bv(X_i)-\sum_{j=1}^{n-i}\left(\be_j^*\bv(X_i)\right)\be_j$, $\be_{n-i+1}=\bw_i/\|\bw_i\|$
  \ENDFOR
  \STATE {\bf return} $\{X_1,\dots,X_n\}$
\end{algorithmic}
\end{algorithm}

The following theorem is proved in Appendix~\ref{sec:algproof}. 
It follows from the proof that
with probability one, $p_i(x)$ is a density, where we are
conditioning on $(X_n,\dots,X_{i+1})$ if $i<n$.

\begin{theorem}\label{thm:algorithmWorks} If $n>0$ and $K(x,y) = \sum_{k=1}^n
{\phi_k(x)}\conj{\phi_k(y)}$ for all $x,y\in S$, then
  $\{X_1,\dots,X_n\}$ generated by Algorithm~\ref{alg:sim}
is distributed as $\detproc_S(K)$.
\end{theorem}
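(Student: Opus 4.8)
The plan is to run Algorithm~\ref{alg:sim} as a sequential sampler, compute the joint density of the ordered tuple $(X_n,X_{n-1},\dots,X_1)$ it produces, show this density equals $\tfrac{1}{n!}\det[K](x_1,\dots,x_n)$, and then match it against the $n$th order product density of $\detproc_S(K)$. First I would pin down the target. Since $K$ is a projection kernel of rank $n$, its eigenvalues lie in $\{0,1\}$ with exactly $n$ of them equal to $1$, so \eqref{e:count} gives $\mean[N(S)]=n$ and $\var[N(S)]=0$; hence $\detproc_S(K)$ has exactly $n$ events almost surely. A simple point process with a.s.\ $n$ events is determined by the (symmetric) exchangeable joint density of those events, which equals $\tfrac{1}{n!}\rho^{(n)}=\tfrac{1}{n!}\det[K]$ by Definition~\ref{def1}. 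Thus it suffices to show that the output of Algorithm~\ref{alg:sim}, which also has exactly $n$ events by construction, has $n$th product density $\det[K]$.

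The core is a linear-algebra identity. Writing $\bv(x)=(\phi_1(x),\dots,\phi_n(x))^T$, we have $K(x_i,x_j)=\bv(x_j)^*\bv(x_i)=\inner{\bv(x_i)}{\bv(x_j)}$ by \eqref{eq:projectionKernelN}, so $[K](x_1,\dots,x_n)$ is the Gram matrix of $\bv(x_1),\dots,\bv(x_n)$ and $\det[K](x_1,\dots,x_n)$ is their squared spanned volume. I would verify by induction that, at the instant $X_i$ is sampled, the vectors $\be_1,\dots,\be_{n-i}$ built by the algorithm form an orthonormal basis of $\spanspace{\bv(X_n),\dots,\bv(X_{i+1})}$; this is exactly Gram--Schmidt orthonormalization of the already-accepted points, and it proceeds almost surely because the freshly sampled $\bv(X_i)$ a.s.\ avoids the Lebesgue-null set on which it falls into the previous span, so $\bw_i\neq0$. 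Consequently the bracket in \eqref{eq:p_i} is the squared norm of the component of $\bv(x)$ orthogonal to that span, and the base-times-height factorization of a Gram determinant yields $\det[K](x_1,\dots,x_n)=\prod_{i=1}^{n}\big(\|\bv(x_i)\|^2-\sum_{j=1}^{n-i}|\be_j^*\bv(x_i)|^2\big)$, using that the Gram determinant is invariant under the ordering of the $\bv(x_i)$.

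Assembling the conditionals then finishes the computation. The joint density of $(X_n,\dots,X_1)$ is the telescoping product $p_n(x_n)\prod_{i=1}^{n-1}p_i(x_i)$; each $p_i$ contributes a prefactor $1/i$ and the matching squared-norm bracket, so the prefactors multiply to $1/n!$ and the brackets multiply to $\det[K](x_1,\dots,x_n)$ by the previous step. Hence this joint density is $q(x_n,\dots,x_1)=\tfrac{1}{n!}\det[K](x_1,\dots,x_n)$, which is symmetric in its arguments. Because $q$ is symmetric it is precisely the exchangeable joint density of the $n$ unordered output points, so their $n$th product density is $n!\,q=\det[K]$; together with the a.s.\ count being $n$, this matches $\detproc_S(K)$ and proves the claim. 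Along the way one must check that each $p_i$ is a genuine density: nonnegativity is immediate since the bracket is a squared norm, and for the normalization orthonormality of $\{\phi_k\}$ gives $\int_S\|\bv(x)\|^2\dee x=n$ and $\int_S|\be_j^*\bv(x)|^2\dee x=\|\be_j\|^2=1$, whence $\int_S p_i=\tfrac1i\big(n-(n-i)\big)=1$.

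The step I expect to be the main obstacle is the combination of the base-times-height identity with the bookkeeping that the algorithm's $\be_j$ really do realize the Gram--Schmidt orthonormalization of the previously accepted points (including the a.s.\ nondegeneracy $\bw_i\neq0$). Once that is established, the $1/i$ prefactors collapse to $1/n!$, the brackets collapse to $\det[K]$, and the identification of $\tfrac{1}{n!}\det[K]$ as the common exchangeable joint density is routine.
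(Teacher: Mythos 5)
Your proof is correct and follows essentially the same route as the paper's proof (Appendix~E): run Algorithm~\ref{alg:sim} as a sequential sampler, show by induction that each $p_i$ is a genuine density and that the accepted vectors $\bv(X_n),\dots,\bv(X_i)$ stay linearly independent, telescope the conditionals so the prefactors give $1/n!$ and the brackets in \eqref{eq:p_i} multiply to the Gram determinant $\det[K](x_1,\dots,x_n)$ (base-times-height), and conclude that the $n$'th product density of the output is $n!$ times the symmetric joint density, i.e.\ $\det[K]$. Where the paper phrases the induction with projection matrices $P_i$ and the spectral theorem, you work directly with the Gram--Schmidt vectors $\be_j$ and $\|\be_j\|=1$ — the same computation — and you are in fact more explicit than the paper about why matching $\rho^{(n)}$ suffices (both processes have exactly $n$ points almost surely, and such a law is determined by its exchangeable $n$-point density).

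Two small repairs are needed. First, the set $\{x\in S:\bv(x)\in\spanspace{\bv(X_n),\dots,\bv(X_{i+1})}\}$ need \emph{not} be Lebesgue-null: orthonormal eigenfunctions can vanish on sets of positive measure (e.g.\ continuous bumps with disjoint supports), so the a.s.\ nondegeneracy $\bw_i\neq0$ cannot be obtained from a null-set argument. It follows instead from an observation you already make: the bracket in \eqref{eq:p_i} is the squared norm of the component of $\bv(x)$ orthogonal to that span, so $p_i$ vanishes identically on this set, and $X_i$, being drawn from the density $p_i$, avoids it almost surely — this is exactly the paper's equation \eqref{eq:zerodensity}. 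Second, when you invoke \eqref{e:count} to get $N(S)=n$ a.s.\ under $\detproc_S(K)$, beware that the paper derives the first statement of \eqref{e:count} partly \emph{from} Theorem~\ref{thm:algorithmWorks}; to avoid circularity, take this fact from Lemma~4.4.1 of \cite{Hough:etal:09} (as the paper's parenthetical allows), or derive it directly from Definition~\ref{def1}: $\mean[N(S)]=\int_S K(x,x)\dee x=n$ and the reproducing property $\int_S|K(x,y)|^2\dee y=K(x,x)$ give $\var[N(S)]=0$.
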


To implement Algorithm~\ref{alg:sim} we need to sample from
  the densities $p_i$, $i=n,\ldots,1$. This may simply be done by
  rejection sampling with a uniform instrumental density and
  acceptance probability given by $p_i(x)/U$,
where $U$ is an upper bound on $p_i(x)$ for $x\in S$. We use the bound
$U=\sup_{y\in S}\|\bv(y)\|^2/i$, cf.\ \eqref{eq:p_i}, which simplifies
to $U=n/i$ for the Fourier
basis considered in Section~\ref{sec:approx2}.
Appendixes~\ref{sec:algproof}-\ref{sec:closeupper}
  discuss rejection sampling for
this and other choices of the instrumental distribution.

\subsection{Densities}\label{sec:densities}

This section briefly discusses the density expression for $X_S\sim\detproc(C;S)$
when $S\subset\real^d$ is compact. Recall that the eigenvalues
$\lambda_k=\lambda_k^S$ are assumed to be less than or equal to one.

In general, when some eigenvalues $\lambda_k$ are allowed
  to be one, the density of $X_S$ is not available.  But we can
  condition on the Bernoulli variables $B_k$ from
  Theorem~\ref{thm:simulation}, or just condition on $K(x,y)$ for all
  $x,y\in S$, to obtain the conditional density.
Note that the trace $\tr_S(K)=\int_S K(x,x)\dee x =
\sum_{k=1}^\infty B_k$ is almost surely finite. Conditional on $K$,
when $\tr_S(K)=n>0$, the ordered $n$-tuple of events of the
determinantal projection point process $X_S$ has density
\[p(x_1,\ldots,x_n)=\det[K](x_1,\ldots,x_n)/n!,\quad
(x_1,\ldots,x_n)\in S^n,\]
as verified in \eqref{e:p-den}. Moreover,
by Algorithm~1 and Theorem~\ref{thm:algorithmWorks},
\[p_n(x)=K(x,x)/n,\quad x\in S,\]
is the density for an arbitrary
selected event of $X_S$.
This is in agreement with the simple fact that
in the homogeneous
case, i.e.\ when the intensity $K(x,x)$ is constant on $S$,
any event of $X_S$ is uniformly distributed on $S$.

The most interesting case occurs when
  $\lambda_k<1$ for all $k=1,2,\ldots$, which means
that no $B_k$ is almost surely one. Then the  density
  of $X_S$ exists and is specified in Theorem~\ref{thm:density}
  below, where the following considerations and notation are used. If $P(N(S)=n)>0$, then
$P(N(S)=m)>0$ for $m=0,\ldots,n$, cf.\ \eqref{e:count}. Thus
\[\prob(N(S)=0)=\prod_{k=1}^\infty(1-\lambda_k)\]
is strictly positive, and we can
define
\begin{equation}\label{e:D-def}
  D = -\log\prob(N(S)=0)= - \sum_{k=1}^\infty
  \log(1-\lambda_k).
\end{equation}
Further, define
$\Ct:S\times S \to \complex$ by
\begin{equation}\label{e:defCt}
  \Ct(x,y) = \sum_{k=1}^\infty \lambdat_k {\phi_k(x)}
  \conj{\phi_k(y)}
\end{equation}
where
\begin{equation*}
  \lambdat_k={\lambda_k}/{(1-\lambda_k)}, \quad
  k=1,2,\dots.
\end{equation*}
Let $|S|=\int_S\mathrm d x$, and set $\det
  [\Ct](x_1,\ldots,x_n)=1$ if $n=0$. Then we have the following
  result, cf.\
  Appendix~\ref{sec:proofdensitythm}.

\begin{theorem}\label{thm:density} Assuming $\lambda_k<1$,
  $k=1,2,\ldots$, then $X_S$ is absolutely continuous
with respect to the homogeneous Poisson process on $S$
with unit intensity, and has
density
\begin{equation}\label{e:unconddensity}
  f(\{x_1,\ldots,x_n\}) = \exp({|S|-D}) \det
  [\Ct](x_1,\ldots,x_n)
\end{equation}
for all $(x_1,\ldots,x_n)\in S^n$ and $n=0,1,\ldots$.
\end{theorem}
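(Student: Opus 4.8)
The plan is to leverage the mixture representation supplied by Theorem~\ref{thm:simulation}: conditional on the Bernoulli variables $\{B_k\}$, the process $X_S$ is the determinantal projection point process with kernel $K=\sum_k B_k\phi_k\conj{\phi_k}$, so the law of $X_S$ is a mixture over $\{B_k\}$ of projection processes and its density with respect to the unit-rate homogeneous Poisson process $\Pi$ on $S$ is the corresponding mixture of conditional densities. I would first fix the meaning of a density: a point process $Y$ on $S$ has density $f$ with respect to $\Pi$ if $\mean[h(Y)]=e^{-|S|}\sum_{n\ge0}\tfrac1{n!}\int_{S^n}h(\{x_1,\dots,x_n\})f(\{x_1,\dots,x_n\})\,\dee x_1\cdots\dee x_n$ for every nonnegative functional $h$.

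\emph{Conditional density.} Fix a realization of $\{B_k\}$ with $\sum_kB_k=n$. By the discussion preceding the theorem and Theorem~\ref{thm:algorithmWorks}, the projection process has exactly $n$ events and, on configurations of cardinality $n$, symmetric joint density $\det[K](x_1,\dots,x_n)/n!$ with respect to Lebesgue measure on $S^n$ (this is a genuine probability density since $\int_{S^n}\det[K]=n!$ for a rank-$n$ projection), while it assigns no mass to configurations of any other cardinality. Matching this against the definition above identifies the conditional density $f_K(\{x_1,\dots,x_n\})=e^{|S|}\det[K](x_1,\dots,x_n)$, with $f_K\equiv0$ on configurations whose size differs from $\sum_kB_k$.

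\emph{Mixing and Cauchy--Binet bookkeeping.} Taking the expectation over $\{B_k\}$, only realizations with $\sum_kB_k=m$ contribute to an $m$-point configuration, so writing $T=\{k:B_k=1\}$ gives
\[
f(\{x_1,\dots,x_m\})=e^{|S|}\sum_{T:|T|=m}\Big(\prod_{k\in T}\lambda_k\prod_{k\notin T}(1-\lambda_k)\Big)\det\Big[\textstyle\sum_{k\in T}\phi_k\conj{\phi_k}\Big](x_1,\dots,x_m).
\]
Let $\Phi_T=(\phi_k(x_i))_{1\le i\le m,\,k\in T}$, an $m\times m$ matrix; then the inner determinant equals $\det(\Phi_T\Phi_T^*)=|\det\Phi_T|^2$. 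Factoring $\prod_{k\notin T}(1-\lambda_k)=e^{-D}\prod_{k\in T}(1-\lambda_k)^{-1}$ converts the coefficient into $e^{-D}\prod_{k\in T}\lambdat_k$, so that
\[
f(\{x_1,\dots,x_m\})=e^{|S|-D}\sum_{T:|T|=m}\Big(\prod_{k\in T}\lambdat_k\Big)|\det\Phi_T|^2.
\]
Applying the Cauchy--Binet formula to $[\Ct](x_1,\dots,x_m)=\Phi\,\mathrm{diag}(\lambdat_k)\,\Phi^*$ (equivalently $MM^*$ with $M=\Phi\,\mathrm{diag}(\lambdat_k)^{1/2}$) shows the right-hand sum is exactly $\det[\Ct](x_1,\dots,x_m)$, which yields \eqref{e:unconddensity}.

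The step I expect to demand the most care is the analytic justification of this otherwise formal computation over the infinite eigen-index: interchanging the expectation over $\{B_k\}$ with the (almost surely finite) sum over configurations, and expanding the fixed-size $m\times m$ determinant of the infinite series $\Ct=\sum_k\lambdat_k\phi_k\conj{\phi_k}$ term by term. Both reduce to dominated-convergence arguments fuelled by $\sum_k\lambdat_k<\infty$ (equivalently $D<\infty$), which holds because $\sum_k\lambda_k<\infty$ and $\lambda_k\to0$ force $\prod_k(1-\lambda_k)=e^{-D}>0$ and $\lambdat_k\le2\lambda_k$ for all large $k$; this makes $\Ct$ a bona fide continuous kernel with an absolutely convergent series, legitimising the multilinear expansion. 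As a consistency check I would verify that $f$ integrates to one against $\Pi$, which amounts to the Fredholm-type identity $\sum_{m\ge0}\tfrac1{m!}\int_{S^m}\det[\Ct]\,\dee x_1\cdots\dee x_m=\prod_k(1+\lambdat_k)=\prod_k(1-\lambda_k)^{-1}=e^{D}$, exactly cancelling the factor $e^{-D}$.
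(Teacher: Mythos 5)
Your proof is correct, and it rests on the same two pillars that the paper's own argument invokes: the mixture representation of Theorem~\ref{thm:simulation} (i.e.\ \eqref{e:imp}) and the projection-process density \eqref{e:density-n}. Where the two part ways is in how the computation is closed. The paper's proof in Appendix~\ref{sec:proofdensitythm} is only a sketch: it routes the bookkeeping through the identity $\rho^{(n)}(x_1,\ldots,x_n)=\mean f(Y\cup\{x_1,\ldots,x_n\})$ for $Y$ a unit-rate Poisson process---so in effect one verifies that the candidate density \eqref{e:unconddensity} reproduces the product densities $\det[C]$ and then appeals to uniqueness of the DPP---and it defers the ``longer but in principle straightforward calculation'' to \cite{Macchi:75}, \cite{shirai:takahash:03}, and \cite{McCullagh:Moeller:06}. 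You instead compute the density directly: identify the conditional density $e^{|S|}\det[K]$ given the Bernoulli variables, average over the realized index sets $T=\{k:B_k=1\}$, absorb $\prod_{k\notin T}(1-\lambda_k)$ into $e^{-D}\prod_{k\in T}\lambdat_k$, and resum with Cauchy--Binet to recover $\det[\Ct]$. This buys a self-contained derivation that needs neither the product-density identity nor the uniqueness lemma, and your Fredholm-type normalization check $\sum_m\frac{1}{m!}\int_{S^m}\det[\Ct]\,\dee x_1\cdots\dee x_m=e^{D}$ is a useful built-in consistency test. One small sharpening is needed in the analytic step: summability $\sum_k\lambdat_k<\infty$ alone does not give pointwise absolute convergence of $\sum_k\lambdat_k\phi_k(x)\conj{\phi_k(y)}$, since the eigenfunctions need not be uniformly bounded in $k$. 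Instead, combine your comparison $\lambdat_k\le c\,\lambda_k$ (valid with $c=1/(1-\sup_k\lambda_k)$, which is finite because all $\lambda_k<1$ and the eigenvalues accumulate only at $0$) with the absolute and uniform convergence of Mercer's expansion \eqref{eq:eigenrep} for $C$; by Cauchy--Schwarz this gives $\sum_k\lambdat_k|\phi_k(x)||\phi_k(y)|\le c\,\sqrt{C(x,x)C(y,y)}$, which is exactly what legitimises the term-by-term multilinear expansion and the interchange of expectation and summation that you flag as the delicate point.
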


Section~\ref{sec:approx2} and Appendix~\ref{sec:approx3} discuss
efficient ways of approximating $\Ct$ and $D$ when $X$ is stationary.

\section{Stationary models}\label{sec:stationary}

To the best of our knowledge,
parametric families of
DPP models have yet not been studied in the literature from a
statistical perspective. In the sequel we focus
on the stationary case of DPPs, discuss isotropy
(Section~\ref{sec:isotropyetc}),
give a simple condition for the
existence of a stationary DPP (Section~\ref{sec:approx1}),
construct various classes of parametric models
(Sections~\ref{sec:examples}-\ref{sec:spectral}).
 Inhomogeneous models of DPPs are discussed in Section~\ref{sec:inhom}.

Throughout this section,
$X\sim\detproc(C)$ where $C$ is of the form
\begin{equation}\label{stat-cov}
C(x,y)=C_0(x-y),\quad x,y\in\real^d.
\end{equation}
This condition  implies that  $X$ is stationary, i.e.\ its distribution is
invariant under translations. If $C$ is real, \eqref{stat-cov} is equivalent to the stationarity of $X$.

We also refer to $C_0$ as a covariance function.
Note that
$C_0(0)$, the variance corresponding to $C$,
equals $\rho$, the intensity  of $X$,
cf.\ \eqref{intensity}.

In light of Propositions~\ref{prop:trans}-\ref{prop:thinning},
 as inhomogeneous DPPs can be
obtained by transforming or thinning $X$, stationarity
is not a very restrictive
assumption. For example, by \eqref{e:C-trans},
if we transform $X$ by a one-to-one
continuous differentiable mapping $T$ such that its Jacobian matrix is
invertible, then $T(X)$ is a DPP with kernel
\begin{equation}\label{e:C-trans-again}
C_{{\mathrm{trans}}}(x,y)=|J_{T^{-1}}(x)|^{1/2}C_0(T^{-1}(x)-T^{-1}(y))|J_{T^{-1}}(y)|^{1/2}.
\end{equation}

\subsection{Isotropy}\label{sec:isotropyetc}

It is often convenient to require that $C_0$ is
 isotropic, meaning that $C_0(x)=\rho R_0(\|x\|)$ is invariant
 under rotations about the origin in $\real^d$. 
This is a natural simplification, since 
any stationary and an\-iso\-tro\-pic 
covariance function can be obtained from some stationary and isotropic
covariance function using some rotation followed by some rescaling, see e.g.\
\cite{goovaerts:97}.

Suppose $C_0$ is
 isotropic. Then $C_0$ is real, and the pair
 correlation function
depends only on the distance between pairs of points,
$g(x,y)=g_0(\|x-y\|)$, cf.\ \eqref{eq:pcf}. Hence commonly used
 statistical procedures
 based on the pair correlation function or the closely related
$K$-function apply (see
\cite{ripley:76,ripley:77} and \cite{moeller:waagepetersen:00}). In particular,
using the relation
\begin{equation}\label{e:r0}
|R_0(r)|=\sqrt{1- g_0(r)}
\end{equation}
we can define a `range of correlation', i.e.\ a distance $r_0>0$ such that
$g_0(r)\approx 1$ for $r\ge r_0$, as exemplified
later in \eqref{eq:ranges}. For many specific models for isotropic covariance,
including those studied in Section~\ref{sec:examples}, $R_0$ is a
decreasing function. By \eqref{e:r0} $g_0$ is then
 an increasing function from zero
to one. 

Examples of stationary and isotropic covariance
functions are studied in Sections~\ref{sec:examples}-\ref{sec:spectral}.
However, the following
Section~\ref{sec:approx1} does not involve an assumption of
isotropy, and the approximation of
$C_0$ studied in
Section~\ref{sec:approx2} is only approximately isotropic when $C_0$ is isotropic.

\subsection{A simple spectral condition for existence}\label{sec:approx1}

The following Proposition~\ref{prop:c1c2} simplifies condition
(C2). As it involves the spectral density for $C_0$, we 
start by recalling this and related notions.

For any number $p>0$ and Borel set $B\subseteq\real^d$,
let $L^p(B)$ be the class of $p$-integrable
 functions $h:B\to\complex$, i.e.\ $\int_B |h(x)|^p\,\mathrm
 dx<\infty$.
 Denote $\cdot$
the usual inner product in
$\real^d$.
For any Borel function $h:\real^d\to\complex$,
define the
Fourier transform $\mathcal F(h)$ of $h$ by
\[\mathcal F (h)(x)=
\int h(y)\mathrm{e}^{-2\pi \mathrm{i} x\cdot y}\,\mathrm dy,\quad x\in\real^d,\]
provided the integral exists,
 and the inverse Fourier transform $\mathcal F^{-1}(h)$  of $h$ by
\begin{equation*}\label{e:invfour}
\mathcal F^{-1}(h)(x)=
\int h(y)\mathrm{e}^{2\pi \mathrm{i} x\cdot y}\,\mathrm dy,\quad
x\in\real^d,
\end{equation*}
provided the integral exists. For instance, if $h\in L^1(\real^d)$, then $\mathcal
F(h)$ and $\mathcal F^{-1}(h)$ are well-defined.
Recall that $L^2(\real^d)$ is a Hilbert space with inner product
\[\langle h_1,h_2\rangle=\int h_1(x)\conj{h_2(x)}\,\mathrm dx\]
and the Fourier and inverse Fourier operators
initially defined on $L^1(\real^d)\cap L^2(\real^d)$ extend by
continuity to $\mathcal F:L^2(\real^d)\to L^2(\real^d)$ and
$\mathcal F^{-1}:L^2(\real^d)\to L^2(\real^d)$.  Furthermore,
these are unitary operators that
preserve the
inner
product, and $\mathcal F^{-1}$ is the inverse of $\mathcal F$.
See e.g.\ \cite{stein-weiss}.

By Khinchin's (or Bochner's) theorem, since $C_0$ is a continuous covariance
function, a spectral distribution function $F$
exists, i.e.\ $F$ defines a finite measure so that
\[C_0(x)=\int \mathrm{e}^{2\pi \mathrm{i} x\cdot y}\,\mathrm dF(y),\quad x\in\real^d.\]
If $F$ is differentiable, then the derivative $\varphi(x)=\mathrm
dF(x)/\mathrm dx$ is the
spectral density for $C_0$. In this case, $\varphi$ is non-negative, $\varphi\in
L^1(\real^d)$, and
$C_0=\mathcal F^{-1}(\varphi)$. On the other hand, if $C_0\in
L^1(\real^d)$ and $C_0$ is continuous (as assumed in this paper),
then the spectral density necessarily exists
(equivalently $F$ is differentiable),
$\varphi=\mathcal F(C_0)$, and $\varphi$ is
continuous and bounded.
See e.g.\ pages 331-332 in \cite{Yaglom:87}.

Alternatively, if $C_0\in L^2(\real^d)$ and $C_0$ is continuous,
the spectral density $\varphi$ also exists, since we can define
$\varphi=\mathcal F(C_0)$ in $L^2(\real^d)$ as explained above. In this case, $\varphi$ is
non-negative, belongs to $L^1(\real^d)\cap L^2(\real^d)$, but is not
necessarily continuous or bounded.
Note that if $C_0\in
L^1(\real^d)$, then $C_0\in
L^2(\real^d)$ by continuity of $C_0$.

The following is proved in Appendix~\ref{proofProp_spectral}.

\begin{proposition}\label{prop:c1c2}
Under (C1) and \eqref{stat-cov}, if $C_0\in L^2(\real^d)$, then
 (C2) is equivalent to that
\begin{equation}\label{e:C0-cond}
\varphi\le 1.
\end{equation}

\end{proposition}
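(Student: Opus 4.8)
The plan is to recast condition (C2) as a statement about the convolution operator associated with $C_0$ and to diagonalise that operator by the Fourier transform. For compact $S\subset\real^d$ the eigenvalues $\lambda_k^S$ are those of the operator $\mathcal C_S$ on $L^2(S)$ given by $(\mathcal C_S h)(x)=\int_S C_0(x-y)h(y)\dee y$; since $C_0$ is continuous and Hermitian and $S$ is compact, $\mathcal C_S$ is self-adjoint, positive semidefinite and Hilbert--Schmidt, so its largest eigenvalue $\lambda_{\max}^S$ equals the supremum of the Rayleigh quotient $\langle\mathcal C_S h,h\rangle/\|h\|^2$. Extending $h\in L^2(S)$ by zero to $\tilde h\in L^2(\real^d)$ supported in $S$, and using the convolution theorem $\mathcal F(C_0\star\tilde h)=\varphi\,\mathcal F(\tilde h)$ together with the fact that $\mathcal F$ is unitary, I would rewrite
\[\langle\mathcal C_S h,h\rangle=\int_{\real^d}\varphi(\xi)\,|\mathcal F(\tilde h)(\xi)|^2\dee\xi,\qquad \|h\|^2=\int_{\real^d}|\mathcal F(\tilde h)(\xi)|^2\dee\xi.\]
Since every compact $S$ with an $h\in L^2(S)$ corresponds exactly to a compactly supported $\tilde h\in L^2(\real^d)$ and conversely, condition (C2) is equivalent to requiring $\int\varphi|\mathcal F(\tilde h)|^2\le\int|\mathcal F(\tilde h)|^2$ for every compactly supported $\tilde h$, and the proposition reduces to showing that this family of inequalities holds if and only if $\varphi\le 1$ almost everywhere. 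Throughout I use that $\varphi=\mathcal F(C_0)$ is non-negative and lies in $L^1(\real^d)\cap L^2(\real^d)$, as established just before the proposition.

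The direction ``$\varphi\le1\Rightarrow$ (C2)'' is immediate: if $0\le\varphi\le1$ a.e., then $\int\varphi|\mathcal F(\tilde h)|^2\le\int|\mathcal F(\tilde h)|^2=\|h\|^2$ for every such $\tilde h$, so $\lambda_{\max}^S\le1$, and as $\mathcal C_S$ is positive semidefinite all its eigenvalues lie in $[0,1]$, which is (C2).

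The substantive direction is the converse, which I would prove by contraposition: assuming $\operatorname{ess\,sup}\varphi>1$, I construct a single compact $S$ and an $h$ violating (C2). Since $\{\varphi>m\}$ has positive measure for some $m>1$ and $\varphi<\infty$ a.e.\ (because $\varphi\in L^1$), I may fix $K<\infty$ and a bounded set $A_0\subseteq\{m<\varphi\le K\}$ of positive finite Lebesgue measure. The idea is to make $\mathcal F(\tilde h)$ concentrate on $A_0$, where $\varphi>m$. Concretely I approximate $\mathbf 1_{A_0}$ in $L^2(\real^d)$ by functions $u=\mathcal F(\tilde h)$ with $\tilde h$ compactly supported (these are dense, since $\mathcal F$ is unitary and compactly supported functions are dense). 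Writing the target form as $\int(\varphi-m)|u|^2$ and splitting $\real^d$ into $A_0$, $A_0^{c}\cap\{\varphi>m\}$, and $\{\varphi\le m\}$: the middle piece is non-negative and only helps; the last piece has modulus at most $m\int_{\{\varphi\le m\}}|u|^2\le m\|u-\mathbf 1_{A_0}\|_2^2$ since $\mathbf 1_{A_0}$ vanishes there, hence is negligible; and on $A_0$ the factor $\varphi-m$ is bounded by $K-m$, so the continuity of $u\mapsto|u|^2$ from $L^2$ into $L^1$ gives $\int_{A_0}(\varphi-m)|u|^2\to\int_{A_0}(\varphi-m)>0$. Thus for $u$ close enough to $\mathbf 1_{A_0}$ one has $\int\varphi|u|^2>m\|u\|^2>\|u\|^2$, and with $S\supseteq\operatorname{supp}\tilde h$ compact this yields $\lambda_{\max}^S>1$, contradicting (C2).

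I expect the main obstacle to be exactly this last construction, because the quadratic form $u\mapsto\int\varphi|u|^2$ need not be $L^2$-continuous when $\varphi$ is unbounded, so the approximation of $\mathbf 1_{A_0}$ must be arranged to keep its mass away from the region where $\varphi$ is large. Restricting the concentration set $A_0$ to where $\varphi$ is bounded, and exploiting both the favourable sign on $A_0^{c}\cap\{\varphi>m\}$ and the $L^2\!\to\!L^1$ continuity of the squared modulus, is precisely what makes the tail harmless. An essentially equivalent but more explicit route is to take $\tilde h(x)=\mathbf 1_{[-T/2,T/2]^d}(x)\,\mathrm e^{2\pi\mathrm i\xi_0\cdot x}$, whose Rayleigh quotient is the Fej\'er mean $(\varphi\star F_T)(\xi_0)$ and converges to $\varphi(\xi_0)$ at a Lebesgue point $\xi_0$ with $\varphi(\xi_0)>1$.
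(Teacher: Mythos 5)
Your proof is correct, and although it starts from the same place as the paper's own argument (Appendix H) --- identifying the $\lambda_k^S$ with the eigenvalues of the truncated convolution operator $h\mapsto (C_0\star \tilde h)\big|_S$ on $L^2(S)$ and passing to the Fourier side via Plancherel --- it finishes by a genuinely different route. The paper handles the crux in one stroke by citing operator theory: it asserts that, by unitarity of $\mathcal F$, the spectrum of the truncated operator equals the spectrum of the multiplication operator $Q_\varphi$, hence an essential image of $\varphi$ (Birman--Solomjak), from which (C2) $\Leftrightarrow\varphi\le1$ is read off. That identification is too strong as stated: the truncated operator is the compression $P_S\,\mathcal F^{-1}Q_\varphi\mathcal F\,P_S$, not a unitary conjugate of $Q_\varphi$; indeed it is compact (Hilbert--Schmidt, since $C_0$ is continuous and $S$ is compact), so its spectrum is a null sequence of eigenvalues and cannot literally coincide with an essential image. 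What is true, and what both arguments ultimately rest on, is the pair of inequalities you prove: a compression of a positive operator bounded by $I$ stays bounded by $I$ (your easy direction, via $\int\varphi\,|\mathcal F\tilde h|^2\le\int|\mathcal F\tilde h|^2$), and $\sup_S\lambda_{\max}^S\ge\operatorname{ess\,sup}\varphi$ (your hard direction). Your contraposition argument supplies the latter in a self-contained way, and the care you take --- confining $A_0$ to $\{m<\varphi\le K\}$ so that $\int_{A_0}(\varphi-m)\dee x$ is finite and positive, discarding the favourable set $\{\varphi>m\}\setminus A_0$ by positivity, and dominating the contribution of $\{\varphi\le m\}$ by $m\|u-\mathbf 1_{A_0}\|_2^2$ --- is exactly what is needed when $\varphi$ is unbounded; the Fej\'er-kernel variant in your final sentence is an equally valid explicit substitute for the density argument. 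So, compared with the paper, you trade a one-line citation for a half-page construction, but you gain an elementary argument that is rigorous precisely at the point where the paper's sketch is loosest.
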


\begin{assumption}\label{e:assumption}
Henceforth, in addition to (C1), we assume that $C_0\in L^2(\real^d)$
and that \eqref{e:C0-cond} holds.
\end{assumption}

The following corollary, verified in Appendix~\ref{proofCoro_spectral}, becomes useful in Section~\ref{sec:spectral}
 where we
discuss a spectral approach for constructing stationary DPPs.

\begin{corollary}\label{cor:first}
Under \eqref{stat-cov} the following two statements are equivalent.
\begin{enumerate}
\item[(i)] There exists $\varphi\in L^1(\real^d)$ with
  $0\le\varphi\le1$ and $C_0=\mathcal F^{-1}(\varphi)$.
\item[(ii)] Conditions (C1) and (C2) hold and $C_0\in L^2(\real^d)$.
\end{enumerate}
\end{corollary}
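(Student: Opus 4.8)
The plan is to derive the corollary as a fairly direct consequence of Proposition~\ref{prop:c1c2} together with the standard facts about the Fourier transform and Bochner's (Khinchin's) theorem recalled just above that proposition. The key observation is that Proposition~\ref{prop:c1c2} already supplies the bridge between condition (C2) and the pointwise bound $\varphi\le 1$ on the spectral density, once (C1) and $C_0\in L^2(\real^d)$ are in force; so the bulk of the work is only to match up the function $\varphi$ appearing in statement (i) with the spectral density of $C_0$, and to verify the remaining regularity claims. I would prove the two implications separately.

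For (ii) $\Rightarrow$ (i): assuming (C1), (C2) and $C_0\in L^2(\real^d)$, I would invoke the discussion preceding Proposition~\ref{prop:c1c2}. Since $C_0$ is continuous by (C1) and lies in $L^2(\real^d)$, the spectral density $\varphi=\mathcal F(C_0)$ exists, is non-negative, belongs to $L^1(\real^d)\cap L^2(\real^d)$, and satisfies $C_0=\mathcal F^{-1}(\varphi)$ because $\mathcal F^{-1}$ inverts $\mathcal F$ on $L^2(\real^d)$. Proposition~\ref{prop:c1c2} then turns (C2) into the bound $\varphi\le 1$, and combined with $\varphi\ge 0$ this gives $0\le\varphi\le 1$ with $\varphi\in L^1(\real^d)$, which is exactly (i).

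For (i) $\Rightarrow$ (ii): starting from some $\varphi\in L^1(\real^d)$ with $0\le\varphi\le 1$ and $C_0=\mathcal F^{-1}(\varphi)$, I would check the three conclusions in turn. Continuity of $C_0$ follows since the inverse Fourier transform of an $L^1(\real^d)$ function is continuous; non-negative definiteness follows from Bochner's theorem, because $\varphi\ge 0$ with $\varphi\in L^1(\real^d)$ makes $\varphi(y)\,\mathrm dy$ a finite non-negative measure whose inverse transform is $C_0$; and the Hermitian symmetry $C_0(u)=\conj{C_0(-u)}$ is immediate from $\varphi$ being real-valued. This yields (C1). Next, the bound $0\le\varphi\le 1$ forces $\varphi^2\le\varphi$, hence $\varphi\in L^2(\real^d)$, and unitarity of $\mathcal F^{-1}$ on $L^2(\real^d)$ gives $C_0\in L^2(\real^d)$. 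Finally, with (C1) and $C_0\in L^2(\real^d)$ established, $\varphi=\mathcal F(C_0)$ is the spectral density of $C_0$, so Proposition~\ref{prop:c1c2} applies and the hypothesis $\varphi\le 1$ yields (C2).

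I expect the only genuinely delicate point to be the identification of the $\varphi$ in (i) with the spectral density: one must ensure that $C_0=\mathcal F^{-1}(\varphi)$ for the given $\varphi$ really does force $\varphi=\mathcal F(C_0)$ in the $L^2(\real^d)$ sense before quoting Proposition~\ref{prop:c1c2}, and that the Bochner-theorem step correctly produces a Hermitian, non-negative definite kernel from a non-negative integrable $\varphi$. Everything else is routine bookkeeping with the mapping properties of $\mathcal F$ and $\mathcal F^{-1}$ already recorded in the text.
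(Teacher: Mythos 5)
Your proof is correct and follows essentially the same route as the paper's: both directions rest on the same ingredients (the $\varphi^2\le\varphi$ trick plus Parseval to get $C_0\in L^2(\real^d)$, continuity of the inverse Fourier transform of an $L^1$ function, Bochner's theorem for positive definiteness, and Proposition~\ref{prop:c1c2} to pass between (C2) and $\varphi\le1$). The identification of the $\varphi$ in (i) with the spectral density $\mathcal F(C_0)$, which you flag as the delicate point, is handled the same way in the paper via the discussion preceding Proposition~\ref{prop:c1c2}.
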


\begin{remark}\label{rem:trade-off} 
There is a trade-off between how large the intensity
and how repulsive a stationary DPP can be: 
Consider a parametric model for $C_0$ with parameters $\rho$
and $\theta$.
For each fixed value of
 $\theta$, (C2) is equivalent to $0\le\rho\le\rho_{\max}$ where
$\rho_{\max}=\rho_{\max}(\theta)$ may depend on $\theta$ and is determined by
\eqref{e:C0-cond}.
 As exemplified in Section~\ref{sec:examples},
 $\rho_{\max}$ will be a decreasing function
of the range of correlation (which only depends on $\theta$).
On the other hand, it may be more natural to
determine
the range of $\theta$ in terms of $\rho$. 
\end{remark}

\subsection{Examples of covariance function models}\label{sec:examples}

Numerous examples of stationary and isotropic covariance functions exist
\citep[see e.g.][]{Gelfand:etal:10},
while examples of stationary and anisotropic covariance functions are
discussed in \cite{laco:palma:posa:03}. This section starts by
considering the simple example of the circular covariance function
and continues with a brief discussion of the broad class of stationary
isotropic covariance functions obtained by scaling in normal-variance
mixture distributions, where a few specific examples of such
models are considered in more detail. Section~\ref{sec:spectral}
discusses further examples based on a spectral approach.

Examples of isotropic covariance
functions $C_0(x)$, where the range
\begin{equation}\label{e:kors}
\delta=\sup \{\|x\|:C_0(x)\neq 0\}
\end{equation}
is finite are given in \cite{Wu:95} and
\cite{Gneiting:02}. Then, by Definition~\ref{def1}, 
$X_A$ and $X_B$
are independent DPPs
if $A,B\subset\real^d$
are separated by a distance larger than $\delta$. 
In this
paper we only consider the circular covariance function to understand well
the quality of our approximations in Section~\ref{sec:approx2}.
For $d=2$, the
circular covariance function with finite range $\delta>0$ is given by
\begin{equation}\label{e:circcovfct}
  C_0(x) = \rho \frac{2}{\pi} \left(\arccos(\|x\|/\delta) - \|x\|/\delta
    \sqrt{1-(\|x\|/\delta)^2}\right),\quad  \|x\|<\delta.
\end{equation}
Note that $\pi\delta^2 C_0(x)/(4\rho)$
is the area of the
intersection of two discs, each with diameter $\delta$, and with
distance $\|x\|$ between the centers. Since this area is equal to the
autoconvolution of the indicator function of the disc with center at
the origin and with diameter $\delta$, the associated spectral density
becomes
\begin{equation*}
  \varphi(x) = (\rho/\pi) (J_1(\pi \delta \|x\|)/\|x\|)^2
\end{equation*}
where $J_1$ is the Bessel function of the first kind with parameter
$\nu=1$. This spectral density has maximal value
$\varphi(0)=\rho\pi\delta^2/4$,
so by \eqref{e:C0-cond}, a stationary and isotropic 
DPP with kernel \eqref{e:circcovfct}  
exists if and only if
$0\le\rho\le\rhomax$, where $\rhomax=4/(\pi\delta^2)$. Therefore, we require
\begin{equation}\label{eq:circularmax}
    \rho\delta^2\le4/\pi.
\end{equation}

In the sequel we focus on more interesting classes of covariance
functions. Let $Z$ be a $d$-dimensional standard normally distributed
random variable,
and $W$ be a strictly positive
random variable with $\mean(W^{-d/2})<\infty$, where $Z$ and $W$ are independent.
Then $Y=\sqrt W Z$ follows a normal-variance mixture
distribution, with density
\[h(x)=\mean\left[
  W^{-d/2}\exp\left(-\|x\|^2/(2W)\right)\right]/(2\pi)^{d/2},\quad x\in\real^d.\]
Note that $h(0)=\sup h$, and define
\[C_0(x)=\rho h(x)/h(0),\quad x\in\real^d.\]
The Fourier transform of $C_0$ is
\[\varphi(x)=
\rho\,\mean\left[\exp\left(-2\pi^2\|x\|^2W\right)\right]/h(0),\quad x\in\real^d\]
which is positive, showing that $C_0$ is  a stationary and isotropic
covariance function.
Note that  $\varphi$ is given
by the Laplace transform of $W$.
By \eqref{e:C0-cond}, a stationary DPP with kernel $C_0$  exists if
$0\le\rho\le\rhomax$,
where $\rho$ is the intensity and
\[\rho_{\max}= h(0)=\mean(W^{-d/2})/(2\pi)^{d/2}.\]

\cite{gneiting:97} presents several examples of pairs $h$ and
$\mathcal F(h)$ in the one-dimensional case $d=1$, and these examples
can be generalized to the multivariate case.  Here we restrict
attention to the following three examples, where $Y$ follows either a
multivariate normal distribution or two special cases of the
multivariate generalized hyperbolic distribution
\citep{OBN:77,OBN:78}. We let $\Gamma(a,b)$ denote the
Gamma-distribution with shape parameter $a>0$ and scale parameter
$b>0$.

First, taking $\sqrt{2W}=\alpha$, where $\alpha>0$ is a parameter, we obtain
the Gaussian (or squared exponential) covariance
function
\begin{equation}\label{e:gaussian}
  C_0(x)=\rho\exp\left(-\|x/\alpha\|^2\right),\quad x\in\real^d,
\end{equation}
and
\[\varphi(x)=\rho(\sqrt\pi\alpha)^d\exp\left(-\|\pi\alpha
  x\|^2\right),\quad x\in\real^d.\]
Hence
\begin{equation}\label{eq:maxrhoGauss}
  \rho_{\max}=(\sqrt\pi\alpha)^{-d}
\end{equation}
is a decreasing function of $\alpha$.

Second, suppose that
$W\sim\Gamma(\nu+d/2,2\alpha^2)$ where $\nu>0$ and $\alpha>0$.
Then
\[h(x)=\frac{\|x/\alpha\|^\nu
  K_\nu(\|x/\alpha\|)}{2^{\nu+d-1}(\sqrt{\pi}\alpha)^{d}\Gamma(\nu+d/2)},\quad
x\in\real^d,\]
where $K_\nu$ is the modified Bessel function of the second kind (see
Appendix~\ref{matern-approx}).
Hence
\begin{equation}\label{e:matern}
C_0(x)={\rho}\, \frac{2^{1-\nu}}{\Gamma(\nu)}\|x/\alpha\|^\nu
K_\nu(\|x/\alpha\|),\quad x\in\real^d,
\end{equation}
is the Whittle-Mat{\'e}rn covariance function, where for $\nu=1/2$,
$C_0(x)={\rho}\exp(-\|x\|/\alpha)$
is the exponential covariance function.
Moreover,
\[\varphi(x)=\rho\, \frac{\Gamma(\nu+d/2)}{\Gamma(\nu)}
\frac{(2\sqrt\pi\alpha)^d}{(1+\|2\pi\alpha x\|^2)^{\nu+d/2}}\, ,\quad
x\in\real^d,\]
so
\begin{equation}\label{eq:maxrhoWM}
\rho_{\max}=\frac{\Gamma(\nu)}{\Gamma(\nu+{d}/{2})(2\sqrt\pi\alpha)^d}
\end{equation}
is a decreasing function of $\nu$ as well as of $\alpha$.

Third, suppose that $1/W\sim\Gamma(\nu,2\alpha^{-2})$ where $\nu>0$ and
$\alpha>0$. Then
\[h(x)=\frac{\Gamma(\nu+d/2)}{\Gamma(\nu)(\sqrt{\pi}\alpha)^{d}
\left(1+\|x/\alpha\|^2\right)^{\nu+d/2}},\quad x\in\real^d,\]
is the density of a multivariate
$t$-distribution, and
\begin{equation}\label{e:C-cauchy}
C_0(x)=\frac{\rho}{\left(1+\|x/\alpha\|^2\right)^{\nu+d/2}}\,
,\quad x\in\real^d,
\end{equation}
is the generalized Cauchy covariance function.
Furthermore,
\[\varphi(x)=\frac{\rho(\sqrt\pi
  \alpha)^d2^{1-\nu}}{\Gamma(\nu+d/2)}\|2\pi\alpha x\|^\nu
K_\nu(\|2\pi\alpha x\|)
,\quad x\in\real^d,\]
so
\begin{equation}\label{eq:maxrhoCauchy}
\rho_{\max}=\frac{\Gamma(\nu+d/2)}{\Gamma(\nu)(\sqrt\pi\alpha)^{d}}
\end{equation}
is an increasing function of $\nu$ and a decreasing function of
$\alpha$.

For later use, notice that the Gaussian covariance function
\eqref{e:gaussian} with $\alpha=1/\sqrt{\pi\rho}$ is the limit of both
\begin{enumerate}
\item[(i)] the \wm{} covariance function \eqref{e:matern} with
  $\alpha=1/\sqrt{4\pi\nu\rho}$, and
\item[(ii)] the Cauchy covariance function \eqref{e:C-cauchy} with
  $\alpha=\sqrt{\nu/(\pi\rho)}$
\end{enumerate}
as $\nu\to\infty$.

We refer to a DPP model with kernel
\eqref{e:gaussian}, \eqref{e:matern}, or \eqref{e:C-cauchy} as the
Gaussian, Whittle-Mat{\'e}rn, or Cauchy model, respectively. In all
three models, $\alpha$ is a scale parameter of $C_0$, and
for the Whittle-Mat{\'e}rn and Cauchy models,
$\nu$ is a shape parameter of $C_0$. Their isotropic pair correlation
functions are as follows.

  For the Gaussian model: $g_0(r)=1-\exp\left(-2(r/\alpha)^2\right),\quad r\ge0$.

  For the \wm{} model: $g_0(r)=1-\left[2^{1-\nu}(r/\alpha)^\nu
      K_\nu(r/\alpha)/\Gamma(\nu)\right]^2,\quad r\ge0.$ 
      
 For the Cauchy model: $g_0(r)=1-\left[1+(r/\alpha)^2\right]^{-2\nu-d},\quad r\ge0.$\\
In each case, $g_0(r)$ is a strictly increasing function from zero to
one.
Moreover, for fixed $\rho$ and
$\nu$, the upper limit  $\alpha_{\max}$ of $\alpha$ is given
by
\begin{equation}\label{eq:alphamax}
\alpha_{\max}=1/\sqrt{\pi\rho}, \quad \alpha_{\max}=1/\sqrt{4\pi\nu\rho}, \quad \alpha_{\max}=\sqrt{\nu/(\pi\rho)}
\end{equation}
for the Gaussian, \wm{}, and Cauchy models, respectively.

In the sequel, let $d=2$. For a given model as above, we choose the range of
correlation $r_0$ such that $g_0(r_0)=0.99$, whereby the isotropic correlation
function given by \eqref{e:r0} has absolute value $0.1$. While it is
straightforward to determine $r_0$ for the Gaussian and Cauchy model,
$r_0$ is not expressible on closed form for the \wm{} model, and in
this case we use the empirical result of \cite{Lindgren:etal:11}. The ranges of correlation for
the Gaussian, \wm{} and Cauchy models are then
\begin{equation}\label{eq:ranges}
  r_0 = \alpha \sqrt{-\log(0.1)}, \quad r_0 = \alpha \sqrt{8\nu}, \quad r_0 = \alpha \sqrt{0.1^{-1/(\nu+1)}-1},
\end{equation}
respectively. In each case, $r_0$ depends linearly on $\alpha$, and
when $\nu$ is fixed,  $\rhomax$ 
decreases as $r_0$ increases, since $\rho_{\max}$ is proportional to
$r_0^{-d}$, cf.\ \eqref{eq:maxrhoGauss}, \eqref{eq:maxrhoWM}, and
\eqref{eq:maxrhoCauchy}. There is a similar trade-off between how
large the intensity and the range of the circular covariance function
can be, cf.\ \eqref{eq:circularmax}.

\begin{figure}[!htbp]%
  \centering
  \subfloat[][]{
    \includegraphics[scale=.45]{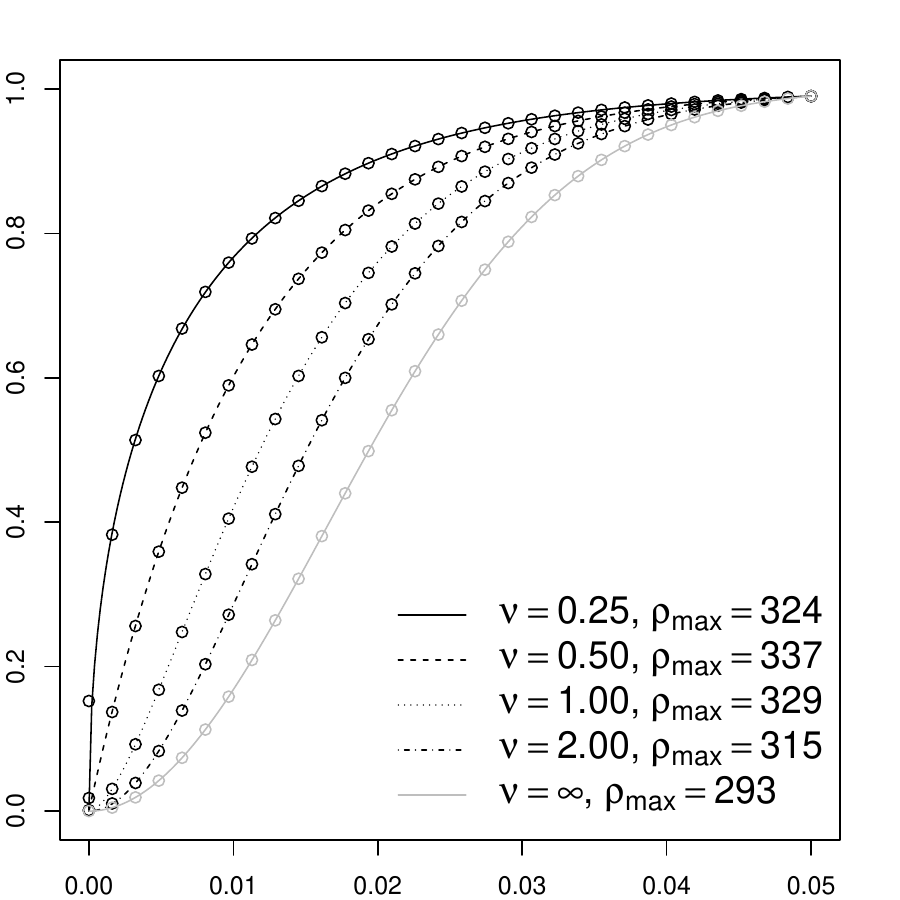}
    \label{subfig:pcf:matern}
  }
  \quad
  \subfloat[][]{
    \includegraphics[scale=.45]{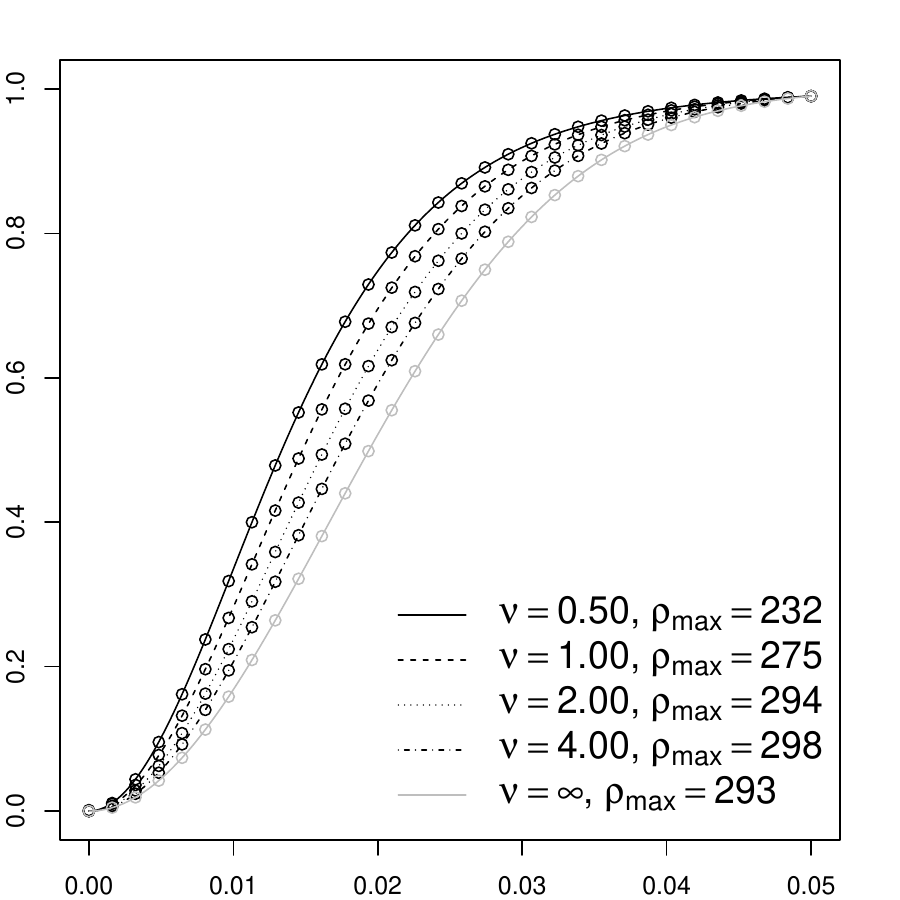}
    \label{subfig:pcf:cauchy}
  }
  \caption{ Isotropic pair correlation functions for
    \subref{subfig:pcf:matern} the Whittle-Mat{\'e}rn model and
    \subref{subfig:pcf:cauchy} the Cauchy model. Each black line
    corresponds to a different value of the shape parameter $\nu$.
    The pair correlation function for the Gaussian
    model ($\nu=\infty$) is shown in gray in both plots. For each
    model, the scale parameter $\alpha$ is chosen such that the range of
    correlation is fixed at $r_0=0.05$, and the
    corresponding value of $\rho_{\max}$ is reported in the legend. The
    circles show values of the approximate isotropic pair correlation function
    obtained by using the approximation $C_{{\mathrm{app}}}$
    described in Section~\ref{sec:approx2}. }
  \label{fig:pcfs}
\end{figure}

\begin{figure}[!htbp]%
  \centering
  \includegraphics[width=.45\textwidth]{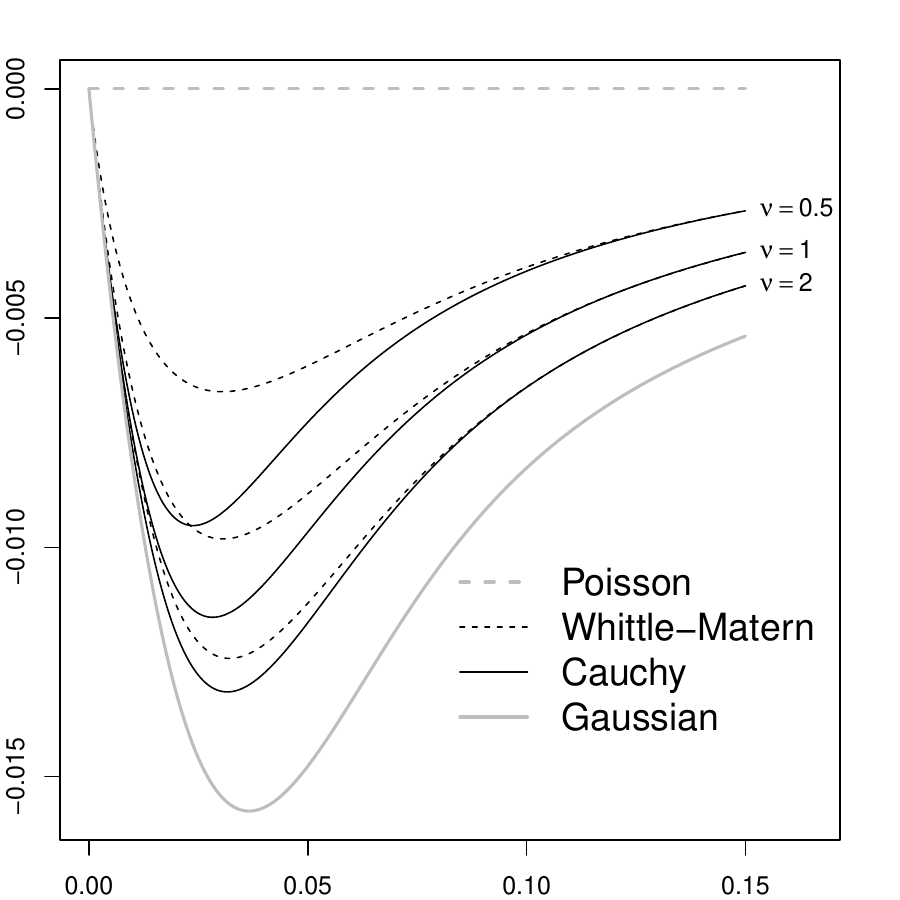}
  \caption{
    Plots of $L(r)-r$ vs.\ $r$ for the \wm{}, Cauchy, and Gaussian
    model with $\alpha=\alpha_{\max}$ and $\rho=100$. For the \wm{}
    and Cauchy models, $\nu\in\{0.5,1,2\}$. The horizontal line at
    zero is $L(r)-r$ for a stationary Poisson process.
  }
  \label{fig:compareLdiff}
\end{figure}

Figure~\ref{fig:pcfs} shows examples of the isotropic pair
correlation functions with a fixed range of correlation. In particular
the Whittle-Mat{\'e}rn DPPs have
several different shapes of pair correlation
functions and so they may
constitute a quite flexible
model class for repulsiveness. 
From the figure it is also evident that the value of
$\rho_{\max}$ is of the same order of magnitude for all these models,
indicating that the range of interaction has a major effect on the
maximal permissible intensity of the model.

Ripley's $K$-function \citep{ripley:76,ripley:77} is for $d=2$ given by
\begin{equation}\label{eq:defKfun}
  K(r) = 2\pi\int_0^r t g_0(t) \dee t, \quad r\ge0,
\end{equation}
and we obtain the following.

  For the Gaussian model: $K(r) = \pi r^2 -
  \frac{\displaystyle \pi\alpha^2}{\displaystyle 2}\left(1-\exp\left(\frac{\displaystyle -2r^2}{\displaystyle \alpha^2}\right)\right). 
 $ 
  
  For the Cauchy model: $K(r) =\pi r^2-\frac{\displaystyle \pi\alpha^2}{\displaystyle 2 \nu+1}
\left(1-\left(\frac{\displaystyle \alpha^2}{\displaystyle \alpha^2+r^2}\right)^{2\nu+1}\right).$

 For the \wm{} model:  The integral in \eqref{eq:defKfun} has to be
evaluated by numerical methods.


We consider the variance stabilizing transformation
of the $K$-function, $L(r)=\sqrt{K(r)/\pi}$ \citep{Besag:77a}, and
recall that $L(r)=r$ for a stationary Poisson process.
Figure~\ref{fig:compareLdiff} shows $L(r)-r$ for seven different
DPPs. Figures~\ref{fig:pcfs} and \ref{fig:compareLdiff} illustrate the dependence between the degree of repulsiveness and
$\nu$, which will be discussed in more detail in Appendix~\ref{sec:quantify}.

\subsection{Spectral approach}\label{sec:spectral}

As an alternative of specifying a stationary covariance function
$C_0$, involving the need for checking positive semi-definiteness and for controlling the upper bound of its Fourier transform, we
may simply specify an integrable function $\varphi:\real^d\to[0,1]$,
which becomes the spectral density, cf.\ Corollary~\ref{cor:first}.
In fact knowledge about $\varphi$
is all we need for the approximate simulation procedure and density
approximation in Section~\ref{sec:approx2}. However, the disadvantage
is that it may then be difficult to determine $C_0=\mathcal
F^{-1}(\varphi)$, and hence closed form expressions for $g$ and $K$ may
not be available. Furthermore, it may be more difficult to interpret
parameters in the spectral domain.

In the following we first describe a general method for constructing
isotropic models via the spectral approach. Second, this method is
used to construct a model class displaying a higher degree of
repulsiveness (in a sense made precise in Appendix~\ref{sec:quantify})
than the Gaussian model which appears as a special case.

Let $f:[0,\infty)\to[0,\infty)$ be any Borel function such that $\sup
f<\infty$ and $0<c<\infty$, where
\begin{equation}\label{eq:specdenconst}
  c = \int_{\real^d} f(\|x\|) \dee x =
  \frac{d\pi^{d/2}}{\Gamma(d/2+1)} \int_0^\infty r^{d-1}f(r)
  \dee r.
\end{equation}
Then we can define the spectral density of a stationary and 
isotropic DPP model as
\begin{equation}\label{eq:spectralapproach}
  \varphi(x) = \rho f(\|x\|)/c, \quad x \in \real^d,
\end{equation}
where $\rho$ is the intensity parameter.
The model is well-defined whenever
\begin{equation}\label{eq:specdenrhomax}
  \rho \le \rho_{\max} = c/\sup f.
\end{equation}
Below we give an example of a parametric model class for such
functions $f$, where the integral in \eqref{eq:specdenconst} and
the supremum in \eqref{eq:specdenrhomax} can be evaluated
analytically.

Assume $Y\sim\Gamma(\gamma, \beta)$ and let $f$ denote the
density of $Y^{1/\nu}$, where $\gamma>0$, $\beta>0$, and $\nu>0$ are
parameters. Let $\alpha=\beta^{-1/\nu}$, then by \eqref{eq:specdenconst} and
\eqref{eq:spectralapproach},
\begin{equation*}
  c = \frac{d\pi^{d/2}\Gamma(\gamma+\frac{d+1}{\nu})}{\Gamma(d/2+1)\Gamma(\gamma)}\alpha^{1-d}
\end{equation*}
and
\begin{equation}\label{eq:specdengengamma}
  \varphi(x) = \rho \frac{\Gamma(d/2+1)\nu\alpha^d}{d\pi^{d/2}\Gamma(\gamma+\frac{d-1}{\nu})}
  \|\alpha x\|^{\gamma\nu-1} \exp(-\|\alpha x\|^\nu).
\end{equation}
We have $\rhomax=0$ if $\gamma\nu<1$, and
\begin{equation}\label{eq:rhomaxgengamma}
  \rho_{\max} = \frac{c}{f((\gamma-1/\nu)^{1/\nu})} =
  \frac{d\pi^{d/2}\alpha^{-d}\Gamma(\gamma+\frac{d-1}{\nu})\exp(\gamma-1/\nu)}{\Gamma(d/2+1)\nu(\gamma-1/\nu)^{\gamma-1/\nu}}\quad\mbox{if
  $\gamma\nu\ge1$.}
\end{equation}
 We call a DPP model with a spectral density of
the form \eqref{eq:specdengengamma} a generalized gamma model. For
$\gamma\nu>1$, the spectral density \eqref{eq:specdengengamma} attains
its maximum at a non-zero value, which makes it fundamentally different
from the other models considered so far where the maximum is
attained at zero.

In the remainder of this section, we consider the special case
$\gamma=1/\nu$, so
\begin{equation}\label{eq:specdenpowerexp}
  \varphi(x) = \rho
  \frac{\Gamma(d/2+1)\alpha^d}{\pi^{d/2}\Gamma(d/\nu+1)}
  \exp(-\|\alpha x\|^\nu).
\end{equation}
We call a DPP model with a spectral density of the form
\eqref{eq:specdenpowerexp} a power exponential spectral model. For $\nu=2$,
this is the Gaussian model of Section~\ref{sec:examples}.

\begin{figure}[!htbp]%
  \centering
  \subfloat[][]{
    \includegraphics[width=.28\textwidth,bb=18 18 410 410]{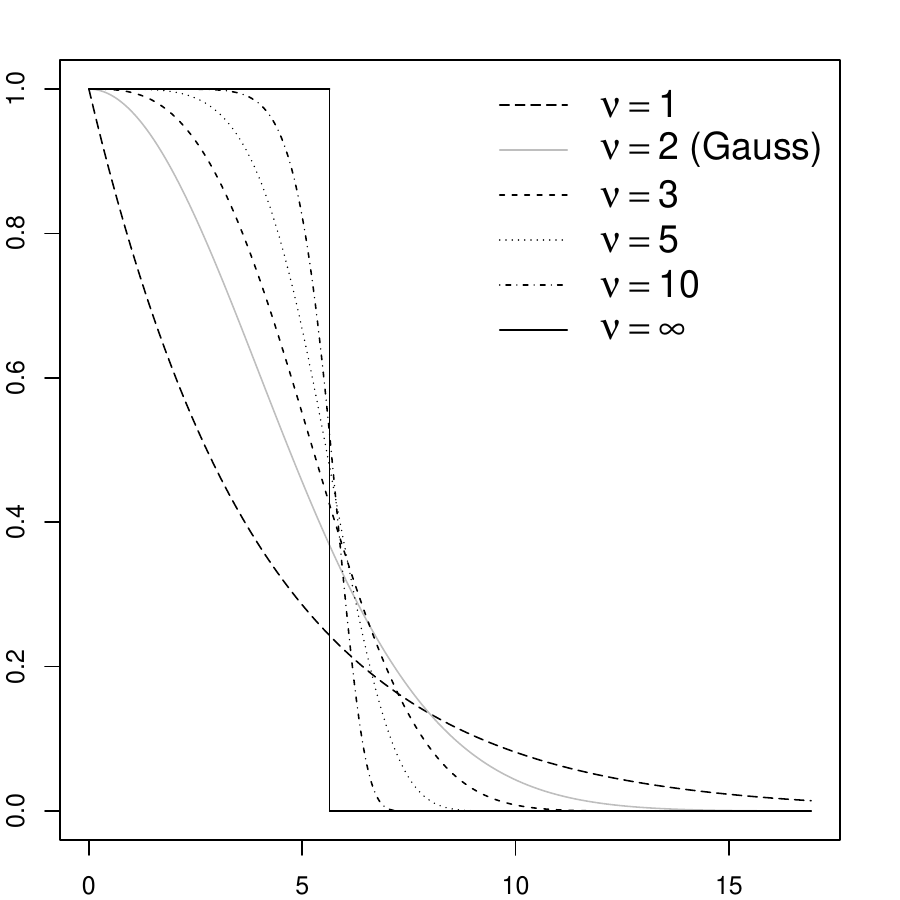}
    \label{fig:gengamma:specden}
  }
  \quad
  \subfloat[][]{
    \includegraphics[width=.28\textwidth,bb=18 18 410 410]{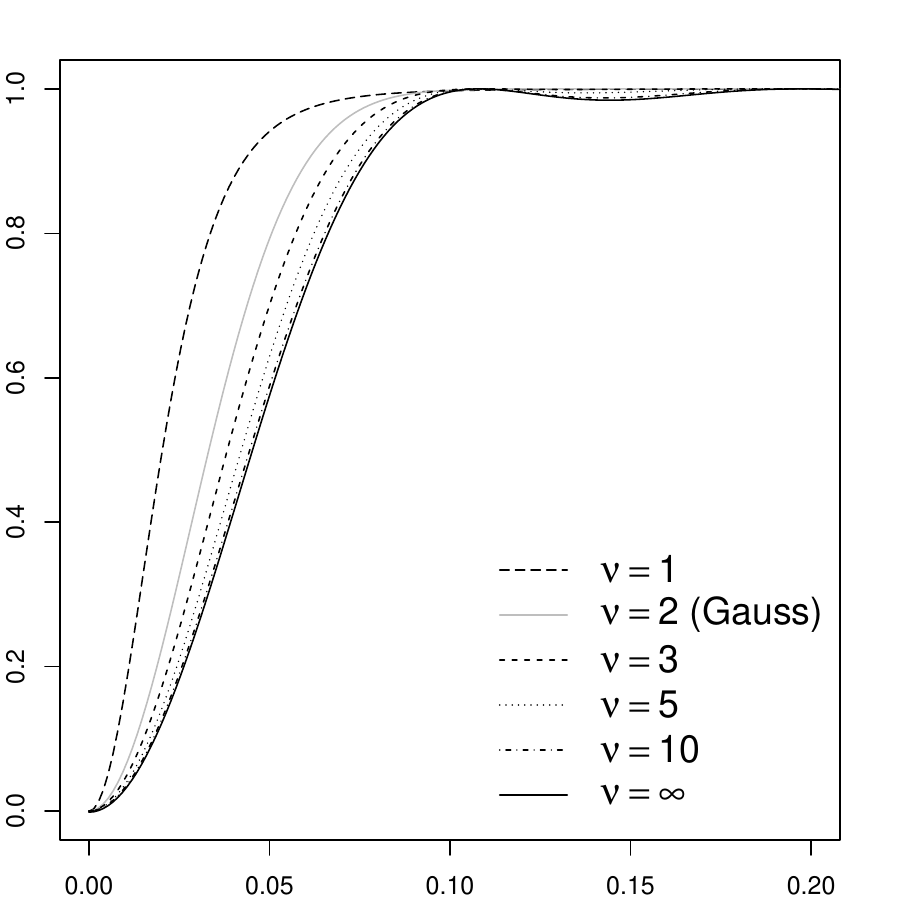}
    \label{fig:gengamma:pcf}
  }
  \quad
  \subfloat[][]{
    \includegraphics[width=.28\textwidth,bb=18 18 410 410]{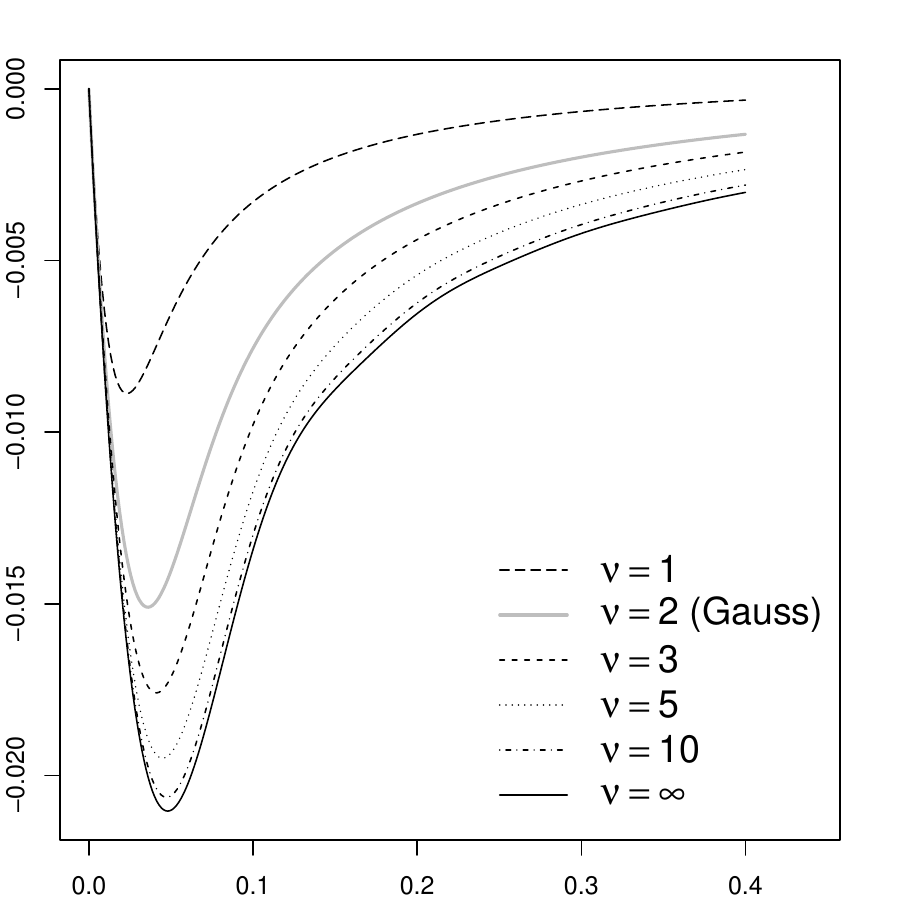}
    \label{fig:gengamma:L}
  }
  \caption{ \subref{fig:gengamma:specden} Isotropic spectral densities,
    \subref{fig:gengamma:pcf} approximate isotropic pair correlation
    functions, and \subref{fig:gengamma:L} approximate $L(r)-r$ functions
    for power exponential spectral models with $\rho=100$,
    $\nu=1,2,3,5,10,\infty$, and $\alpha=\alpha_{\max}$ the maximal
    permissible value determined by \eqref{eq:rhomaxgengamma}.}
  \label{fig:gengamma}
\end{figure}

For the power exponential spectral model,   for fixed $\rho$ and
$\nu$, $\alpha$ has an upper limit $\alpha_{\max}$ given by $\alpha^d_{\max}=\Gamma(d/\nu+1)\tau^{-d}$, where $\tau^d=\rho \Gamma(d/2+1) / \pi^{d/2}$. For the choice $\alpha=\alpha_{\max}$ in
\eqref{eq:specdenpowerexp}, the spectral density of the power
exponential spectral model becomes
\begin{equation}\label{e:527}
\varphi(x)=\exp(- \|\Gamma(d/\nu+1)^{1/d} x/\tau\|^\nu).
\end{equation}
Note that this function tends to the indicator function over the set $\{\|x\|\leq \tau\}$
 as $\nu$ tends to $\infty$. This limiting case corresponds
 to the most repulsive stationary DPP as specified in Appendix~\ref{sec:quantify}.
 For $d=2$, $C_0$ is then
proportional to
a 'jinc-like' function:
\begin{equation}\label{e:2trekant}
C_0(x)=\sqrt{\rho/\pi}\,
J_1(2\sqrt{\pi\rho}\|x\|)/\|x\| \quad \mbox{if $d=2$}.
\end{equation}

Figure~\ref{fig:gengamma} illustrates some properties of the power
exponential spectral
model when $\alpha=\alpha_{\max}$ and $\nu=1,2,3,5,10,\infty$. Recall
that $\nu=2$ is the Gaussian model.
Figure~\ref{fig:gengamma}\subref{fig:gengamma:specden} shows the
spectral density for these cases. Note that
the spectral density approaches an indicator function as
$\nu\to\infty$.
Since we are not aware of a close form
expression for $C_0=\mathcal F^{-1}(\varphi)$ when $\varphi$ is given
by \eqref{e:527},
we approximate $C_0$ by the periodic method discussed in
Section~\ref{sec:approx2}, leading to
approximating pair correlation functions shown in
Figure~\ref{fig:gengamma}\subref{fig:gengamma:pcf}.
Figure~\ref{fig:gengamma}\subref{fig:gengamma:L} shows the
corresponding approximations
of $L(r)-r$ (analogously to Figure~\ref{fig:compareLdiff} in
Section~\ref{sec:examples}).
 Figure~\ref{fig:gengamma} is discussed
in further detail in Appendix~\ref{sec:quantify}.

\section{Approximations}\label{sec:approx2}

Let again $X\sim\detproc(C)$ where $C(x,y)=C_0(x-y)$, cf.\
\eqref{stat-cov}, so that $X$ is stationary. By
Remark~\ref{rem:DPPonS} the restriction of $X$ to a bounded set $S$ is
the finite DPP $X_S \sim DPP_S(C)$. To simulate and evaluate the
density for such a process the spectral representation
\eqref{eq:eigenrep} is needed. Unfortunately, analytic expressions for
such representations are only known in a few simple cases (see e.g.\
\cite{Macchi:75}), which we believe are insufficient to describe the
interaction structure in real datasets. In the following we propose various
approximations which are easy to apply for any $C_0$ with a
known spectral density. Throughout this section we consider
$S=\unitbox$, but we also explain how 
the methods easily generalize to any rectangular set
due to the transformation property \eqref{e:C-trans-again}. 
Section~\ref{sec:approxkernel} concentrates on
approximation of the kernel,
Section~\ref{sec:appsim}
on how to
simulate an approximation of the DPP, and
Section~\ref{sec:approxdensity} on how to approximate the density of
the DPP.

\subsection{Approximation of the kernel $C$}\label{sec:approxkernel}

Denoting as before $\varphi$ the Fourier transform of $C_0$, we
consider $\Xapp{}\sim DPP_S(\Capp)$ where
\begin{equation}\label{e:approximation}
  \Capp(x,y)=\sum_{k\in\Z^d}\varphi(k) \mathrm{e}^{2\pi \mathrm{i}
    k\cdot (x-y)},\quad x,y\in S.
\end{equation} 
This is a spectral decomposition of $\Capp$ on $S$, as defined in
\eqref{eq:eigenrep}, and $\Xapp{}$ is well defined since
$\varphi\leq1$.

As justified in the following, for any $x,y\in S$,
\begin{equation}\label{e:stjerne}
  C(x,y)\approx \Capp(x,y)\quad \text{if }x-y\in S.
\end{equation}
Indeed, if $x-y\in S$, the Fourier expansion of $C_0$ on $S$
yields $C(x,y)=C_0(x-y)= \sum_{k\in\Z^d}\alpha_k\mathrm{e}^{2\pi
  \mathrm{i} k\cdot (x-y)}$ where
\begin{equation}\label{eq:alphadef}
  \alpha_k=\int_{S}C_0(t)\mathrm{e}^{-2\pi \mathrm{i} k\cdot t}\,\mathrm dt.
\end{equation}
Substituting $S$ by $\mathbb R^d$ in this integral, we obtain
$\varphi(k)$. Therefore, if $C_0(t)\approx0$ for $t\in\real^d\setminus
S$, then $\alpha_k\approx\varphi(k)$ and consequently
$C(x,y)\approx\Capp(x,y)$ when $x-y\in S$. This is 
the case for reasonable parameter values of the models, i.e.\ when the
expected number of points in $X_S$ is not very low (see Appendix~\ref{matern-approx} for the \wm{} model). For instance, 
Figure~\ref{fig:pcfs} 
indicates that the
approximation is accurate 
as the
approximate pair correlation functions marked by circles in the plot
are very close to the true curves. 
Furthermore, for covariance functions with finite range $\delta<1/2$
(see \eqref{e:kors}), $C_0(t)=0$ for $t\in\real^d\setminus S$, and so
$C(x,y)=\Capp(x,y)$ if $x-y\in S$, i.e.\
the approximation 
\eqref{e:stjerne} is then exact. For instance, considering the
circular covariance function \eqref{e:circcovfct} and the existence condition
\eqref{eq:circularmax}, we have $\delta<1/2$ if $\rho>16/\pi$, which
indeed is not a restrictive requirement in practice.


If $x,y\in S$ and $x-y\notin S$, then $\Capp(x,y)$ is no longer an
approximation of $C(x,y)$, but rather an approximation of its periodic
extension given by $\sum_{k\in\Z^d}\alpha_k\mathrm{e}^{2\pi \mathrm{i}
  k\cdot (x-y)}$ for all $x,y\in S$.

\subsection{Approximate 
simulation}\label{sec:appsim}

It is straightforward to simulate $\Xapp{}$, since
\eqref{e:approximation} is of the form required for the simulation
algorithm of Section~\ref{sec:sim}. 
Figure~\ref{fig:sim}\subref{subfig:simalg} shows the
acceptance probability for a uniformly distributed proposal (used for
rejection sampling when simulating from one of the densities $p_i$)
when $\Xapp{}$ is simulated. The interior region in the figure is
$S/2=\halfbox$. If $x,y\in S/2$, then $x-y\in S$ and based on the
arguments above we expect that $\Xapp{S/2}$ is a good approximation of
$X\cap S/2$. In practice it turns out that the approximation works
well for the entire region $S$. This may intuitively be explained by
Figure~\ref{fig:sim} where the qualitative behavior of $\Xapp{}$ and
$\Xapp{S/2}$ are similar e.g.\ in the sense that there are regions at
the borders where the acceptance probability is low. For 
$\Xapp{S/2}$, this is due to the influence of points outside
$S/2$. For $\Xapp{}$, this is created artificially by points at the
opposite border. Therefore, we use $\Xapp{}$ as an
approximate simulation of $X_S$. We refer to this approach as the
periodic method of simulation.

More generally suppose we want to simulate $X_R$ where
$R\subset\real^d$ is a rectangular set. We then define an affine
transformation $T(x)=Ax+b$ such that $T(R)=S$. Then $Y=T(X)$ is a
stationary DPP, with kernel given by
\eqref{e:C-trans-again} and spectral density $\varphi_{Y}(x) =
\varphi(A^Tx)$. Let $\Yapp{}$ be the DPP on $S$
with kernel \eqref{e:approximation} where $\varphi$ is replaced by
$\varphi_Y$. Then we simulate $\Yapp{}$ and return $T^{-1}(\Yapp{}\cap
S)$
as an approximate simulation
of $X_R$. We refer to this simulation
procedure as the border method for simulating $X_R$.



\begin{figure}[!htbp]%
  \centering
  \subfloat[][]{
    \includegraphics[width=.45\textwidth]{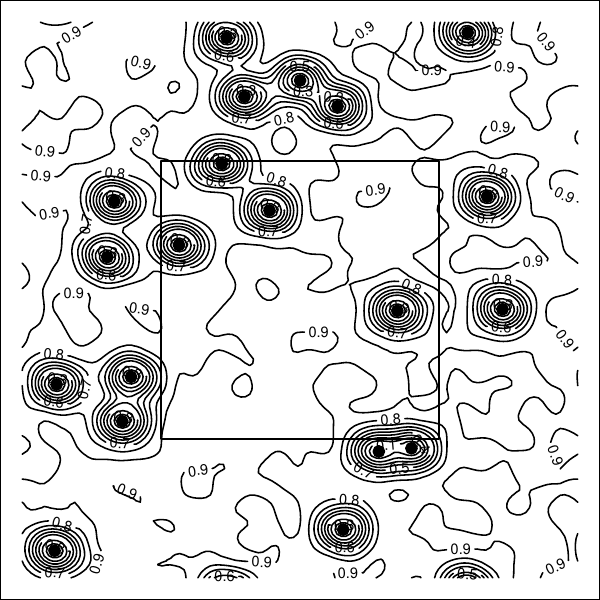}
    \label{subfig:simalg}
  }
  \quad
  \subfloat[][]{
    \includegraphics[width=.45\textwidth,bb=18 18 410 410]{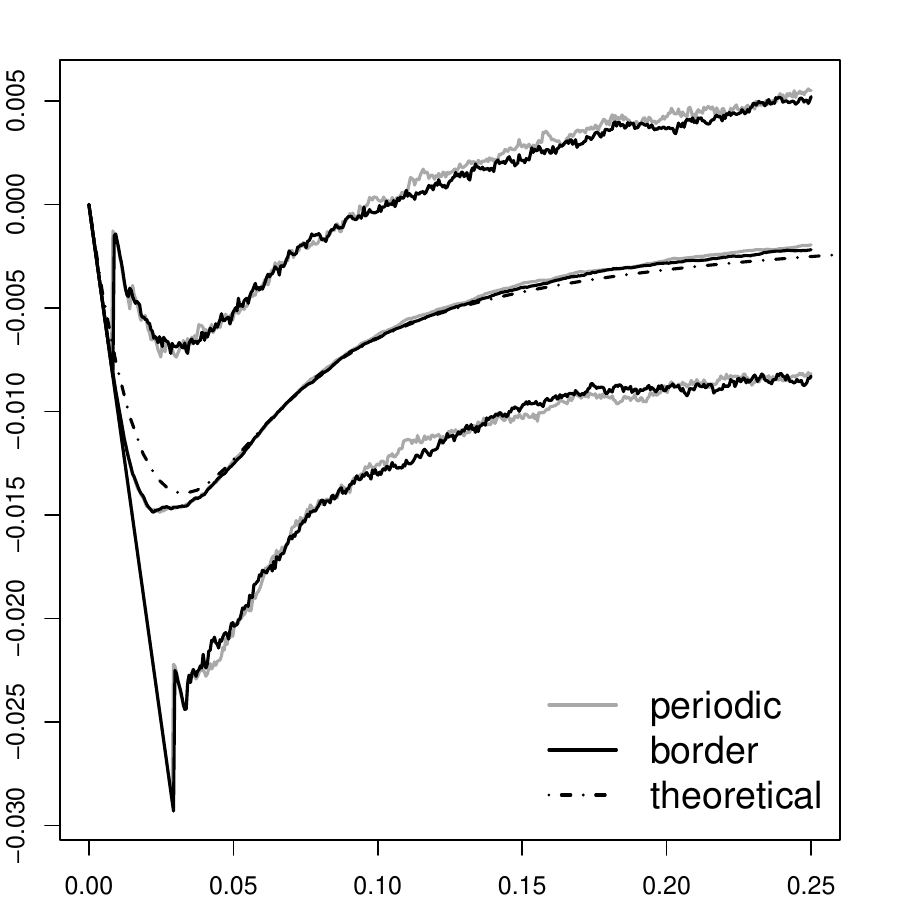}
    \label{subfig:comparebordper}
  }
  \caption{ \subref{subfig:simalg} Acceptance probability for a
    uniformly distributed proposal at an intermediate step of
 Algorithm~1 (Section~\ref{sec:sim}) when 
    simulating a realization of $\Xper{}$ on $S=\unitbox$. The interior box
    is the region $S/2=\halfbox$. The black points
    represent previously generated points, and the acceptance
    probability is zero at these
    points. \subref{subfig:comparebordper} Empirical means and $2.5\%$
    and $97.5\%$ pointwise quantiles of $L(r)-r$ using either the
    periodic method (gray lines) or the border method (black lines), and based on 1000
    realizations of a Gaussian model with $\rho=100$ and
    $\alpha=0.05$. The dashed line corresponds to the theoretical $L(r)-r$ function for this
    Gaussian model. }
  \label{fig:sim}
\end{figure}

An alternative way of approximately
simulating $X_R$ is the border method described as follows.
We simply redefine the affine transformation above such that
$T(R)=S/2$. Then $Y=T(X)$ is again a stationary DPP with spectral
density $\varphi_{Y}(x) = \varphi(A^Tx)$, and $T^{-1}(\Yapp{S/2})$ is an
approximate simulation of $X_R$. While this approximation is
mathematically well founded it is computationally more expensive (it
needs to simulate $2^d$ times more points in average). In
our experience, the periodic method works equally well in practice.
In particular we
have compared the two methods for simulating DPPs with kernels given
by circular covariance functions. In this case the border method
involves no approximation and comparison of plots of the empirical distribution
of various summary statistics revealed almost no difference between
the two methods (these plots are omitted to save space).

For a Gaussian covariance function,
Figure~\ref{fig:sim}\subref{subfig:comparebordper} shows empirical
means and $2.5\%$ and $97.5\%$ pointwise quantiles of $L(r)-r$ using
either the periodic method (gray lines) or the border method (black
lines), and based on 1000 realizations of a Gaussian model with
$\rho=100$ and $\alpha=0.05$. The corresponding curves for the two
methods are in close agreement, which suggests that the two methods
generate realizations of nearly the same DPPs. This was also concluded
when considering other covariance functions and functional summary
statistics (plots not shown here). In
Figure~\ref{fig:sim}\subref{subfig:comparebordper} the
empirical means of $L(r)-r$ are close to the theoretical $L(r)-r$
function for the Gaussian model, indicating that the two approximations of
the Gaussian model are appropriate.

The computational efficiency of the periodic method makes it our
preferred method of simulation. The 1000 realizations used in
Figure~\ref{fig:sim}\subref{subfig:comparebordper} were generated in
approximately three minutes on a laptop with a dual core processor.

\subsection{Approximation of the density 
}\label{sec:approxdensity}

First, consider the density $f$ for $X_S$ as specified in
Theorem~\ref{thm:density} (so we assume $\varphi<1$). We may use the
approximation $f\approx\fapp$, where 
\begin{equation}\label{e:unconddensityper}
  \fapp(\{x_1,\ldots,x_n\}) = \exp(|S|-\Dapp) \det
  [\Ctapp](x_1,\ldots,x_n),\quad \{x_1,\ldots,x_n\}\subset S,
\end{equation}
denotes the density of
$\Xapp{}$, with
\begin{equation}\label{e:(a)}
\tilde\varphi(u) = \varphi(u)/(1-\varphi(u)),\quad u\in S,
\end{equation}
\begin{equation}\label{e:(b)}
\Ctapp(x,y) = \Ctappstat(x-y) = \sum_{k\in\Z^d}
\tilde\varphi(k) \mathrm{e}^{ 2\pi \mathrm{i} k\cdot(x-y)},\quad
x,y\in S,
\end{equation}
\begin{equation}\label{e:(c)}
\Dapp = \sum_{k\in\Z^d} \log\left(1+\tilde\varphi(k)\right).
\end{equation}

Second, consider the density of $X_R$, where $R\subset\real^d$ is
rectangular. Then we use the affine
transformation from above with $T(R)=S$ to define $Y=T(X)$. If $\fapp_Y$ 
denotes the approximate density of $Y$ as specified by the
right hand side of \eqref{e:unconddensityper}, we can approximate
the density of $X_R$ by
\begin{equation*}
\fapp(\{x_1,\dots,x_n\}) =
|R|^{-n}\exp(|R|-|S|)\fapp_Y (T(\{x_1,\dots,x_n\})),\quad
\{x_1,\ldots,x_n\}\subset R.
\end{equation*}

We call $\fapp$ the periodic approximation of $f$. 
To evaluate this
approximation in practice we need to use truncated versions of 
\eqref{e:(b)} and \eqref{e:(c)}. For a given integer $N>0$ (the choice of $N$ is discussed
in Section~\ref{sec:likest}), let
$\Z_N=\{-N,-N+1,\dots,N-1,N\}$ and define
\begin{equation}\label{eq:Dtrunc}
  D_N = \sum_{k\in\Z_N^d} \log(1+\tilde\varphi(k))
\end{equation}
and
\begin{equation}\label{eq:ctildetrunc}
  \Ct_N(u) = \sum_{k\in\Z_N^d} \tilde\varphi(k) \mathrm{e}^{ 2\pi
    \mathrm{i} k \cdot u},\quad u\in\real^d.
\end{equation}
While it is feasible to evaluate \eqref{eq:Dtrunc} for large values of
$N$, the evaluation of \eqref{eq:ctildetrunc} is more problematic
since it needs to be carried out for every distinct pair of points in
$\{x_1,\dots,x_n\}$. For moderate $N$ (few hundreds) direct calculation of
\eqref{eq:ctildetrunc} can be used. In this case, we can exploit the
fact that $\tilde\varphi$ often is an even function (corresponding to
a real-valued $C_0$) such that all imaginary parts in
\eqref{eq:ctildetrunc} cancel. This allows to reduce the number of
terms in the sum by a factor $2^d$, which speeds up calculations
considerably when evaluating the approximate density. For large $N$
(hundreds or thousands) we use the fast Fourier transform (FFT) of $\tilde\varphi$. The FFT
yields values of $\Ct_N$ at a discrete grid of values and we
approximate $\Ct_N(x_i-x_j)$ by bilinear interpolation.
The simulation
study in Section~\ref{sec:hom} shows that likelihood inference based
on $\fapp$ works well in practice for the examples in this
paper. Appendix~\ref{sec:approx3} introduces a convolution
approximation of the density which in some cases may be
computationally faster to evaluate. However, as discussed in
Appendix~\ref{sec:approx3},
this approximation appears to be poor in some situations and
in general we prefer the periodic approximation.

\section{Inference for stationary models}\label{sec:hom}

In this section, we discuss and exemplify how to estimate parameters of
stationary
DPP models and how to
do model comparison.  Section~\ref{sec:likest} focuses on maximum
likelihood based inference, Section~\ref{sec:nonlikest} considers
alternative ways of performing inference,
and Section~\ref{sec:simstudy} discusses a simulation study of the
approaches of Sections~\ref{sec:likest}-\ref{sec:nonlikest}.  Examples
of the estimation and model comparison procedures when modelling real
datasets are given in Sections~\ref{sec:spanish}-\ref{sec:termites} where we also discuss
model checking. Furthermore, in Sections~\ref{sec:likest1} and
\ref{sec:simstudy}, we discuss the commonly used non-parametric
intensity estimate $\hat\rho=n/|S|$ in comparison to the maximum
likelihood estimate (MLE) of $\rho$.


Throughout this section we assume that $\{x_1,\dots,x_n\}$ is a realization of $X \sim \detproc(C)$
restricted to a compact set $S$, where $C(x,y)=C_0(x-y)=\rho R_0(\|x-y\|)$ is modelled by one of
the parametric models of
Sections~\ref{sec:examples}-\ref{sec:spectral}, namely the Gaussian,
\wm{}, Cauchy, and power exponential spectral models --- for short we
refer to these as the four parametric models of DPPs. Recall that
$\rho$ is the intensity parameter, $\theta$ denotes the parameter of
the correlation function $R_0$, and a given value of $\rho$ introduces a bound on
the parameter space for $\theta$ which is denoted $\Theta_\rho$. 

\subsection{Maximum likelihood based inference}\label{sec:likest}



Eventhough it is feasible to estimate $\rho$ by the MLE, for
computational reasons we prefer the estimate $\hat\rho=n/|S|$. This
choice is discussed in Section~\ref{sec:likest1}. 
Further, we only consider the
(approximate) MLE for $\theta$ as the value that maximizes the
approximate log-likelihood, i.e.\ the truncated version of the density $\fapp$
in \eqref{e:unconddensityper},
\begin{equation*}
  \ell_N(\theta) = \log\det[\Ct_N](x_1,\ldots,x_n) - D_N, \quad \theta\in\Theta_{\hat\rho},
\end{equation*}
where $[\Ct_N](x_1,\ldots,x_n)$ is the $n\times n$ matrix with
$(i,j)$'th element $\Ct_N(x_i-x_j)$. Here we suppress in the
notation that $\Ct_N$ and $D_N$ depend on $(\rho,\theta)$ through
$\tilde\varphi$, cf.\ \eqref{eq:Dtrunc}-\eqref{eq:ctildetrunc}.
 If $\theta$ is one
dimensional, the maximum of $\ell_N(\theta)$ can be determined by a simple
search algorithm, otherwise the simplex algorithm by
\cite{Nelder:Mead:65} can be used. Note that these methods do not require
explicit knowledge of the derivatives of $\ell_N(\theta)$.

Concerning the choice of $N$, note that the sum
\begin{equation*}
  S_N = \sum_{k\in\Z_N^d} \varphi(k)
\end{equation*}
tends to $\rho$ from below as $N$ tends to infinity. Hence, for any
value of $\theta$, one criterion for choosing $N$ is to require
e.g.\ $S_N>0.99\hat\rho$. However, this may be insufficient as $N$
also determines the grid resolution when FFT is used, and a high
resolution may be required to obtain a good approximation of the
likelihood. Therefore, in the FFT case we use increasing values of $N$
until the approximate MLE stabilizes.

When comparing the four parametric models fitted to the
same dataset,
we prefer the model with the largest value of
$\ell_N(\theta)$. The comparison of $\ell_N(\theta)$ between different models is valid, since the dominating measure is the same.

\subsubsection{MLE for the intensity}\label{sec:likest1}

Rather than fixing the estimate of the intensity to $\hat\rho=n/|S|$,
we may estimate both $\rho$ and $\theta$ by maximum likelihood. This has
been done for the simulated Gaussian model given in
Section~\ref{sec:simstudy}, where we observed that the MLE of $\rho$ is
very close to $n/|S|$. This has further been done for each DPP model
fitted to the real datasets in Sections~\ref{sec:spanish}-\ref{sec:termites}, where the
largest relative difference between the non-parametric estimate and
the MLE of $\rho$ was $4\%$.

When $\rho$ is not too close to $\rhomax$,
the fact that the MLE appears to be close to
 $n/|S|$ may be
understood in the following
way. By applying the convolution approximation in
Appendix~\ref{sec:convolutionapprox}, rough
approximations $\tilde C(x,y)\approx C_0(x-y)$ and $D\approx|S|\rho$
 are obtained by considering only the first terms
in \eqref{eq:Ctilde0conv} and \eqref{eq:Dconv}. Hence a rough
approximation of the log-likelihood is
\[\ell(\rho,\theta;\{x_1,\ldots,x_n\})\approx -|S|\rho + n\log\rho +
\log \det [C^\dagger](x_1,\ldots,x_n) \]
where $C^\dagger(x,y)=C_0(x-y)/\rho$ depends only on $\theta$ and not
on $\rho$. The maximum point of this approximate
log-likelihood has $\rho=n/|S|$.

On the other hand, for very repulsive DPPs, the number of points has a
small variance, and so we may expect the intensity to be close to the
observed $n/|S|$. In particular, the approximation to the most
repulsive DPP is a determinantal projection process and the observed
intensity is then non-random.

\subsection{Alternative approaches for inference}\label{sec:nonlikest}

Given a parametric DPP model there are several feasible approaches for
inference which are not based on maximum likelihood. For example,
parameter estimation can be based on composite likelihood, Palm likelihood,
generalized estimating equations, or minimum contrast methods. See
 \cite{moeller:waagepetersen:07}, \cite{Prok:Jensen:10},
and the references therein.
Here we
only briefly recall how the minimum contrast estimate (MCE)
\citep{Diggle:Gratton:84} is calculated.

Let $s(r;\theta)$, $r\ge0$, denote a functional summary statistic for
which we have a closed form expression, where
$\theta\in\Theta_{\hat\rho}$ and $\hat\rho=n/|S|$. In our examples, this will be
either the pair correlation function $g$ or the $K$-function. Further,
let $\hat s(r)$ be a non-parametric estimate of $s$ based on the data
$\bx$. The MCE based on the functional summary statistic $s$ is the
value of $\theta$ which minimizes
\begin{equation*}
  D(\theta) = \int_{r_l}^{r_u} |\hat s(r)^q - s(r;\theta)^q|^p \dee r
\end{equation*}
where 
$r_l<r_u$, 
$p>0$, and $q>0$ are user-specified parameters. Following the
recommendations in \cite{Diggle:03}, we let $q=1/2$, $p=2$, and
$r_u$ be one quarter of the minimal side length of $S$. It is
customary to use $r_l=0$ and we do this when the MCE is based on the
$K$-function. However, when the MCE is based on $g$, we let $r_l$ be one
percent of the minimal side length of $S$ 
to avoid numerical
instabilities of the estimate of $g$ close to zero.
To minimize $D(\theta)$
 we use the same method as was used for maximizing
$\ell_N(\theta)$ in Section~\ref{sec:likest}, which avoids the use of
derivatives of $D(\theta)$.

Finally, when several different models are
fitted to the same dataset, the one with minimal value of $D(\theta)$
is preferred.

\subsection{Simulation study}\label{sec:simstudy}

We have generated 500 realizations in the unit square of the following
five models: Gaussian, \wm{} with $\nu=0.5$, \wm{} with $\nu=1$,
Cauchy with $\nu=0.5$, and Cauchy with $\nu=1$. For all models,
$\rho=200$ and $\alpha=\alpha_{\max}/2$, where $\alpha_{\max}$ is
given by \eqref{eq:alphamax}. In our experience it is difficult to
identify the parameters $\nu$ and $\alpha$ simultaneously, which is a
well-known issue for the \wm{} covariance function \citep[see
e.g.][]{Lindgren:etal:11}. Here we consider $\nu$ known such that the
remaining parameter to estimate is one dimensional, i.e.\
$\theta=\alpha$.

Table~\ref{tab:esttotal} provides the empirical means and standard
deviations of the MCE based on $K$, the MCE based on $g$, and the
MLE, where for each model,
the MLE is calculated for several different values of $N$.
In general, we see that as long as the truncation is sufficiently
large the MLE outperforms the MCE since the former
 has smaller biases and smaller
standard deviations.

\begin{table}[ht]
\begin{center}
  \caption{Empirical means and standard deviations (in parentheses) of
    parameter estimates based on 500 simulated datasets for each of 5
    different models with intensity $\rho = 200$. Model 1: Gauss;
    Model 2: \wm{} ($\nu=0.5$); Model 3: \wm{} ($\nu=1$); Model 4:
    Cauchy ($\nu=0.5$); Model 5: Cauchy ($\nu=1$). The columns from
    left to right are: The true value of $\alpha$, MCE based on the
    $K$-function, MCE based on $g$, MLE with $N=256$, MLE with
    $N=512$, MLE with $N=1024$, and MLE with $N=2048$. All entries are
    multiplied by 100 to make the table more compact.}
\label{tab:esttotal}
\resizebox{\textwidth}{!} {
\begin{tabular}{rlllllll}
  \hline
 & $\alpha$ & $K$ & $g$ & MLE256 & MLE512 & MLE1024 & MLE2048 \\ 
  \hline
1 & 2.00 & 2.05 (0.58) & 1.99 (0.51) & 1.42 (0.25) & 2.01 (0.43) & 2.01 (0.43) & 2.01 (0.43) \\ 
  2 & 1.40 & 1.59 (0.88) & 1.48 (0.92) & 1.77 (0.11) & 1.62 (0.56) & 1.55 (0.63) & 1.52 (0.67) \\ 
  3 & 1.00 & 1.02 (0.46) & 0.95 (0.54) & 0.97 (0.18) & 1.00 (0.36) & 1.00 (0.37) & 1.00 (0.37) \\ 
  4 & 1.40 & 1.48 (0.68) & 1.30 (0.87) & 1.39 (0.23) & 1.37 (0.54) & 1.38 (0.54) & 1.38 (0.55) \\ 
  5 & 2.00 & 2.07 (0.83) & 1.91 (0.97) & 1.69 (0.29) & 2.01 (0.61) & 2.01 (0.62) & 2.02 (0.61) \\ 
   \hline
\end{tabular}}
\end{center}
\end{table}

The quality of the likelihood approximation is closely related to the
decay rate of the spectral density of the model, or equivalently to
the rate of convergence of $S_N$. Figure~\ref{fig:truncsum} shows
$S_N$ for different values of $N$ for each of the five models. It is
clear that the two \wm{} models approach the theoretical limit
$\rho=200$ at a slower rate than the other models, and this makes the
likelihood approximation inaccurate for small $N$ leading to bias in
the estimates shown in Table~\ref{tab:esttotal}.

\begin{figure}[!htbp]%
  \centering
    \includegraphics[scale=.45]{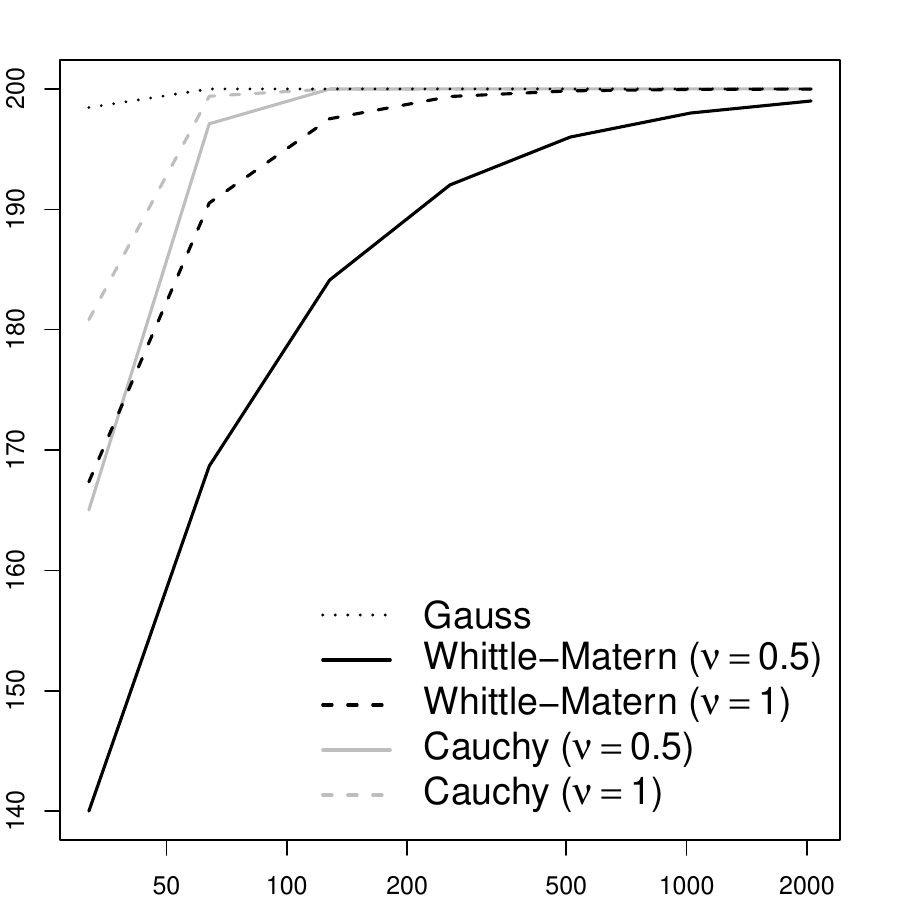}
  \caption{ $S_N$ as a function of $N$. }
  \label{fig:truncsum}
\end{figure}

For the Gaussian model above, we have tried to include $\rho$ as a
freely varying parameter when maximizing the likelihood with
$N=2048$. For each realization the MLE of $\rho$ was very close to the
non-parametric estimate $n/|S|$, and the largest relative difference
between the two estimates of $\rho$ for the 500 realizations was
$0.3\%$. This fits well with Section~\ref{sec:likest1} where heuristic arguments
suggest that the MLE for $\rho$ is close to $n/|S|$.

\subsection{Spanish towns dataset}\label{sec:spanish}

The Spanish towns dataset (Figure~\ref{fig:data}\subref{subfig:data:spanish}) was first analysed in
\cite{Glass:Tobler:71}. In a subsequent analysis, \cite{Ripley:88}
used a Strauss hard-core model specified by four parameters: A
hard-core distance $\hc$, an interaction distance $R$, an abundance
parameter $\beta$, and an interaction parameter $\gamma$. The MLE for
$\hc$ is the minimal observed distance, $r_{\min}$, but more commonly
$\frac{nr_{\min}}{n+1}$ is used. Estimation of $R$ is typically based
on an ad-hoc method such as maximum profile pseudo-likelihood (see
e.g.\ \cite{moeller:waagepetersen:00}). Conditionally on $\hc$ and
$R$, the parameters $\beta$ and $\gamma$ can e.g.\ be estimated by the
maximum pseudo-likelihood method or much more computationally
demanding Markov chain Monte Carlo methods to approximate the
likelihood can be used. Following an analysis in
\cite{illian:penttinen:stoyan:stoyan:08}, we use their estimates
$\hcest=0.83$ and $\hat{R}=3.5$. Further, we estimate $\beta$ and
$\gamma$ using the approximate likelihood method of
\cite{Huang:Ogata:99} available in \texttt{spatstat}. The estimates
are $\hat{\beta}=0.13$ and $\hat{\gamma}=0.48$.

Figure~\ref{fig:spanishenvelopes} is used to assess the
goodness of fit for the Strauss hard-core model. The dashed central
lines show non-parametric estimates of $L(r)-r$, the nearest neighbour distribution function
$G(r)$, and 
the empty space function $F(r)$ (for definitions of $F$ and $G$, see e.g.\
\cite{moeller:waagepetersen:00}). The plots in the top row also show $2.5\%$ and $97.5\%$ pointwise quantiles (gray lines) for these
summary statistics based on 4000 simulations of the fitted Strauss hard-core
model. Overall the model appears to provide
an acceptable fit, but the characteristic cusp of the envelopes of
$L(r)-r$ at $r=\hat{R}=3.5$ seem to be a somewhat artificial model effect that
the dataset does not exhibit (see also Example~3.14 in
\cite{illian:penttinen:stoyan:stoyan:08}). The plots in the bottom row
show the rank envelopes of \cite{myllymaki:etal:13} using the
authors' R package \texttt{spptest} with a
significance level of 5\%. In contrast to the pointwise envelopes
these are global envelopes and if the non-parametric estimate exits
the envelopes the deviation is significant at the 5\% level. The rank
envelope test also yields an interval for the $p$-value, and the
$p$-value intervals for all three summary
statistics are above 5\% (which corresponds to
the non-parametric estimates staying within the envelopes).

As an alternative to the Strauss hard-core model, we now consider the
four parametric classes of DPP models. The intensity estimate is
$\hat{\rho}=0.043$, and the fitted \wm{} model (with
$\hat\nu=2.7$ and $\hat\alpha=0.819$) has the highest value of the
likelihood and it is therefore preferred over the Cauchy and power
exponential spectral models which also have three parameters.
 As the Gaussian model has only two parameters and is a
(limiting) special case of the \wm{} model, we carry out a simulation based
likelihood-ratio test as follows. Using 400 simulated realizations
under the fitted Gaussian model (with $\hat{\alpha}=2.7$), we fit both the Gaussian model and
the (alternative) \wm{} model. Then, for each sample, we evaluate
$D=-2\log(Q)$ where $Q$ is the ratio of the Gaussian and the \wm{}
likelihoods. We finally compare the distribution of $D$ over the 400
simulated realizations with the observed value of $D$ for the
dataset. The resulting $p$-value is 0.03 and we reject the Gaussian
model in favor of the \wm{} model. Again
Figure~\ref{fig:spanishenvelopes} is used to assess the goodness of
fit which appears to be quite good as none of the non-parametric
estimates exit neither the 95\% pointwise envelopes or the 5\% rank
envelopes obtained from simulations
under the fitted \wm{} model.

\begin{figure}[!htbp]%
  \centering
  \includegraphics[width=.92\textwidth]{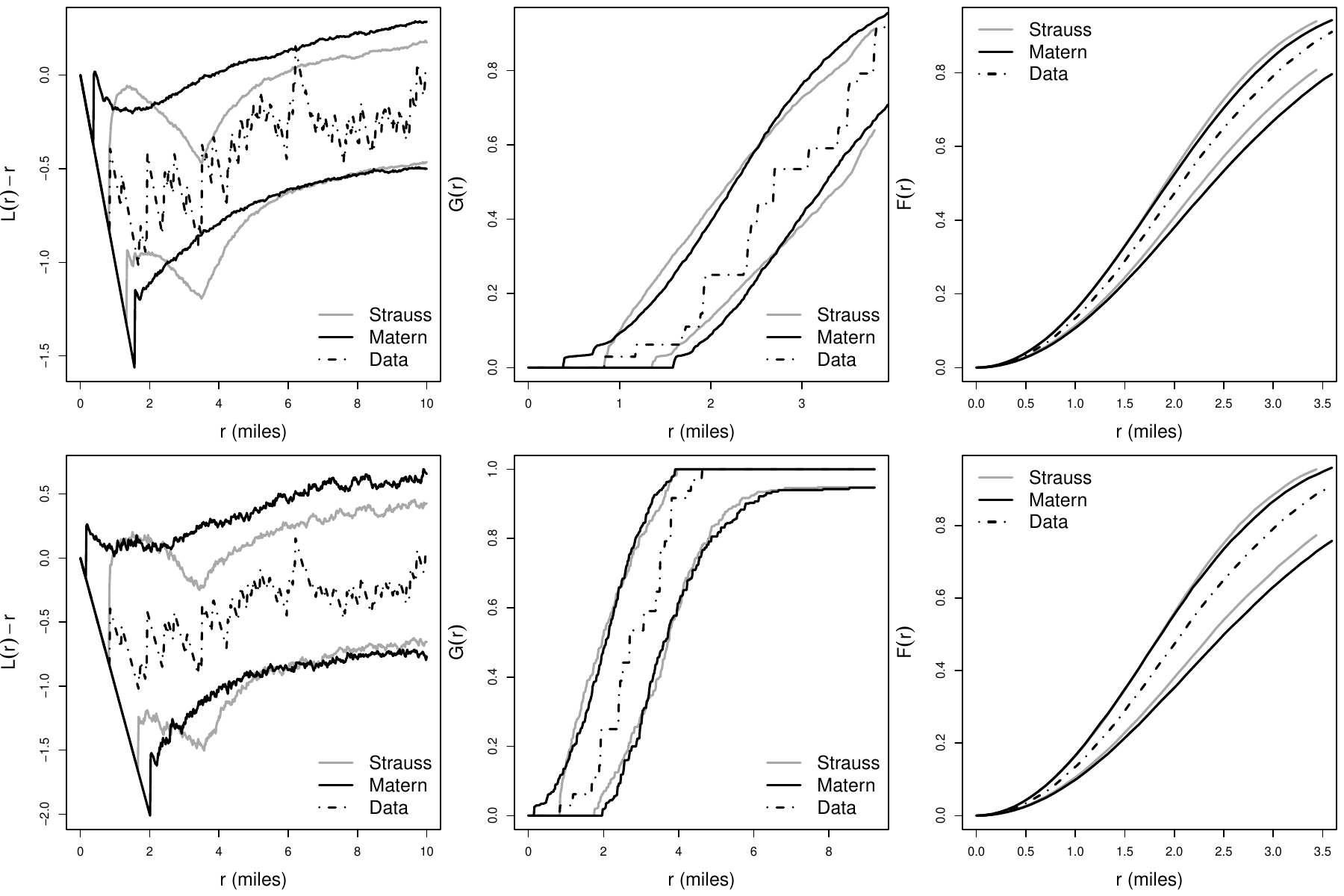}
  \caption{ Left to right: Non-parametric estimate of $L(r)-r$, $G(r)$,
    and $F(r)$ for the Spanish towns dataset with simulation based
    envelopes for both the fitted Strauss hard-core model (gray lines)
    and the fitted \wm{} model (black lines). Top row: The envelopes
    are $2.5\%$ and $97.5\%$ pointwise quantiles. Bottom row: Rank
    envelopes with a $5\%$ significance level. For both models the
    envelopes are based on 4000 simulated realizations. }
  \label{fig:spanishenvelopes}
\end{figure}

In summary the \wm{} model both has less parameters and
arguably provides a better fit than the Strauss hard-core model. Furthermore, we
have direct access to the moments of the \wm{} model such as the
intensity and the pair correlation function which can only be obtained
by simulation for the Strauss hard-core model.

\subsection{Hamster cells dataset}\label{sec:hamster}

Figure~\ref{fig:data}\subref{subfig:data:hamster} shows a plot of the
locations of 303 cells of two types in a \SI{0.25}{\milli\metre} by
\SI{0.25}{\milli\metre} region of a histological section of the kidney
of a hamster. The data has been rescaled to a unit square and there
are 226 dividing (living) cells marked by circles and 77 pyknotic
(dying) cells marked by pluses. The dataset was analysed in
\cite[][Section 6.4.1]{Diggle:03} where it was concluded to be in
agreement with independent labelling of a simple sequential inhibition
(SSI) process with hardcore distance $\delta=0.0012$. As noted by
Diggle, this model is not strictly valid, since there are a few pairs
of data points violating the hardcore condition. However, Diggle
considered the good overall fit as an indication that the SSI model,
together with random labelling of cell types, provides a reasonable
approximate description of the data.

Under the assumption of random labelling, the two sub-point patterns
consisting of respectively the dividing and the pyknotic cells
correspond to respectively the retained and thinned points of an independent
thinning of the full unmarked point pattern. Using a stationary DPP model with
kernel $C_0$ for the
full unmarked point pattern, this
implies that each individual
sub-point pattern should follow the same type of DPP model with
different values of $\rho_1$ and $\rho_2$ for the intensities (with
$\rho=\rho_1+\rho_2$), and
kernels $(\rho_1/\rho) C_0$ and $(\rho_2/\rho) C_0$, cf.\ Proposition~\ref{prop:thinning}.
We may exploit this to test the hypothesis of random labelling: first
we fit
a parametric class of DPP
models to the full unmarked point pattern, second we fit the same
model class to each of the sub-point patterns. If
random labelling is true, all fitted models should coincide, up to the
intensity.

For the full unmarked point pattern, all four of the parametric
classes of DPPs
 fit well (judged by envelopes of summary statistics not shown here) with
very similar values of the approximate likelihood. Simulation based
likelihood ratio tests cannot reject
the null hypothesis of the Gaussian model against alternatives given
by either the \wm{}, Cauchy, or power exponential spectral model. We
therefore use the simpler fitted Gaussian model with estimates
$(\hat\rho,\hat\alpha)=(303,0.0181)$ for the full unmarked point
pattern, $(\hat\rho_1,\hat\alpha_1)=(226,0.0188)$ for the
dividing cells, and $(\hat\rho_2,\hat\alpha_2)=(77,0.00816)$ for the
pyknotic cells.  The relevant
hypotheses to check for random labelling, based on the thinning
characterization explained above, is thus
$H_0$: $\alpha_1=\alpha_2=\alpha$ against $H_1$:
  $\alpha_1\neq\alpha$ or $\alpha_2\neq\alpha$.
Several test statistics can be proposed to perform this test.  We
choose to base our decision on
$\Pi=|\hat\alpha-\hat\alpha_1||\hat\alpha-\hat\alpha_2|$. The
distribution of $\Pi$ under $H_0$ is evaluated from 1000 realizations
of a Gaussian DPP model with $(\rho,\alpha)=(303,0.0181)$. On one hand,
we fit a Gaussian DPP model to each realization to get an estimate of
$\alpha$, and on the other hand, we apply an independent thinning with
retention probability $\hat\rho_1/\hat\rho=0.75$ and fit a Gaussian
DPP model to both the retained points and the thinned points to obtain
estimates of both $\alpha_1$ and $\alpha_2$. The distribution of $\Pi$
over the 1000 simulations is compared to the empirical value of $\Pi$
for the dataset. The resulting $p$-value is $0.55$
and there is no reason to reject the null hypothesis.

\subsection{Oak and beech trees dataset}\label{sec:oakbeech}

Figure~\ref{fig:data}\subref{subfig:data:oakbeech} shows a plot of the
locations of 244 trees of the species oak and beech. The dataset
originates from \cite{Pommerening:02} and was analysed in
\cite{Mecke:Stoyan:05} and
\cite{illian:penttinen:stoyan:stoyan:08}. In the following analysis we
ignore the species type. In this case \cite{Mecke:Stoyan:05} noted
that only powerful tests can reject the null hypothesis of a
homogeneous Poisson model.

Among the four parametric classes of DPPs, the \wm{}
model with $\hat{\rho}=n/|S|=0.038$, $\hat \nu=0.4$, and
$\hat\alpha=2.28$ has the highest likelihood and shows a reasonable
fit, cf.\ Figure~\ref{fig:oakbeech-summarystat}. The estimate
$\hat\nu=0.4$ could indicate that we are close to the Poisson
model. However, when we performed a simulation based likelihood-ratio
test similar to the one in Section~\ref{sec:spanish}, all 400 
simulated values of the test
statistic were below the observed value, so the null hypothesis of a
homogeneous Poisson model is clearly rejected.

\begin{figure}[!htbp]%
  \centering
  \includegraphics[width=.95\textwidth]{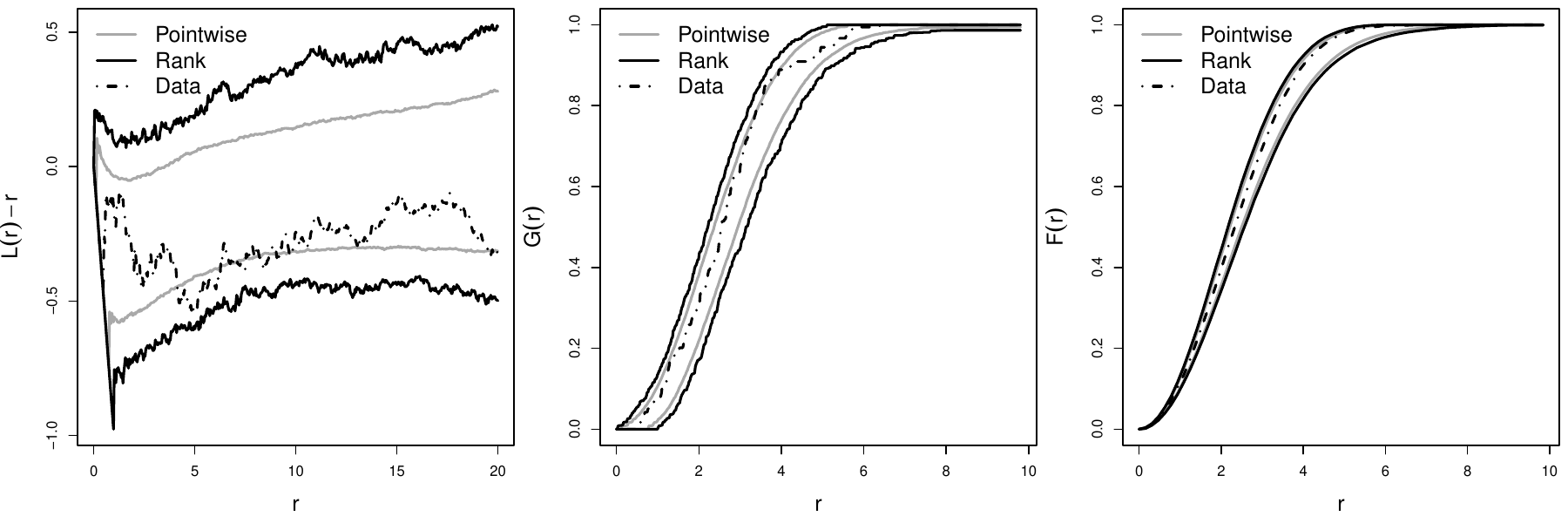}
  \caption{ Left to right: Non-parametric estimate of $L(r)-r$, $G(r)$
    and $F(r)$ for the oak and beech dataset with simulation based
    envelopes for the fitted \wm{} DPP model. The envelopes are
    $2.5\%$ and $97.5\%$ pointwise quantiles (gray lines) and rank
    envelopes with a $5\%$ significance level (black lines). Both sets
    of envelopes are based on 4000 simulated realizations.}
    \label{fig:oakbeech-summarystat}
\end{figure}

\subsection{Termite mounds dataset}\label{sec:termites}

Figure~\ref{fig:data}\subref{subfig:data:termites} shows a plot of the
locations of 48 termite mounds. The full dataset also contains an
associated pattern of palm tree locations which is omitted here. The
dataset originates from \cite{barot:etal:99} and was analysed in
\cite{illian:penttinen:stoyan:stoyan:08} where the focus was on the
interaction between palms and termite mounds. The palm locations were
modelled conditional on the mound locations assuming that the palm
locations constitute a cluster process with the mounds as cluster
centers. It may therefore be interesting to model and analyse the
locations of the mounds separately to know which process generated the
cluster centers.

When we fit the four parametric classes of DPPs, the power exponential
spectral DPP model has the highest likelihood and
an estimate $(\hat\rho,\hat\alpha,\hat\nu)=(0.00128,46.8,6.1)$. This is on the
border of the parameter space and the fitted model is close to the jinc-like DPP
with kernel \eqref{e:2trekant}. The fitted model is
judged to provide a good fit, see
Figure~\ref{fig:termites-summarystat}. We also performed a simulation based likelihood-ratio
test similar to the ones in
Sections~\ref{sec:spanish}-\ref{sec:oakbeech} based on 1000 
simulations from the fitted Gaussian model and the resulting p-value
was 2.6\%, so the null hypothesis of the simpler Gaussian model is rejected.


\begin{figure}[!htbp]%
  \centering
  \includegraphics[width=.95\textwidth]{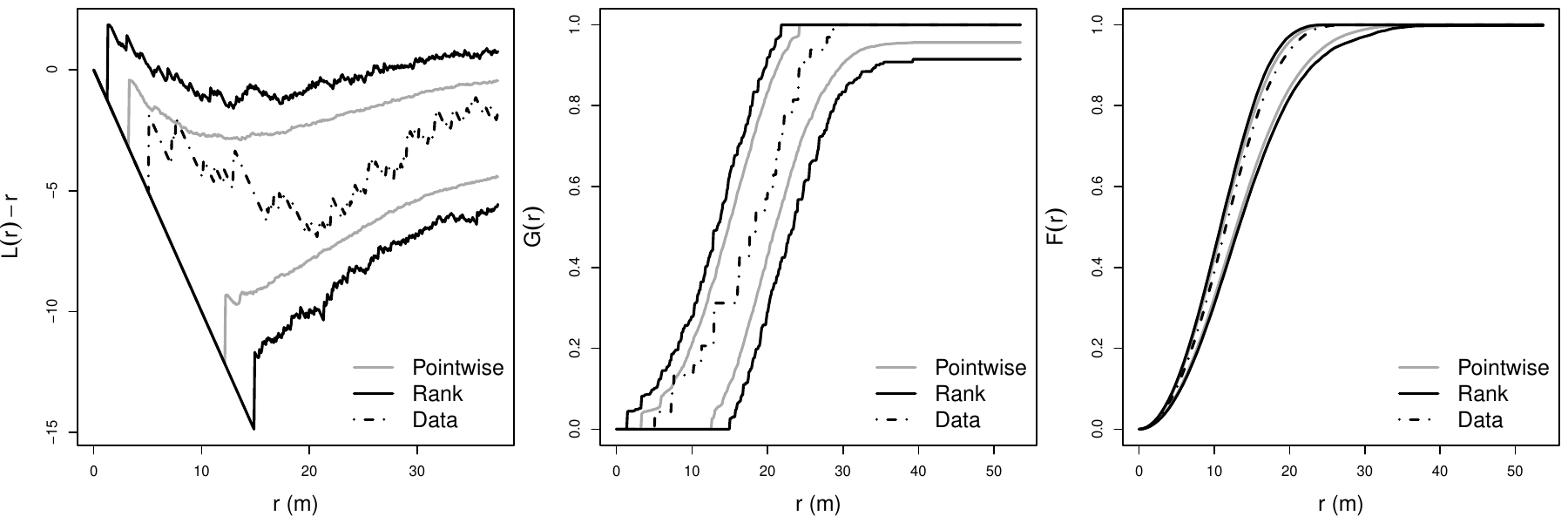}
  \caption{ Left to right: Non-parametric estimate of $L(r)-r$, $G(r)$
    and $F(r)$ for the termites dataset with simulation based
    envelopes for the fitted power exponential spectral model. The envelopes are
    $2.5\%$ and $97.5\%$ pointwise quantiles (gray lines) and rank
    envelopes with a $5\%$ significance level (black lines). Both sets
    of envelopes are based on 4000 simulated realizations.}
    \label{fig:termites-summarystat}
\end{figure}

\section{Inference for non-stationary models}\label{sec:inhom}


There are at least two different possible strategies for constructing
a non-sta\-ti\-o\-na\-ry DPP model. In Section~\ref{sec:inhom-reweighted}, we
assume the DPP is second-order intensity-reweighted stationary
\citep{baddeley:moeller:waagepetersen:97} which allows us to devise a
quite generally applicable strategy of analysis. Alternatively
we could construct a non-stationary DPP model by transforming a
stationary DPP, in which case \eqref{e:C-trans-again} gives a closed
form expression for the kernel of the non-stationary DPP. This
approach is exemplified in Section~\ref{sec:mucous}, but it is very specific to
the analysed dataset and it appears to be difficult to devise a
general strategy for modelling non-stationarity this way. In both
approaches, we exploit that the kernel (covariance function) $C$ of a
DPP can be written as
\begin{equation}\label{eq:C-R}
  C(x,y)=\sqrt{\rho(x)} R(x,y) \sqrt{\rho(y)}
\end{equation}
where $\rho$ is the intensity function and $R$ is the corresponding
correlation function to $C$.

\subsection{Second-order intensity-reweighted stationary
  models}\label{sec:inhom-reweighted}

In this section, we use the Japanese pines dataset (Figure~\ref{fig:data}\subref{subfig:data:Japanese}) from \cite{numata:64} to illustrate how
a non-stationary DPP model can be fitted to a point pattern dataset in
practice. The data consists of the locations of 204 seedlings and
saplings of Japanese black pines (\emph{Pinus Thunbergii}) in a
\SI{10}{\metre} by \SI{10}{\metre} region. The data has previously
been analysed by among others \cite{ogata:tanemura:86} using an
inhomogeneous Gibbs model where the logarithm of the first order term
was assumed to be a cubic polynomial in the Cartesian
coordinates. Since we have no access to external covariates (such as
soil quality, type of terrain, etc.) we use the Cartesian coordinates
as an artificial way of accounting for spatial heterogeneity. However,
the methodology described below works equally well for datasets with
external covariates.

Let $X$ be a DPP observed on a compact set $S$, with intensity function $\rho$
and correlation function $R$. Then $X$ is second-order
intensity-reweighted stationarity if $R$ is invariant by
translation, since this implies that $g$ is translation invariant,
cf.\ \eqref{eq:pcf}. If we further assume $R$ is isotropic, we have
$R(x,y)=R_0(\|x-y\|)$. Let $\rho_\text{b}$ be an upper bound of
$\rho(x)$, $x\in S$, and assume a stationary isotropic DPP
$\Xdom$ with kernel
\begin{equation*}
    \Cdom(x,y)=\Cdomstat(\|x-y\|)=\rho_\text{b} R_0(\|x-y\|)
\end{equation*}
is well defined. By Proposition~\ref{prop:thinning}, $X$
corresponds to an independent thinning of $\Xdom$ with retention
probability $\rho(x)/\rho_\text{b}$. A parametric model for $X$ 
can then be
constructed using the parametric models of
Sections~\ref{sec:examples}-\ref{sec:spectral} for $R_{0,\theta}$
(i.e.\ when $R_0$ is parametrized by $\theta$) and
specifying a parametric model $\rho_\psi$ for $\rho$, which possibly
depends on spatial covariates. We estimate $\psi$ by the Poisson
maximum likelihood estimator $\hat\psi$ \citep{Schoenberg05}. 
Using
$\rho_{\hat\psi}$ for reweighting, we can non-parametrically estimate
the pair correlation function
\citep{baddeley:moeller:waagepetersen:97}.
Then we estimate
$\theta$ by the minimum contrast estimate $\hat\theta$ based on the
pair correlation function $g_{0,\theta}(r)=1-|R_{0,\theta}(r)|^2$,
$r\ge0$, cf.\ Section~\ref{sec:nonlikest}.

Specifically, for the Japanese pines dataset, we
assume $\log\rho_\psi$ is a cubic polynomial in the Cartesian
coordinates. This is in close analogy with the analysis of
\cite{ogata:tanemura:86}, but we have the advantage of modelling
the intensity directly, while the intensity is unknown in their
analysis. The left panel in Figure~\ref{fig:japanese-reweighted} shows
the fitted intensity function with the points of the dataset
overlayed.
\begin{figure}[!htbp]%
  \centering
    \includegraphics[width=\textwidth]{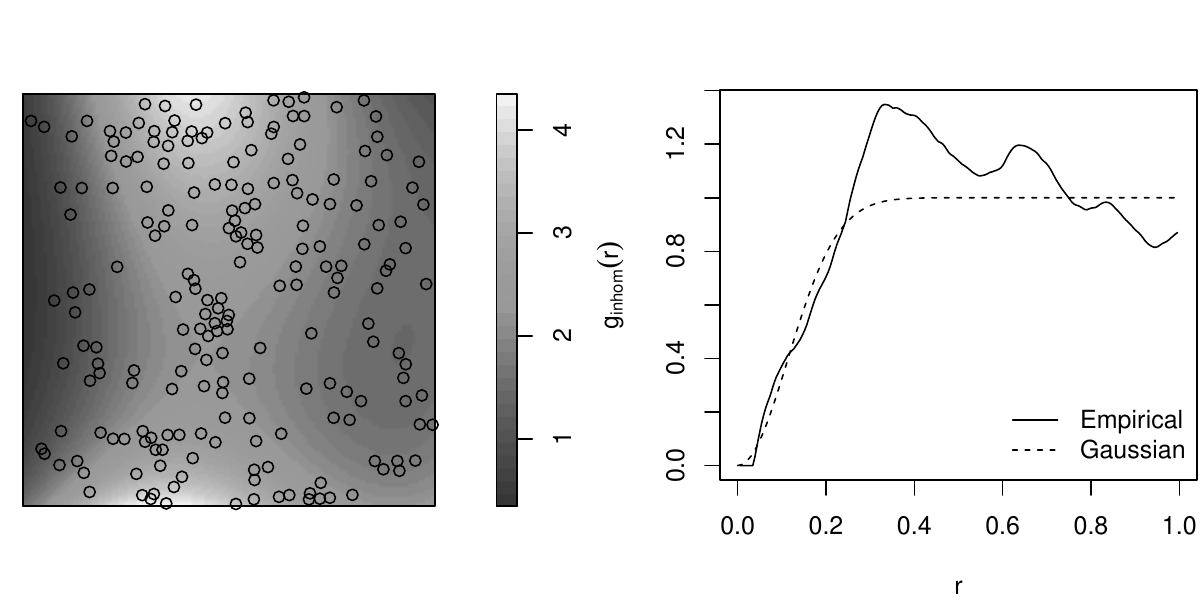}
    \caption{ Japanese pines dataset. Left: Estimated intensity
      function $\rho_{\hat\psi}$ with the points of the dataset
      overlayed. Right: Empirical pair correlation function and theoretical
      pair correlation function for the fitted Gaussian DPP model.}
  \label{fig:japanese-reweighted}
\end{figure}
Further, given $\rho_{\hat\psi}$, we define $\hat\rho_\text{b} =
\text{sup}_{x\in S}\{ \rho_{\hat\psi}(x)\}$. Then the existence
condition \eqref{e:C0-cond} for $\Xdom$ with kernel $\hat\rho_\text{b}R_{0,\theta}$
defines the valid parameter space $\Theta_{\hat\rho}$. 

Since we do not have a closed form expression for the pair correlation
function of the power exponential spectral model, we have omitted that
model from this analysis. The fitted Gaussian
 model has $\hat\alpha=0.226$ and the right panel in Figure~\ref{fig:japanese-reweighted}
 shows the fitted pair correlation function.
 The estimates for both the \wm{}
 and Cauchy models have very large values of $\nu$ indicating
 that the fitted model is close to the Gaussian model; in fact plots
 of the fitted pair correlation functions (not shown here) coincide with the one for
 the Gaussian model. Also the values of $D(\theta)$ are very similar for
 all three models. Therefore we prefer the simpler Gaussian model.

\begin{figure}[!htbp]%
  \centering
  \includegraphics[width=.95\textwidth]{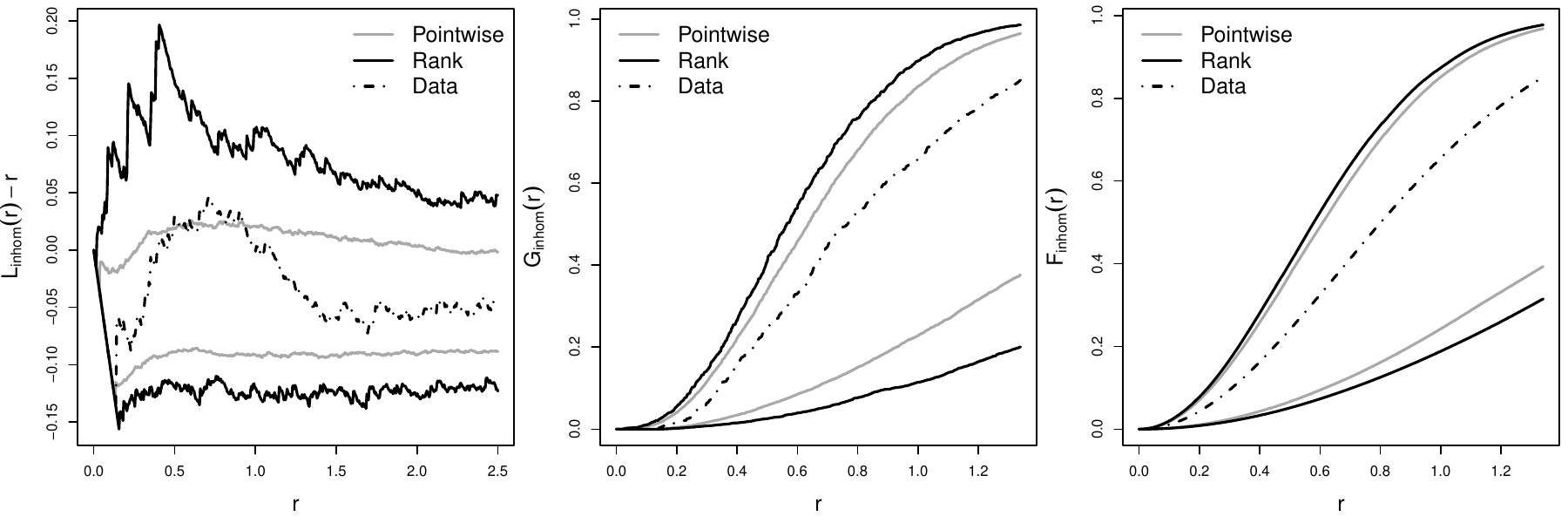}
  \caption{ Left to right: Non-parametric estimate of inhomogeneous
    versions of $L(r)-r$, $G(r)$
    and $F(r)$ for the Japanese pines dataset with simulation based
    envelopes for the fitted Gaussian DPP model. The envelopes
    are $2.5\%$ and $97.5\%$ pointwise quantiles (gray lines) and rank
    envelopes with a $5\%$ significance level (black lines). Both sets
    of envelopes are based on 4000 simulated realizations.}
  \label{fig:japaneseenvelopes}
\end{figure}

Similarly to
Figures~\ref{fig:spanishenvelopes}-\ref{fig:termites-summarystat}
 in Section~\ref{sec:hom}, 
Figure~\ref{fig:japaneseenvelopes} is used to assess the
goodness of fit for the Gaussian model. 
Here we use the inhomogeneous versions of the summary statistics, see
\cite{moeller:waagepetersen:00} and \cite{Lieshout:11}. 
Since the estimated intensity is used when estimating the summary
statistics, to ensure the validity of the envelope test,
it is important that the intensity is reestimated for each
simulation. 
Overall the model appears to provide
an acceptable fit. The estimate of $L(r)-r$ exits the
pointwise envelopes but stays within the rank envelopes, so this
deviation is not significant at the 5\% level.
 
\subsection{Inhomogeneity by transformation}\label{sec:mucous}


The mucous membrane dataset shown in
Figure~\ref{fig:data}\subref{subfig:data:mucosa} consists of the most
abundant type of cell in a bivariate point pattern analysed in
\cite{moeller:waagepetersen:00}. 
For this dataset we propose another way of handling inhomogeneity in a DPP
model. Assume $X$ is a planar DPP with kernel $C$ and observed within
a rectangular window $W=[0,A]\times[0,B]$, where
$A>0$ and $B>0$. Suppose $X_W$ has a separable intensity function: $\rho(x_1,x_2)=\rho_1(x_1)\rho_2(x_2)$,
where $\rho_1$ (resp.\ $\rho_2$) is positive and integrable on $[0,A]$
(resp.\ on $[0,B]$).  Let $T_1(x_1)=\int_0^{x_1} \rho_1(u)\,\mathrm du$,
$0\le x_1\le A$, and $T_2(x_2)=\int_0^{x_2} \rho_2(u)\,\mathrm du$, $0\le x_2\le
B$. Define $T(x_1,x_2)=(T_1(x_1), T_2(x_2))$ for $(x_1,x_2)\in W$. The transformed point process
$Y=T(X_W)$ is a DPP defined on $T(W)=[0,T_1(A)]\times[0,T_2(B)]$ and its kernel $C_Y$ is deduced
from Proposition~\ref{prop:trans}:
\begin{equation}\label{eq:C_Y}
C_{Y}(x,y) = R(T^{-1}(x), T^{-1}(y)), \quad x,y\in T(W),
\end{equation}
with $R$ as in \eqref{eq:C-R}.  For $x\in W$, $Y$ has intensity
$C_{Y}(x,x)=R(T^{-1}(x), T^{-1}(x))=1$, since
$R$ is a correlation function. Now, assume that $Y$ corresponds to
the restriction of a stationary DPP $Z$ to $T(W)$, i.e.\ $Y=Z_{T(W)}$
and for $x,y\in \mathbb R^2$, $C_Y(x,y)= C_{Y,0}(y-x)$. Thus by
\eqref{eq:C_Y}, 
for $x,y\in W$, $R(x,y) =
C_{Y,0}(T(y)-T(x))$ and the kernel of $X_W$ follows from \eqref{eq:C-R}:
\begin{equation}\label{CX}
C(x,y)=\sqrt{\rho(x)}\;  C_{Y,0}(T(y)-T(x)) \; \sqrt{\rho(y)}, \quad
x,y\in W.
\end{equation}
Note that $X_W$ has pair correlation function
$g(x,y)=1-|C_Y(T(y)-T(x))|^2$. This implies in particular that $X$ is
not second-order intensity-reweighted stationary.

In summary we fit an inhomogeneous DPP $X_W$ with separable
intensity as follows.
\begin{itemize}
\item Fit the intensity function $\rho$ restricted to $W$ (e.g.\
  by the Poisson maximum likelihood estimator).
\item Apply the transformation $T$ introduced above to obtain
  $Y=T(X_W)$.
\item Fit a stationary DPP model for $Z$ based on $Y=Z_{T(W)}$,
  and use \eqref{CX} to obtain the kernel of $X_W$.
\end{itemize}

We apply this procedure to the mucous membrane dataset, where $A=1$
and $B=0.81$. Considering
Figure~\ref{fig:data}\subref{subfig:data:mucosa} it seems reasonable
to assume horizontal homogeneity, i.e.\
$\rho(x_1,x_2)=\rho_1\rho_2(x_2)$ with $\rho_1$ a positive
constant. We simply model $\rho_2$ as piecewise constant on the nine
intervals $[0.09(i-1),0.09i)$, $i=1,\dots,9$ (though $\rho$ is then
not continuous, $X_W$ becomes a DPP according to
Definition~\ref{def1}). 
Thus $T_1$ is linear and
$T_2$ is piecewise linear making the transformation $T$ very
simple. Note that we can choose any positive value for $\rho_1$ as
this choice just amounts to rescaling $\rho_2$. We fix
$\hat\rho_1=\sqrt{n}/A$, where $n=876$ is the number of points,
whereby $T_1$ is determined.  If $T_2$ determined by an estimate of
$\rho_2$ satisfies $T_1(A)T_2(B)=n$, then $T(W)=[0,\sqrt{n}]^2$. We
therefore estimate $\rho_2$ on each interval by the frequency of
points with first coordinate in the interval divided by $0.09\sqrt
n$. This gives the estimates $34, 59, 46, 39, 34, 36, 32, 34,
16$. The left panel of 
Figure~\ref{fig:mucous-rhopcf} 
shows the fitted piecewise constant intensity $\hat\rho$. The dataset
transformed by $T$ is shown in
Figure~\ref{fig:mucous} 
and is
modelled as the restriction of a stationary DPP using each of the four
parametric classes of DPPs.

\begin{figure}[!htbp]%
  \centering
    \includegraphics[width=.45\textwidth]{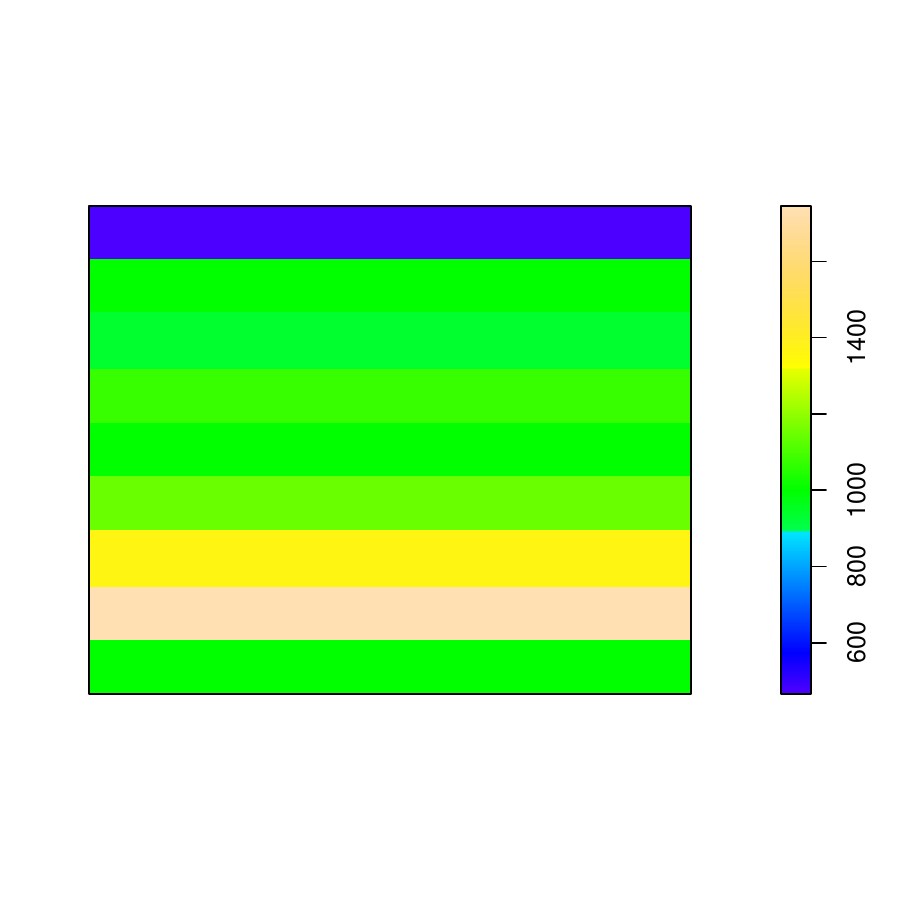}
  \quad
    \includegraphics[width=.45\textwidth]{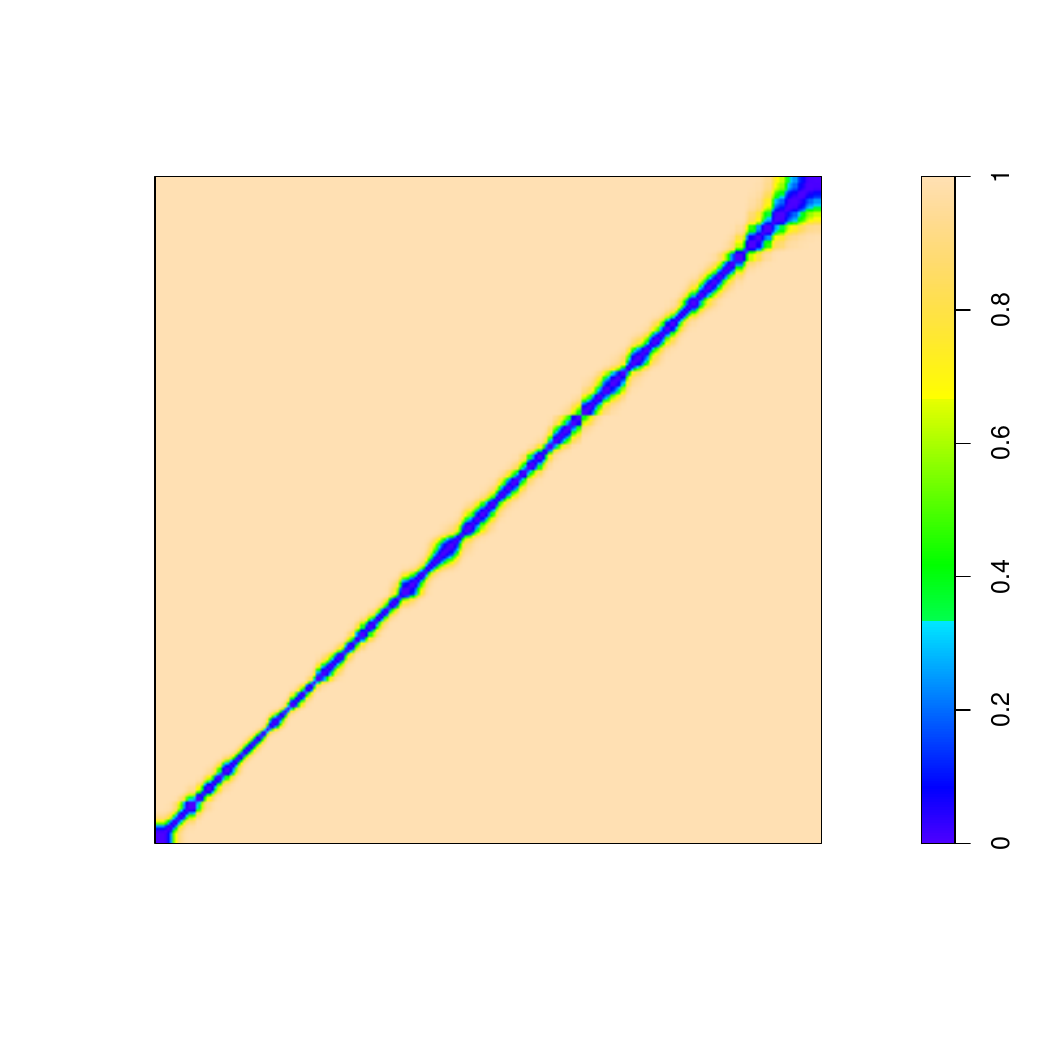}
  \caption{ Mucous membrane dataset. Left: 
    Fitted piecewise constant intensity function $\hat\rho$.
Right: Fitted pair correlation function
    with $x_2$ along the abscissa, $y_2$ along the
    ordinate, and $y_1-x_1=0$ (see the text).}
  \label{fig:mucous-rhopcf}
\end{figure}

The fitted power exponential spectral model has the highest likelihood
of the four models, but the likelihood value is only slightly larger
than for the Gaussian model. The simulated $95\%$ envelopes in
Figure~\ref{fig:mucous-summary} 
indicate that these two models are very close in terms of the
considered functional summary statistics. A simulation based
likelihood-ratio test comparing the Gaussian null model with the power
exponential spectral model yielded a $p$-value of 0.10, and we thus
prefer the Gaussian model for $Z$ with fitted parameter $\hat
\alpha=0.48$ (and intensity one as imposed by the transformation
$T$). However, the non-parametric estimate of $L(r)-r$ exits the rank
envelope, indicating a significant lack of fit at the 5\% level (the
departure from the envelope is around $r=0.25$, and may be difficult
to see in the figure).

\begin{figure}[!htbp]%
  \centering
    \includegraphics[width=.45\textwidth,bb=36 36 395 395]{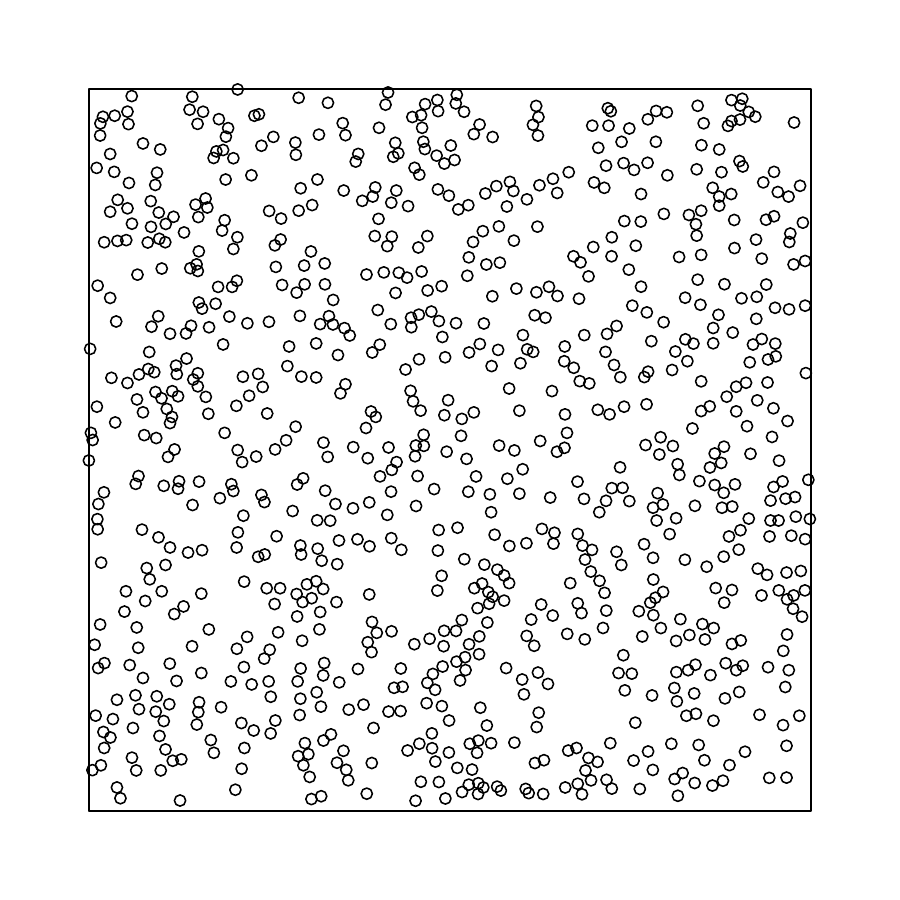}
  \caption{ 
Mucous membrane dataset in $[0,\sqrt{n}]^2$ after transformation.
  }
  \label{fig:mucous}
\end{figure}

\begin{figure}[!htbp]%
  \centering
    \includegraphics[width=.95\textwidth]{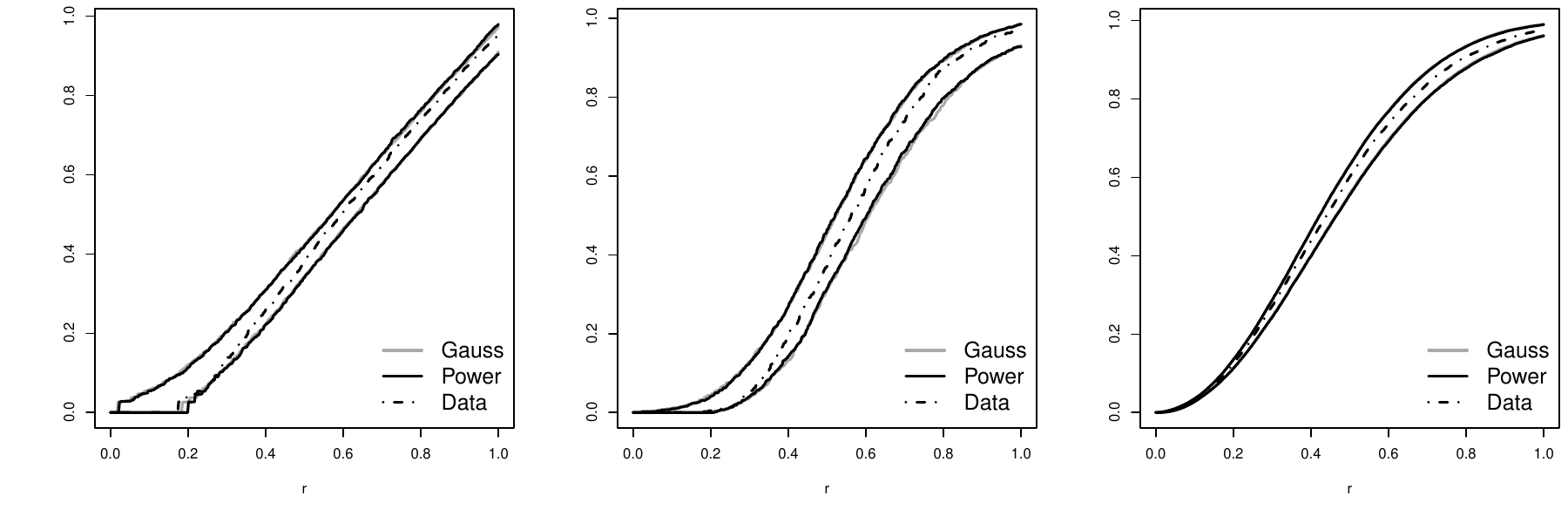}
    \caption{ Left to right: Non-parametric estimate of $L(r)-r$,
      $G(r)$ and $F(r)$ for the mucous membrane dataset with rank
      envelopes with a $5\%$ significance level for both the fitted
      Gaussian model (gray lines) and the fitted power exponential
      spectral model (black lines). Both sets of
      envelopes are based on 3000 simulated realizations.}
  \label{fig:mucous-summary}
\end{figure}

According to the third step of the procedure, we deduce the fitted
kernel \eqref{CX} of the mucous dataset and in particular its fitted
pair correlation function. The latter is hard to visualize since it is
not invariant by translation and depends on two two-dimensional
vectors $x=(x_1,x_2)$ and $y=(y_1,y_2)$. However, in our case it only
depends on $y_1-x_1$, $x_2$, and $y_2$ because $T_1$ is
linear. The right panel of
Figure~\ref{fig:mucous-rhopcf} 
shows the fitted pair correlation function as a function of
$(x_2,y_2)$ when $y_1-x_1=0$.

\section{Concluding remarks}\label{sec:conremarks}

In this paper we have
introduced several parametric
models for DPPs and discussed to which degree they can model
repulsiveness. In analogy with a zero-mean Gaussian process, the law of a DPP is
determined by a function, viz.\ the kernel (or covariance function)
$C$, which as
illustrated in our examples of applications can be chosen in many
ways. We have also
derived approximations which allow us in practice to deal with
likelihoods and simulation for DPP models, and we have demonstrated how
likelihood and moment based inference procedures work for simulated and
real point pattern datasets. In comparison to general Gibbs point
processes, DPPs are much easier to handle.

We do not think of a DPP as a mechanistic model, i.e.\ a
model which
describes a physical process generating a spatial point process
dataset. Rather our overall purpose of fitting DPP models is to
provide empirical models with a parsimonious parametrization, where we
can compare different spatial point pattern datasets by comparing
their estimated DPP model parameters, their maximized likelihoods,
their intensities and pair correlation
functions as well as other summaries. 
This is also possible when fitting parametric Poisson
process models, Poisson cluster process models, and Cox process models
(see e.g.\ \cite{moeller:waagepetersen:07} and the references
therein), however, these are not models for repulsion (or regularity
or inhibition) but rather for no interaction in the Poisson case and
clustering or aggregation in the other cases. Moreover, as mentioned,
 for Gibbs point processes it is in
general more complicated to use a maximum likelihood approach and it
is not possible to find the intensity, the pair correlation
function or other moment properties except by using time consuming simulations.

 At several places we
discussed repulsiveness of DPPs. 
However, there is a trade-off between how large the intensity
and how repulsive a stationary DPP can be, cf.\
Section~\ref{sec:examples}.
In Appendix~\ref{sec:quantify} we
suggested the quantity $\mu$ in \eqref{eq:rough}
as a rough way of quantifying repulsiveness for stationary point
processes with a pair correlation function not greater than one, but
it may
be worthwhile to consider
other ways of quantifying repulsiveness in DPPs.
In particular, we have characterized the 'most repulsive stationary DPP' with fixed intensity $\rho$, the kernel of which is a jinc-like function in the planar case, see \eqref{e:2trekant}.

DPPs cannot be as repulsive as Gibbs hard-core point processes; e.g.\ 
the cell dataset in \cite{ripley:77} is too regular to be fitted by a DPP model.
 Nevertheless, the jinc-like DPP
exhibits strong repulsiveness as seen in
Figure~\ref{fig:intro}\subref{subfig:detjinc:sim} and allows us to fit
quite regular datasets such as the termite mounds dataset, see
Section~\ref{sec:termites}.  For comparison consider a Strauss process
\citep{strauss:75,kelly:ripley:76} which is a standard example of a
repulsive Gibbs point process. Ignoring edge effects, the Strauss
process restricted to a bounded window has a density with respect to
the unit rate Poisson process which is proportional to
$\beta^{n(\bx)}\gamma^{s_R(\bx)}$, where $\beta>0$, $0\le\gamma\le1$,
$R>0$, $n(\bx)$ is the number of points in $\bx$, and $s_R(\bx)$ is
the number of (unordered) $R$-close pairs of points in $\bx$. The
Strauss process fitted by maximum pseudo-likelihood to the termite
mounds dataset (Section~\ref{sec:termites}) gives $\hat R=23.4$,
$\hat\beta=0.019$, and $\hat\gamma=0.18$. The fitted range of repulsion
($\hat R=23.4$) is in agreement with the practical range of repulsion
of the jinc-like DPP, which roughly corresponds to 
$0.96/\sqrt{\rho}$---see
Figure~\ref{fig:gengamma}\subref{fig:gengamma:pcf} 
where
$r_0=0.96/\sqrt\rho$ is the first $r$-value such that for the
jinc-like DPP, $g_0(r) = 0.99$---that is, $\hat r_0 =
0.96/\sqrt{\hat\rho}=26.8$ for the termite mounds data when the
estimate $\hat\rho=n/|S|$ is used.  Comparing the fitted Strauss
process with the fitted jinc-like DPP, not only the moments of the
jinc-like DPP can be expressible in closed form, unlike the Strauss
model, but it turns out that simulation of this jinc-like DPP is very
fast, while long Markov chain Monte Carlo simulations are needed for
this Strauss process.

\wm{} and Cauchy models with low values of $\nu$ (e.g.\ $\nu<0.5$) are
very close to Poisson, and in our experience it requires a rather
large point pattern dataset before the null hypothesis of Poisson is
rejected by a likelihood ratio test.  Such models (close to Poisson)
are difficult to estimate and simulate, since very large values of $N$
will be needed in the truncations discussed in
Section~\ref{sec:likest} in order to obtain satisfactory
approximations of $C$ and $\Ct$.

In general there is an inverse relationship between the range of
correlation and the spread of the spectral density: if $C_0$ decays
rapidly then $\varphi$ decays slowly, and if $C_0$ decays slowly then
$\varphi$ decays rapidly. This is in line with the following fact: the
(generalized) Fourier transform of the Dirac delta function (over
$\real^d$) is one and vice versa, and the Dirac function is the kernel
of the Poisson process.  From an `end user' point of view this is very
important: everything works well and is fast for DPPs except in the
less interesting cases which are close to Poisson. In such cases very
weakly repulsive Gibbs point processes (e.g.\ a Strauss process with
$\gamma$ close to one) become interesting competitors to DPPs, unless
some other and more efficient approximations of $C$ and $\Ct$ are
developed (we leave this problem for future research).


In
Section~\ref{sec:approx2} we discussed useful
approximations of $C$ and $\Ct$ restricted to
$R\times R$ when $R\subset\real^d$ is rectangular.
Frequently in the spatial point process
literature, including the present paper,
 spatial point pattern datasets observed within a
rectangular region are considered. However,
applications with non-rectangular observation windows are not
uncommon, see e.g.\ \cite{Harkness:Isham:83}. It remains to clarify
how such cases should be handled when fitting DPP models. We could
embed a non-rectangular observation window
$W$ into a rectangular region $R$ and consider the situation as a
missing data problem, since we are missing the events in $R\setminus W$, and
at least the `complete likelihood' can be handled. In our opinion this
seems a difficult approach for maximum likelihood. However, moment
based estimation and other simple alternatives to maximum
likelihood as discussed in this paper 
will easily
apply.

Large point pattern datasets may exhibit aggregation on the large
scale and repulsiveness on the small scale. For this purpose  
DPPs models which depend
on spatial covariates may be sufficient in some cases. As another possibility 
we are currently developing models for dependent thinnings of DPPs.

Generalizations of DPPs to weighted DPPs, which also are models for
repulsion, and to the closely related permanental and weighted
permanental point processes, which are models for attraction, are
studied in \cite{shirai:takahash:03} and
\cite{McCullagh:Moeller:06}. Since determinants have a geometric
meaning, are multiplicative, and there are algorithms for fast
computations, DPPs are much easier to deal with, not at least from a
statistical and computational perspective.  The approximations of $C$
and $\Ct$ using a Fourier basis approach (Section~\ref{sec:approx2})
apply as well for weighted DPPs and weighted permanental point
processes, but the practical usefulness of the approximations is yet
unexplored in these cases.

 Though a DPP is  broadly speaking a special case of a Gibbs point process, we
are not aware of a simple Hammersley-Clifford-Ripley-Kelly
representation (see \cite{ripley:kelly:77}) in terms of a product of interaction
functions (or, using the terminology of statistical physics, a sum of
potentials). Gibbsianness of
DPPs has been studied in
\cite{Georgii:Yoo:05}, see
Remark~\ref{rem:Panpangelou}. In our opinion the
Markovian properties
of DPPs is still an interesting area of research.

 DPPs models as studied in this may be extended to other spaces and
 settings. In an ongoing project we are studying DPPs on the
 sphere. Other interesting cases are multivariate and marked DPPs, DPP
 models for preferential sampling \citep{Diggle10}, incompletely observed DPPs
 \citep{Chakraborty11}, and not at least space-time DPPs. 
In the continuous time case of a space-time DPP, formally
we are just dealing with a DPP defined on $\real\times\real^d$
(where $\real$ is considered to be the time axis),
but the natural direction on the time axis should be taken into
consideration when
developing parametric families of
 space-time covariance functions and understanding how they can be used
 for modelling repulsion between events
 in time or space or both time and space. 
One possibility for building space-time covariance functions
is a spectral approach \citep{Stein05}.
Also
the development of statistical inference procedures for such
models is a challenge.
Recently, in a discrete time setting,
\cite{Affandi:Kulesza:Fox:12}
have constructed a Markov chain of DPPs with a finite state space.
It would be interesting to
study a similar Markov chain
construction for our case with state space $\real^d$.

\subsection*{Acknowledgments}

We are grateful to Philippe Carmona, Morten Nielsen, and Rasmus
Waagepetersen for helpful comments and to Adrian Baddeley for
supplying the Japanese pines dataset, which Yosihiko
Ogata and Masaharu Tanemura kindly granted us permission to
use. Supported by the Danish Council for Independent Research | Natural
Sciences, grant 09-072331, "Point Process Modelling and Statistical
Inference", and grant 12-124675, "Mathematical and Statistical
Analysis of Spatial Data".  Supported by the Centre for Stochastic
Geometry and Advanced Bioimaging, funded by a grant from the Villum
Foundation.

\appendix
\section*{Appendices}

\section{Smooth transformations and
independent thinning of DPPs}\label{sec:trans-thin}

\begin{proposition}\label{prop:trans}
Let $B,U\subseteq\real^d$ be Borel sets and 
$T:B\to U$ a
diffeomorphism such that its inverse $T^{-1}$
has a non-zero Jacobian determinant $J_{T^{-1}}(x)$
for all $x\in U$. If $X_1\sim\detproc_B(C_1)$ and $X_2=T(X_1)$, then
$X_2\sim\detproc_U(C_2)$ with
\begin{equation}\label{e:C-trans}
C_2(x,y)=|J_{T^{-1}}(x)|^{1/2}C_1(T^{-1}(x),T^{-1}(y))
|J_{T^{-1}}(y)|^{1/2}.
\end{equation}
\end{proposition}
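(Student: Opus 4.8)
The plan is to verify directly that the transformed process $X_2=T(X_1)$ satisfies Definition~\ref{def1}, by computing its product density functions and recognizing them as determinants of the kernel $C_2$. First I would record that $X_2$ is again a simple locally finite point process on $U$: since $T$ is a diffeomorphism it is in particular a bijection, so distinct events of $X_1$ are mapped to distinct events of $X_2$ (simplicity), and since $T^{-1}$ is continuous it maps compact subsets of $U$ to compact subsets of $\real^d$, so that $X_2\cap K=T(X_1\cap T^{-1}(K))$ is finite for every compact $K\subset U$, giving local finiteness.

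The core step is to transfer the defining moment identity \eqref{e:def1} from $X_1$ to $X_2$. For a Borel function $h:U^n\to[0,\infty)$, writing each event of $X_2$ as $T(x)$ with $x\in X_1$ gives
\[
\mean\sum^{\not=}_{y_1,\ldots,y_n\in X_2} h(y_1,\ldots,y_n)
= \mean\sum^{\not=}_{x_1,\ldots,x_n\in X_1} h(T(x_1),\ldots,T(x_n)).
\]
Applying \eqref{e:def1} for $X_1$ to the function $(x_1,\ldots,x_n)\mapsto h(T(x_1),\ldots,T(x_n))$, and then performing the change of variables $y_i=T(x_i)$ with $\mathrm dx_i=|J_{T^{-1}}(y_i)|\,\mathrm dy_i$ (this is where the non-vanishing Jacobian hypothesis is used), identifies the $n$th product density of $X_2$ as
\[
\rho_2^{(n)}(y_1,\ldots,y_n)
= \rho_1^{(n)}\bigl(T^{-1}(y_1),\ldots,T^{-1}(y_n)\bigr)\prod_{i=1}^n |J_{T^{-1}}(y_i)|.
\]

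Now I would insert the determinantal form \eqref{eq:productIntensity} of $\rho_1^{(n)}$ and absorb the Jacobian factors into the matrix. Writing $d_i=|J_{T^{-1}}(y_i)|^{1/2}$ and $A_{ij}=C_1(T^{-1}(y_i),T^{-1}(y_j))$, the definition \eqref{e:C-trans} of $C_2$ says exactly that $C_2(y_i,y_j)=d_i A_{ij} d_j$, i.e.\ the matrix $[C_2](y_1,\ldots,y_n)$ equals $DAD$ with $D=\mathrm{diag}(d_1,\ldots,d_n)$. The elementary identity $\det(DAD)=(\prod_i d_i^2)\det A=\prod_i |J_{T^{-1}}(y_i)|\cdot\det A$ then yields $\det[C_2](y_1,\ldots,y_n)=\rho_2^{(n)}(y_1,\ldots,y_n)$, which is precisely the requirement of Definition~\ref{def1}; hence $X_2\sim\detproc_U(C_2)$.

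I do not expect a serious obstacle: the argument is essentially a single change of variables combined with one linear-algebra identity. The only points requiring care are the measure-theoretic bookkeeping—checking that the $\rho_2^{(n)}$ so obtained are locally integrable on $U^n$ and that the diagonal convention $\rho^{(n)}=0$ on coincidence sets is preserved (it is, since $T$ is injective)—and making sure the change of variables is invoked exactly under the stated diffeomorphism hypothesis so that $|J_{T^{-1}}|$ is well defined and positive throughout $U$.
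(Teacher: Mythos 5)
Your proof is correct and is exactly the argument the paper has in mind: the paper's own proof consists of the single line ``Follows immediately from \eqref{e:def1} and \eqref{eq:productIntensity},'' and your change of variables in the moment identity together with the identity $\det(DAD)=\bigl(\prod_i d_i^2\bigr)\det A$ is precisely the computation being compressed into that sentence. No discrepancy to report.
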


\begin{proof} Follows immediately from \eqref{e:def1}
and \eqref{eq:productIntensity}.
\end{proof}

\begin{proposition}\label{prop:thinning}
If
$X_1\sim\detproc(C_1)$ and $X_2$ is obtained as an independent thinning of
$X_1$ with retention probabilities $p(x)$, $x\in\real^d$, then
$X_2\sim\detproc(C_2)$ with $C_2(x,y)=\sqrt{p(x)}C_1(x,y)\sqrt{p(y)}$.
\end{proposition}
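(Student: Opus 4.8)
The plan is to verify Definition~\ref{def1} directly for $X_2$, that is, to compute the product density functions of the thinned process and recognize them as $\det[C_2]$. The engine is the moment identity \eqref{e:def1}, combined with the fact that an independent thinning attaches to each event $x$ of $X_1$ a retention mark $R_x$ which is Bernoulli with mean $p(x)$, these marks being mutually independent given $X_1$.

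First I would fix $n$ and a Borel function $h:(\real^d)^n\to[0,\infty)$ and rewrite a sum over pairwise distinct events of $X_2$ as a sum over pairwise distinct events of $X_1$ weighted by the product of retention indicators, using that $x\in X_2$ if and only if $x\in X_1$ and $R_x=1$:
\[
\sum_{x_1,\ldots,x_n\in X_2}^{\not=}h(x_1,\ldots,x_n)=\sum_{x_1,\ldots,x_n\in X_1}^{\not=}R_{x_1}\cdots R_{x_n}\,h(x_1,\ldots,x_n).
\]
Since the $x_i$ are pairwise distinct, their retention marks are independent given $X_1$, so $\mean[R_{x_1}\cdots R_{x_n}\mid X_1]=p(x_1)\cdots p(x_n)$. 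Taking expectations, conditioning on $X_1$ first, and then applying \eqref{e:def1} to $X_1$ with integrand $p(x_1)\cdots p(x_n)\,h(x_1,\ldots,x_n)$, I would obtain
\[
\mean\sum_{x_1,\ldots,x_n\in X_2}^{\not=}h=\int\cdots\int \rho_1^{(n)}(x_1,\ldots,x_n)\,p(x_1)\cdots p(x_n)\,h(x_1,\ldots,x_n)\,\mathrm dx_1\cdots\mathrm dx_n,
\]
which identifies the $n$th order product density of $X_2$ as $\rho_2^{(n)}(x_1,\ldots,x_n)=p(x_1)\cdots p(x_n)\,\rho_1^{(n)}(x_1,\ldots,x_n)$.

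The remaining step is purely algebraic. Substituting $\rho_1^{(n)}=\det[C_1]$ from \eqref{eq:productIntensity} and writing $D=\mathrm{diag}(\sqrt{p(x_1)},\ldots,\sqrt{p(x_n)})$, I note that $[C_2](x_1,\ldots,x_n)=D\,[C_1](x_1,\ldots,x_n)\,D$, whence
\[
\det[C_2](x_1,\ldots,x_n)=(\det D)^2\det[C_1](x_1,\ldots,x_n)=p(x_1)\cdots p(x_n)\det[C_1](x_1,\ldots,x_n)=\rho_2^{(n)}(x_1,\ldots,x_n).
\]
Thus $X_2$ has product densities $\det[C_2]$, and since $X_2$ is manifestly a well-defined simple locally finite point process, Definition~\ref{def1} gives $X_2\sim\detproc(C_2)$ with no separate existence check required.

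The only delicate point---and it is routine rather than genuinely hard---is the first step: justifying the interchange of thinning and moment computation, i.e.\ conditioning on $X_1$ and extracting the factor $p(x_1)\cdots p(x_n)$ from the conditional expectation via independence of the marks across the distinct events $x_1,\ldots,x_n$. Everything else follows immediately from the determinantal form \eqref{eq:productIntensity} and the diagonal conjugation $[C_2]=D[C_1]D$ of the kernel matrix.
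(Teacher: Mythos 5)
Your proof is correct and takes essentially the same approach as the paper, whose own proof merely introduces the independent Bernoulli field $U$ and declares the conclusion ``clear'' from \eqref{e:def1} and \eqref{eq:productIntensity}. You have simply filled in the details that the paper glosses over: the conditional-expectation computation extracting $p(x_1)\cdots p(x_n)$, and the diagonal conjugation $[C_2]=D[C_1]D$ giving $\det[C_2]=p(x_1)\cdots p(x_n)\det[C_1]$.
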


\begin{proof}
Let $U=\{U(x):x\in \real^d\}$ be a random field of independent Bernoulli
variables where $\prob(U(x)=1)=p(x)$ and $U$ is independent of
$X_1$. Then $X_2$ is distributed as $\{x\in X_1:U(x)=1\}$, so from
\eqref{e:def1} and \eqref{eq:productIntensity}
it is clear that $X_2\sim\detproc(C_2)$.
\end{proof}

\section{Proof of Theorem~\ref{thm:existence}}\label{sec:existproof}

Let the situation be as in Theorem~\ref{thm:existence}
(a slightly different result where the eigenvalues are
  strictly less than one was first given in Theorem~12 of
  \cite{Macchi:75}).
Recall that $C$ is of local trace
class if \begin{equation*}\label{e:trace}
\tr_S(C)=\sum_{k=1}^\infty |\lambda_k^S|<\infty
\quad\mbox{for all compact $S\subset\real^d$}.
\end{equation*}
We apply Theorem~4.5.5 in
  \cite{Hough:etal:09}, where $C:\real^d\times \real^d \to \complex$
  is Hermitian, locally square integrable, of local trace class, and,
  as \eqref{eq:eigenrep} may not hold on a Lebesgue nullset, that $C$
  is simply given by \eqref{eq:eigenrep}. Then existence of
  $\detproc(C)$ is equivalent to that for all compact $S\subset\real^d$,
  $0\le\lambda_k^S\le1$, $k=1,2,\ldots$. When $C$ is continuous, this
  nullset vanishes and local square integrability is satisfied.  When
  $C$ is Hermitian and non-negative definite, the eigenvalues are
  non-negative, and so continuity of $C$ implies the local trace class
  assumption, since the trace $\sum_{k=1}^\infty
  |\lambda_k^S|=\sum_{k=1}^\infty \lambda_k^S=\int_S C(x,x)\,\mathrm
  dx$ is finite. Thereby Theorem~\ref{thm:existence} follows.

\section{Reduced Palm distributions for DPPs}\label{sec:Palm}

Recall that for any simple locally finite
spatial point process $Y$ on $\real^d$ with
intensity function $\rho$,
there exist unique reduced Palm distributions
$\prob_x^!$ for
Lebesgue almost all $x\in\real^d$ with $\rho(x)>0$, which
 are determined by that
\begin{equation*}\label{e:redPalmdist}
  \mean\sum_{x\in Y}h(x,Y\setminus\{x\}) =
  \int\int\rho(x)h(x,\bx)\,\dee\prob^!_x(\bx)\,\dee x
\end{equation*}
for any non-negative Borel function $h$, where $\bx$ denotes a locally
finite subset of $\real^d$. See e.g.\
\cite{stoyan:kendall:mecke:95} and Appendix~C.2 in
\cite{moeller:waagepetersen:00}.
Intuitively,
$\prob_x^!$ is the conditional distribution of
$Y\setminus\{x\}$ given that $Y$ has an event at $x$.
When all $n$'th order
product density functions $\rho^{(n)}$ of $Y$ exist, $n=1,2,\ldots$, then
for
Lebesgue almost all $x\in\real^d$ with $\rho(x)>0$,
 $\prob_x^!$
has
 $n$'th order
product density function
\begin{equation}\label{palm-moments}
\rho^{(n)}_x(x_1,\ldots,x_n)=\rho^{(n+1)}(x,x_1,\ldots,x_n)/\rho(x)
\end{equation}
and otherwise we can take $\rho^{(n)}_x(x_1,\ldots,x_n)=0$. See
e.g.\ Lemma~6.4 in \cite{shirai:takahash:03}.

For $X\sim\detproc(C)$, using
\eqref{eq:productIntensity} it can be shown that for all $x\in\real^d$
with $C(x,x)>0$, we
can take
$\prob_x^!=\detproc(C^!_x)$ where
\[C^!_x(u,v)=\det[C](u,x;v,x)/C(x,x),\quad u,v\in\real^d,\]
and where $[C](x_1,x_2;y_1,y_2)$ is the $2\times2$ matrix with entries
$C(x_i,y_j)$, $i,j=1,2$. See Theorem~6.5 in \cite{shirai:takahash:03}
(where their condition~A is implied by
the conditions in our Theorem~2.4). Moreover, \eqref{palm-moments}
holds whenever  $C(x,x)>0$.

\section{Simulation of $M$}\label{sec:inversionM}

Let the situation be as in Section~\ref{sec:Bern}.
For $m=0,1,2,\dots$, let
\begin{equation*}
  p_m = P(M=m) = \lambda_m\prod_{i>m}(1-\lambda_i).
\end{equation*}
Note that $m'=\sup\{k\ge0:\lambda_k=1\}$ is finite, and $p_m=0$
whenever $m<m'$.
For $m\ge m'$,
the $p_m$'s can be computed using the recursion
\begin{equation*}
  p_{m'}=\prod_{k=m'+1}^\infty(1-\lambda_k) ,\quad
p_{m+1} = \frac{\lambda_{m+1}}{\lambda_{m}(1-\lambda_{m+1})} p_m,\quad m=m',m'+1,\ldots
\end{equation*}
The calculation of $p_{m'}$ may involve numerical methods.
Let $F$ denote the distribution function of $M$ and introduce
\begin{equation*}
  q_m = F(m) = \prob(M\le m) = \sum_{k=0}^m p_k.
\end{equation*}
 The inversion method for simulation of $M$ is based on the fact that
$F^{-}(U)=\min\{m:q_m\ge U\}$ is distributed as $M$ if $U$ is
uniformly distributed on $(0,1)$.

\section{Proof of Theorem~\ref{thm:algorithmWorks} and related remarks}\label{sec:algproof}

Let the situation be as in Theorem~\ref{thm:algorithmWorks}.
In the sequel, for ease of
  presentation, we ignore null sets.

We start by proving by induction that for $i=n,\dots,1$,
  \eqref{eq:p_i} is a probability density and
  $\bv(X_n),\dots,\bv(X_i)$ are linearly independent (considering
  complex scalars).

  For $i=n$ and $x\in S$, we have
  $
    p_n(x) = {\|\bv(x)\|^2}/{n} \ge 0 \text{ for all } x \in S
  $,
  and
  \begin{equation*}
    \int_S p_n(x) \dee x = \frac{1}{n} \int_S \|\bv(x)\|^2 \dee x =
    \frac{1}{n} \int_S \sum_{k=1}^n |\phi_k(x)|^2 \dee x = 1.
  \end{equation*}
  Hence $p_n$ is a probability density. Clearly, $p_n(x)=0$ whenever
  $\bv(x)={\bm 0}$, so as $X_n$ is generated from $p_n$,
  $\bv(X_n)\not={\bm 0}$ (almost surely). Thus the induction hypothesis is verified for $i=n$.

  Suppose $1\le i < n$. By the induction hypothesis,  $H_{i}$ as
  defined by \eqref{defHi} has
dimension
  $n-i$. Let $P_i$ be the matrix of the orthogonal projection from
  $\complex^n$ onto $H_i^\perp$. By \eqref{e:star}, for all $x\in S$,
$p_i(x)=\|P_i\bv(x)\|^2/i\ge0$  and
  \begin{equation}\label{eq:zerodensity}
    p_i(x) = 0 \quad \text{whenever } \bv(x)\in H_i.
  \end{equation}
  By the spectral theorem, $P_i = U\Lambda_i U^*$, where $U$ is
  unitary and $\Lambda_i$ is diagonal with the first $i$ diagonal elements
  equal to one and the rest zero. Let $u_{kj}$ denote the $(k,j)$'th entry of $U$.
  Then
  \begin{equation*}
    p_i(x)= \frac{1}{i} \bv(x)^* U \Lambda_i U^* \bv(x) =  \frac{1}{i}
    \|\Lambda_iU^*\bv(x)\|^2
  \end{equation*}
  where the $j$'th entry of $\Lambda_iU^*\bv(x) $ is
  $\sum_{k=1}^nu_{kj}\phi_k(x)$ if $j\le i$, and 0 otherwise, so
  \begin{align*}
    \int_S& p_i(x) \dee x = \frac{1}{i} \int_S \sum_{j=1}^i \sum_{k=1}^n\sum_{l=1}^n u_{kj}
    \phi_k(x) \conj{u}_{lj} \conj{\phi_l(x)} \dee x\\
    &= \frac{1}{i} \sum_{j=1}^i \sum_{k=1}^n\sum_{l=1}^n u_{kj}
    \conj{u}_{lj}\int_S \phi_k(x)\conj{\phi_l(x)} \dee x
    = \frac{1}{i} \sum_{j=1}^i \sum_{k=1}^n
    |u_{kj}|^2 \int_S |\phi_k(x)|^2
    \dee x = 1.
  \end{align*}
  Thus $p_i$ is a probability density. Finally, it follows immediately from \eqref{eq:zerodensity} and the
  induction hypothesis that $\bv(X_n),\dots,\bv(X_{i+1}),\bv(X_{i})$
  are linearly independent with probability one.

Hence, the induction
  hypothesis is verified for all $i=n,\dots,1$.

 Now, for iteration $i<n$, write $P_i=P_i(X_n,\dots,X_{i+1})$ and $H_i^\perp=H_i^\perp(X_n,\dots,X_{i+1})$ to emphasize the
  dependence on the previously generated variables. For $i=n$, set
  $P_i(X_n,\dots,X_{i+1})=I_n$ and $H_i^\perp(X_n,\dots,X_{i+1})=\complex^n$. Let
  \begin{equation*}
    \Omega=\{(x_1,\dots,x_n)\in S^n:\, \bv(x_1),\dots,\bv(x_n)
    \text{ are linearly independent}\}
  \end{equation*}
  be the support of $(X_1,\dots,X_n)$. Since
    $p_i(x) = \| P_i \bv(x) \|^2/i$, $(X_1,\dots,X_n)$ has density
  \begin{equation*}
    p(x_1,\dots,x_n) = \frac{1}{n!} \prod_{i=1}^n \|
    P_i(x_n,\dots,x_{i+1})\bv(x_i)\|^2, \quad
    (x_1,\dots,x_n)\in\Omega.
  \end{equation*}
  This product is exactly the square of the volume of
  the parallelepiped determined by the vectors
  $\bv(x_1),\dots,\bv(x_n)$, which is equal to the
  determinant of the $n\times n$ Gram matrix with $(i,j)$'th entry  $\bv(x_i)^*\bv(x_j)$,
which in turn is equal to the matrix $[K](x_1,\dots,x_n)$.
  Thus, for $(x_1,\dots,x_n)\in\Omega$,
  \begin{equation}\label{e:p-den}
    p(x_1,\dots,x_n) = \frac{1}{n!}
\det [K](x_1,\dots,x_n).
  \end{equation}
 Moreover, if $(x_1,\dots,x_n)\in S^n\setminus\Omega$, $\det[K](x_1,\dots,x_n)=|\det[\bv(x_1) \dots
  \bv(x_n)]|^2=0$. Hence \eqref{e:p-den} is valid for all
  $(x_1,\dots,x_n)\in S^n$.

  Viewing $\{X_1,\dots,X_n\}$ as a point process, the number of points
  is fixed and equal to $n$, and hence by definition of $\rho^{(n)}$
  for $\{X_1,\dots,X_n\}$,
  \begin{equation}\label{e:density-n}
    \rho^{(n)}(x_1,\ldots,x_n)=n!p(x_1,\dots,x_n) = \det
    [K](x_1,\dots,x_n), \quad (x_1,\ldots,x_n) \in S^n.
  \end{equation}
This completes the proof of Theorem~\ref{thm:algorithmWorks}.

\begin{remark}\label{rem:algorithWorks}
Let $n>0$, and define $H_n=\{{\bm 0}\}$ and for $i=n-1,\ldots,1$,
\begin{equation}\label{defHi}
   H_i =\spanspace{\bv(X_n),\ldots, \bv(X_{i+1})}=
   \left\{\sum_{j=i+1}^n \alpha_j\bv(X_j) :\, \alpha_j\in\complex
   \right\}.
\end{equation}
 With probability one, $\bv(X_n),\dots,\bv(X_i)$ are linearly
 independent,  cf.\ the proof above.
Thus, almost surely, $H_i$ is a subspace of  $\complex^n$ of dimension
$n-i$. For $i=n-1,\ldots,1$,
by the Gram-Schmidt procedure employed in Algorithm \ref{alg:sim},
$\be_1,\dots,\be_{n-i}$ is
an orthonormal basis of $H_i$. Further, for $i=n,\ldots,1$,
$ip_i(x)$ is the square
norm of the orthogonal projection  of $\bv(x)$ onto
$H_i^\perp$ (the orthogonal complement to $H_i$).
\end{remark}

\begin{remark}\label{rem:successive}
According to the previous remark,
\begin{equation}\label{e:star}
ip_i(x)=\|P_i\bv(x)\|^2
\end{equation}
where $P_i$
 is the matrix of the orthogonal projection from $\complex^n$ onto
 $H_i^\perp$. Denoting by $I_n$ the $n\times n$
identity matrix, we have for $i<n$,
\begin{equation}\label{successive} P_i= \prod_{k=n}^{i+1}
\left(I_n-\frac{\bv(x_k){\bv(x_k)}^*}{K(x_k,x_k)}\right).
\end{equation}
This provides an alternative way
 to calculate the density $p_i(x)$, where $P_i$ is obtained
 recursively. This idea was used in \cite{Scardicchio:etal:09} but, as
 noticed there, the successive multiplication of matrices leads to
 numerical instabilities. Some corrections must then be applied at
 each step to make $P_i$ a proper projection matrix when $n-i$ is
 large.  In contrast, the calculation of $p_i(x)$ in
Algorithm~\ref{alg:sim} is straightforward and
numerically stable.
 \end{remark}

\begin{remark}\label{rem:rejsampling}
  Note that for
  $x$ such that $\bv(x)\in H_i^\perp$, $p_i(x)=\|\bv(x)\|^2/i$.  Thus
  for small values of $i$, simulation of $X_i$ by rejection sampling
  with respect to a uniform density may be inefficient. However, the
  computation of $p_i(x)$ is fast, so this is not a major drawback in
  practice.  For the examples in this paper, we have just been using
  rejection sampling with a uniform instrumental
  distribution. Appendix~\ref{sec:closeupper} discusses other choices of the instrumental
  distribution.
\end{remark}

\section{Close upper bounds on the conditional distributions of
  Algorithm~1}\label{sec:closeupper}

In Remark~\ref{rem:rejsampling} we discussed rejection sampling from
the densities $p_i$, $i=n,\ldots,1$, using uniform instrumental
distributions.  For intensive simulations purposes, for each $i$, it
is desirable to construct an unnormalized instrumental density which
is larger than and close to $p_i$ as well as easy to simulate from.

To find such an unnormalized density, we first notice the following.
It follows from
Remark~\ref{rem:successive}
that  $ip_i(x)$ is the norm of a vector obtained after $n-i$
successive orthogonal projections of $\bv(x)$. These projections
commute, so that $ip_i(x)$ is lower than the norm of any projection of
$\bv(x)$ of a lower order. By \eqref{e:star} and
\eqref{successive}, if $i+1\leq k\leq n$, then
\begin{equation*} p_i(x)\leq  \frac{1}{i} \left\| \left(I_n-\frac{\bv(x_k)\bv(x_k)^*}{K(x_k,x_k)}\right) \bv(x) \right\|^2\end{equation*}
and so by \eqref{eq:projectionKernelN},
\begin{equation}\label{bound-pi}
p_i(x)\leq    \frac{1}{i}  \min_{i+1\leq k\leq n} \left(K(x,x) -
  \frac{|K(x,x_k)|^2}{K(x_k,x_k)}\right),\quad i<n.
\end{equation}
Here the right hand side is an unnormalized density, since it is a continuous
function of $x\in S$ where $S$ is compact.

The proof of the following lemma uses \eqref{bound-pi} to derive an
explicit upper bound in the
specific setting of Section~\ref{sec:stationary}, i.e.\ in the
stationary case, when $S=[-1/2,1/2]^d$, and when the eigenfunctions
are Fourier basis functions 
\[\phi_k(x)=\mathrm{e}^{2\pi \mathrm{i}  k\cdot x},\quad k\in\Z^d,\
x\in S.\]

Let $x=(x(1),\dots,x(d))\in\real^d$ and
$y=(y(1),\dots,y(d))\in\real^d$, and suppose  that
\[\{\phi_1,\ldots,\phi_n\}=\{\varphi_{j_1,\ldots,j_d}:j_1\in J_1(n_1),\ldots,j_d\in J_d(n_d)\}\]
where $\varphi_{j_1,...,j_d}(x) = \exp\left( 2\pi \mathrm i
  \sum_{k=1}^d j_k x(k) \right)$ and for $q=1,\dots,d$,
 $J_q(n_q)$ denotes some finite
subset of $\Z$ with $n_q$ elements, such that $n=\prod_{q=1}^d n_q$.
Then the projection kernel
\eqref{eq:projectionKernelN} becomes
\begin{equation}\label{proj-stat}
K(x,y)=\prod_{q=1}^d\sum_{j_q\in J_q(n_q)}
  \mathrm{e}^{2\pi\mathrm{i} j_q (x(q)-y(q))}.
\end{equation}
Moreover, for any $r\in\N$,
denote $S_q(r)=\sum_{j_q\in J_q(n_q)}  j_q^r$, and for any number $a$, define $a_+=\max(a,0)$.

\begin{lemma}\label{lem-ineq}
Let $K$ be the projection kernel \eqref{proj-stat}.
For step $i=n-1,\ldots,1$ of Algorithm \ref{alg:sim},  given the $n-i$ previous points $x_k=({x_k}(1),\dots,{x_k}(d))$, $k=i+1,\dots,n$, we have
\begin{equation}\label{e:sss}   p_i(x)\leq \frac{n}{i}\left(1- \max_{i+1\leq k\leq n} \prod_{q=1}^d \left(1-\frac{2\pi}{n_q}\;|x(q)-x_k(q)|\sqrt{n_qS_q(2)-S^2_q(1)}\right)_+\right).
\end{equation}
\end{lemma}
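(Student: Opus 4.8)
The plan is to start from the general bound \eqref{bound-pi} and exploit the product structure of the stationary Fourier kernel \eqref{proj-stat} to collapse everything onto a single one-dimensional trigonometric estimate. First I would record the diagonal and off-diagonal values of $K$. Setting $x=y$ in \eqref{proj-stat} makes each inner sum equal to $n_q$, so $K(x,x)=K(x_k,x_k)=\prod_{q=1}^d n_q=n$. Abbreviating $u_q=x(q)-x_k(q)$ and $T_q=\sum_{j_q\in J_q(n_q)}\mathrm{e}^{2\pi\mathrm{i}j_q u_q}$, we have $K(x,x_k)=\prod_{q=1}^d T_q$. Substituting into \eqref{bound-pi} and using $\min_k(1-A_k)=1-\max_k A_k$ gives
\begin{equation*}
p_i(x)\leq \frac{n}{i}\left(1-\max_{i+1\leq k\leq n}\prod_{q=1}^d \frac{|T_q|^2}{n_q^2}\right).
\end{equation*}
Hence it suffices to bound each factor $|T_q|^2/n_q^2$ below by the corresponding factor in \eqref{e:sss}: the factors are nonnegative, products preserve such inequalities, and taking the maximum over $k$ is monotone, so a per-coordinate bound propagates to the full statement.

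The heart of the argument is therefore the scalar inequality, with $m=n_q$, $J=J_q(n_q)$, and $u=u_q$, that
\begin{equation*}
\frac{|T_q|^2}{m^2}\geq \left(1-\frac{2\pi}{m}\,|u|\sqrt{m S_q(2)-S_q^2(1)}\right)_+.
\end{equation*}
I would expand $|T_q|^2=\sum_{j,l\in J}\mathrm{e}^{2\pi\mathrm{i}(j-l)u}=\sum_{j,l\in J}\cos\!\big(2\pi(j-l)u\big)$, the sine terms cancelling by antisymmetry, then apply $\cos\theta\geq 1-\theta^2/2$ termwise together with the elementary moment identity $\sum_{j,l\in J}(j-l)^2=2\big(mS_q(2)-S_q^2(1)\big)$. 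This yields $|T_q|^2\geq m^2-(2\pi u)^2\big(mS_q(2)-S_q^2(1)\big)$, that is $|T_q|^2/m^2\geq 1-a^2$ where $a=\tfrac{2\pi}{m}|u|\sqrt{mS_q(2)-S_q^2(1)}\geq0$ (the radicand being nonnegative by Cauchy--Schwarz).

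The last step converts the square into the first power demanded by \eqref{e:sss}. Since $|T_q|^2/m^2\geq0$ trivially, it is enough to verify $\max(0,1-a^2)\geq(1-a)_+$: for $0\leq a\leq1$ this reads $1-a^2=(1-a)(1+a)\geq 1-a$, and for $a>1$ both sides vanish. Combining the per-coordinate bounds, multiplying over $q=1,\dots,d$, and taking the maximum over $k$ then delivers \eqref{e:sss}. The main obstacle is precisely this middle stage: one must estimate $|T_q|^2$ rather than $|T_q|$, so that the second-moment quantity $mS_q(2)-S_q^2(1)$ emerges naturally from $\sum_{j,l}(j-l)^2$, and then observe the inequality $1-a^2\geq(1-a)_+$, which simultaneously restores the linear dependence on $|u|$ and absorbs the positive-part truncation.
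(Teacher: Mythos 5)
Your proof is correct and takes essentially the same route as the paper: both arguments start from \eqref{bound-pi}, exploit the tensor-product structure of the kernel \eqref{proj-stat} to reduce to one coordinate, establish the quadratic lower bound $|K_q(x,y)|^2\geq n_q^2-4\pi^2(x-y)^2\bigl(n_qS_q(2)-S_q^2(1)\bigr)$, and then convert it into the linear positive-part bound. The only cosmetic difference is in the middle step, where the paper expands $|K_q|^2$ as a full alternating power series while you apply $\cos\theta\geq 1-\theta^2/2$ termwise (and the paper invokes concavity where you factor $1-a^2=(1-a)(1+a)$); these are equivalent derivations of the same estimate.
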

\begin{proof}
For $x,y\in\real$, let $K_q(x,y)=\sum_{j_q\in J_q(n_q)
}\mathrm{e}^{ 2\pi \mathrm{i}  j_q (x-y)}$. An analytic expansion of $|K_q(x,y)|^2$  leads to
\begin{equation*}
|K_q(x,y)|^2=\sum_{p= 0}^\infty(-1)^p (x-y)^{2p} (2\pi)^{2p} \sum_{l=0}^{2p} \frac{(-1)^l }{l! (2p-l)!} S_q(2p-l)S_q(l).
\end{equation*}
Note that
\begin{equation*}\sum_{l=0}^{2p} \frac{(-1)^l }{l! (2p-l)!} S_q(2p-l)S_q(l)=\frac{1}{(2p)!}\sum_{(i,j) \in J^2_q(n_q)}(j-i)^{2p}\geq 0. \end{equation*}
Therefore, the function $x\rightarrow|K_q(x,y)|^2$ can be expanded into an alternate series. In particular, for any $x,y\in\real$, since $S_q(0)=n_q$,
\begin{equation*}
|K_q(x,y)|^2\geq n_q^2-4\pi^2 (x-y)^2(n_qS_q(2)-S^2_q(1)).
\end{equation*}
This lower bound is a concave function of $|x-y|$ when
\begin{equation*}
|x-y|\leq \frac{n_q}{2\pi\sqrt{n_qS_q(2)-S^2_q(1)}}
\end{equation*}
and so
\begin{equation*}
\frac{|K_q(x,y)|^2}{K_q(y,y)} =\frac{|K_q(x,y)|^2}{n_q} \geq \left(n_q- 2\pi |x-y|\sqrt{n_qS_q(2)-S^2_q(1)}\right)_+.
\end{equation*}
Combining this with \eqref{bound-pi}, we obtain \eqref{e:sss}.
\end{proof}

The upper bound in \eqref{e:sss} provides an unnormalized instrumental
 density close to
$p_i$. When $d=1$, this instrumental density is  a  stepwise linear
function, and hence it is very easy to make
simulations under the instrumental density.
When $d=2$, the instrumental density provided by
\eqref{e:sss} is
a stepwise polynomial function. One strategy is then to provide a
further upper-bound making rejection sampling feasible. In our
experience this is
not so hard for the DPP models we have considered, but since it depends much on the points
$x_{i+1},\dots,x_n$ and the particular model, it seems not easy to state a general result.

\section{Proof of Theorem~\ref{thm:density} and related
  remarks}\label{sec:proofdensitythm}

Theorem~\ref{thm:density}  was first verified in \cite{Macchi:75}. Note that
  the right hand side in \eqref{e:unconddensity} is not depending on the
  ordering of the events.  Equation~\eqref{e:unconddensity} follows
  from a longer but in principle straightforward calculation, using
  \eqref{e:imp}, \eqref{e:density-n}, and the fact that if $Y$ follows
  the homogeneous Poisson process on $S$ with unit intensity, then
\begin{equation*}
\rho^{(n)}(x_1,\ldots,x_n)=\mean f(Y\cup\{x_1,\ldots,x_n\}).
\end{equation*}
See \cite{shirai:takahash:03} and \cite{McCullagh:Moeller:06}.

\begin{remark}\label{rem:noeigenvalues}
It is possible to express $\Ct$ and $D$ in terms of $C$ without any
direct reference to the spectral representations \eqref{eq:eigenrep}
and \eqref{e:defCt}:
Let
\begin{equation}\label{e:fnat}
C_S^1(x,y)=C_S(x,y),\quad C^k_S(x,y)=\int_S C_S^{k-1}(x,z)C_S(z,y)\,\mathrm
dz,\quad x,y\in S,\ k=2,3,\ldots.
\end{equation}
Then
\begin{equation}\label{e:DD}
D=\sum_{k=1}^\infty \tr_S(C_S^k)/k
\end{equation}
and
\begin{equation}\label{eq:CtSUM}
\Ct(x,y)=\sum_{k=1}^\infty C_S^k(x,y),\quad x,y\in S.
\end{equation}
Also, as noticed in \cite{Macchi:75}, $\Ct$ is the unique solution
to the integral equation
\begin{equation*}\label{e:inteq}
\Ct(x,y)-\int_S\Ct(x,z)C(z,y)\,\mathrm dz=C(x,y),\quad x,y\in S.
\end{equation*}

\end{remark}

\begin{remark}\label{rem:Panpangelou}
The density \eqref{e:unconddensity} is
hereditary in the sense that $f(\{x_1,\ldots,x_n\})>0$ whenever
 $f(\{x_1,\ldots,x_{n+1}\})>0$.
This allows us to define the Papangelou
conditional intensity for all finite point configurations $\bx=\{x_1,\ldots,x_n\}\subset
S$ and points $u\in S\setminus\bx$ by
\[\lambda(u;\bx)={f(\bx\cup\{u\})}/{f(\bx)}={\det[\Ct](x_1,\ldots,x_n,u)}/{\det[\Ct](x_1,\ldots,x_n)}\]
(taking $0/0=0$). \cite{Georgii:Yoo:05} use this to
study the link to Gibbs point
processes, and establish the following result of statistical
interest:
for any finite point configurations $\bx\subset S$ and $\by\subset S$,
\begin{equation}\label{e:mono}
\lambda(u;\bx)\ge\lambda(u;\by)\quad \mbox{whenever
$\bx\subset\by$}
\end{equation}
and for any point $u\in S\setminus\bx$,
\begin{equation}\label{e:locstab}
\lambda(u;\bx)\le\Ct(u,u)
\end{equation}
(Theorem~3.1 in \cite{Georgii:Yoo:05}).
The
monotonicity property \eqref{e:mono} is once again confirming
the repulsiveness of a DPP, and
\eqref{e:locstab} means that $X_S$ is locally stable.  

Hence
$X_S$ can be coupled with a Poisson process $Y_S$ on $S$ with intensity
function given by $\Ct(u,u)$, $u\in S$, such that $X_S\subseteq Y_S$
(see \cite{kendall00} and \cite{moeller:waagepetersen:00}).
This coupling is such that $X_S$ is
obtained by a dependent thinning of $Y_S$ as detailed in the
abovementioned references. By considering a sequence
$S_1\subset S_2\subset\ldots$ of
compact sets such that $\real^d=\cup_n S_n$ (e.g.\ a sequence of
increasing balls whose diameters converge to infinity), and a
corresponding sequence of processes $X_n\sim\detproc(C;S_n)$ which are
coupled with a Poisson process $Y$ on $\real^d$ with intensity
function given by $\Ct(u,u)$, $u\in\real^d$, such that
$X_1\subseteq X_2\subset\ldots\subseteq Y$, we obtain that $\cup_n
X_n\subseteq Y$ follows $\detproc(C)$. In other words,
 $X$ can be realized as a dependent thinning of
the Poisson process $Y$.

Imposing certain
conditions concerning a finite range assumption on an extended
version of $\Ct$ to $\real^d$ and requiring $C$ to be small enough, it is
possible to extend the Papangelou conditional intensity for
$X_S$ to a global Papangelou conditional intensity for $X$ and hence
to derive the reduced Palm distribution of
$X$ (for details, see Proposition~3.9 in \cite{Georgii:Yoo:05}).
Unfortunately, these conditions are rather restrictive, in
particular when $d\ge2$.
\end{remark}

\section{Proof of Proposition~\ref{prop:c1c2}}\label{proofProp_spectral}
For any compact set $S\subset\real^d$, define
the integral operator $T_{S}:L^2(S)\to L^2(S)$ by
\begin{equation}\label{int-operator}
T_{S}(h)(x)=\int_S C(x,y)h(y)\,\mathrm dy,\quad h\in L^2(S),\quad x\in S.
\end{equation}
The $\{\lambda_k\}$'s and $\{\phi_k\}$'s involved in \eqref{eq:eigenrep}  correspond to the eigenvalues and eigenfunctions of $T_S$, i.e.\ for all $k$,
\begin{equation}
T_{S}(\phi_k)=\lambda_k\phi_k.
\end{equation}

For $h\in L^2(S)$, define
$h_S\in L^2(\real^d)$ by $h_S(x)=h(x)$ if $x\in S$ and $h_S(x)=0$
otherwise.
From \eqref{stat-cov},  the
integral operator  $T_S$ in  \eqref{int-operator} becomes the convolution operator given by
\[T_S(h)(x)=C_0\star h_S(x)=\int_S C_0(x-y)h(y)\,\mathrm dy,\quad x\in
S.\]

Recall that the spectrum of
 $T_S$ consists of all $\lambda\in\complex$ such that the operator
 $T_S-\lambda I_S$ is not invertible or it is invertible and unbounded
(with respect to the usual operator norm), where  $I_S$ denotes the identity operator on $L^2(S)$.

Consider the multiplicative operator $Q_\varphi$ on  $L^2(\real^d)$
associated to $\varphi$, i.e.\ $Q_\varphi(h)(x)=\varphi(x) h(x)$ for
$h\in L^2(\real^d)$. Its
restriction to  $L^2(S)$ is given by
$Q_{\varphi,S}(h)=Q_{\varphi_S}(h_S)$
 for
$h\in L^2(S)$.
Note that $T_S(h)=\mathcal F^{-1}Q_{\varphi}\mathcal F(h_S)$ for
$h\in L^2(S)$.
 Since  the
Fourier operator is a unitary operator (as
$\mathcal F\mathcal F^{-1}=\mathcal F^{-1}\mathcal F=I$ where $I$ denotes the identity operator on  $L^2(\real^d)$),
the spectrum of $T_S$ is equal to the spectrum of $Q_{\varphi_S}$, which in turn
is equal to ${\mbox{ess-im}}(\varphi_S)$
(the essential image of $\varphi_S$), see (12) in Section 8.4.3 in
\cite{birman:solomjak}. In our case, ${\mbox{ess-im}}(\varphi_S)$ is
 the closure of $\varphi(S)$. Consequently,
 (C2) is equivalent to $\varphi\leq 1$.

\section{Proof of Corollary~\ref{cor:first}}\label{proofCoro_spectral}

Assume $(i)$ in Corollary~\ref{cor:first}. Then $0\leq\varphi\leq 1$ implies that
$\int |\varphi(x)|^2\dee x \leq \int |\varphi(x)|\dee x<\infty$, i.e.\
 $\varphi\in L^2(\real^d)$, and
so by  Parseval's identity $C_0\in L^2(\real^d)$. Further,
 $C_0=\mathcal F^{-1}(\varphi)$ with $\varphi\in L^1(\real^d)$,
so $C_0$ is continuous. By Bochner's theorem, the continuity of $C_0$ and the non-negativity of $\varphi$ imply that $C_0$ is positive-definite, and so
(C1) follows from (5.1). Moreover,
 (C2) holds by Proposition 5.1. Hence $(i)$ implies $(ii)$.

Conversely, assume $(ii)$. Combining Bochner's theorem  and the fact that  $C_0$ is continuous and $C_0\in
L^2(\real^d)$,  we deduce that there exists $\varphi\in L^1(\real^d)$  such that $C_0=\mathcal
F^{-1}(\varphi)$ (see also page 104 in \cite{Yaglom:87}). By (C1), we have that $\varphi\ge 0$. The
fact that $\varphi\leq 1$ follows from Proposition 5.1. Hence $(ii)$ implies $(i)$.

\section{Quantifying and comparing repulsiveness}\label{sec:quantify}

We now discuss different criteria to quantify repulsiveness. These criteria are used to compare the DPP models  introduced in Sections~\ref{sec:examples}-\ref{sec:spectral}.

 Recall that $\rho K(r)$ is the conditional expectation of the number
of further points of $X$ in a ball of radius $r$ centred at $x$ given that
$X$ has a point at $x$. As a first criterion for repulsiveness, for two
stationary DPPs with kernels $C_1$ and $C_2$, 
 common
intensity $\rho$, and corresponding $K$-functions $K_1$ and $K_2$, we may
say that $\detproc(C_1)$ exhibits stronger repulsiveness than
$\detproc(C_2)$ if $K_1(r)\le K_2(r)$ for all
$r\ge0$. If the corresponding
 pair
 correlation functions $g_1$ and $g_2$ are isotropic, i.e.\
$g_i(x,y)=g_{i0}(\|x-y\|)$, $i=1,2$, then
\begin{equation}\label{e:K1K2}
K_1\le K_2\quad\mbox{if and only if}\quad g_{10}\le g_{20}.
\end{equation}

In this sense, within each class of the Gaussian, \wm{},
and Cauchy models introduced in Section~\ref{sec:examples}, when
$\nu$ is fixed, the degree of repulsiveness increases as $\alpha$
increases. However, the increased degree of repulsiveness comes at the
cost of a decreased maximal intensity cf.\ \eqref{eq:maxrhoGauss},
\eqref{eq:maxrhoWM}, and \eqref{eq:maxrhoCauchy}. 
Letting
$\alpha=\alpha_{\max}$ given by \eqref{eq:alphamax}, the degree of repulsiveness of both
the \wm{} and the Cauchy models grows as $\nu$
grows, and the limit is the Gaussian case, cf.\ (i)-(ii) in Section~\ref{sec:examples}.

On the other hand, the power exponential spectral model of
Section~\ref{sec:spectral} contains the Gaussian model as a special
case when $\nu=2$, and it provides examples of more and more
repulsive DPPs as $\nu$ increases from zero to infinity, cf.\
Figure~\ref{fig:gengamma}.

However, superposing the two plots in Figure~\ref{fig:pcfs} or
considering Figure~\ref{fig:compareLdiff}, the comparison between a
\wm{} model and a Cauchy model is not always possible with our 
criterion for repulsiveness based on the $K$-functions.

Instead, for any stationary point process defined on
$\real^d$, with distribution $P$, constant intensity $\rho>0$,
and pair correlation
function $g(x,y)=g(x-y)$ (with a slight abuse of notation),
 we suggest
\begin{equation}\label{eq:rough}
\mu = \rho\int [1-g(x)]\dee x
\end{equation}
as a rough
measure for repulsiveness provided the integral exists.
Denote $o$ the origin of $\real^d$ and note that
 the function $x\mapsto \rho g(o,x)=\rho g(x)$ is the intensity function
 for the reduced Palm distribution $P^!_o$ (intuitively, this is the
 conditional distribution of all remaining events when we condition on
 that $o$ is an event, cf.\ Appendix~\ref{sec:Palm}).
 Therefore, $\mu$ is the limit as
$r\rightarrow\infty$ of the difference between the expected number of
events within distance $r$ from $o$ under respectively $P$ and
$P^!_o$.   For a stationary Poisson process, $\mu=0$.  For any
stationary point process, we always have $\mu\leq 1$ (see e.g.\ (2.5)
in \cite{kuna07}). When $g\leq 1$ (as in the case of a DPP),
we clearly have $\mu\ge0$, so that $0\leq\mu\leq 1$.

Especially, for a stationary DPP,
\begin{equation*}
  \mu = \rho\int [1-g(x)]\dee x = \frac{1}{\rho} \int |C_0(x)|^2 \dee x =  \frac{1}{\rho} \int |\varphi(x)|^2 \dee x
\end{equation*}
where the second equality follows from \eqref{intensity} and \eqref{eq:pcf}, and the
last equality follows from Parseval's identity.
Using an obvious notation, we
say that $\detproc(C_1)$ is more repulsive than $\detproc(C_2)$ if
$\rho_1=\rho_2$ and $\mu_1\ge\mu_2$. In the isotropic case,
this is in agreement with our former definition of
repulsiveness: if $\rho_1=\rho_2$, then $K_1\le K_2$ implies that
$\mu_1\ge\mu_2$, cf.\ \eqref{e:K1K2}.

A stationary
DPP with intensity $\rho$ and a maximal value of $\mu$ can be specified as
follows.
Since
$0\leq\varphi(x)^2\leq\varphi(x)\leq 1$, we have $\mu=1$ if and only if $\int \varphi(x)^2 \dee x = \int
\varphi(x) \dee x= \rho$. So  $\mu$ is maximal if $\varphi$ is an
indicator function with support on a Borel subset of $\real^d$ of
volume $\rho$.
A natural choice is
\begin{equation}\label{indicator}
\varphi(x)=\begin{cases}1& \text{if }
    \|x\|\leq \tau\\ 0 &
    \text{otherwise}
\end{cases}
\end{equation}
where $\tau^d=\rho d \Gamma(d/2) / (2\pi^{d/2})$, and we refer to the
corresponding DPP as 'the most repulsive stationary DPP'. For $d=1$, $C_0$ is then
proportional to a sinc function:
\begin{equation*}\label{e:1trekant}
C_0(x)= \sin(\pi
\rho x)/(\pi x)\quad \mbox{if $d=1$}.
\end{equation*}
For $d=2$, $C_0$ is then
proportional to
the 'jinc-like' function \eqref{e:2trekant}.

As already noticed in Section~\ref{sec:spectral}, the indicator
function \eqref{indicator} corresponds to the limit of \eqref{e:527}
when $\nu$ tends to infinity. Thus the power exponential spectral
model contains the most repulsive stationary DPP as a
limiting case.  Figure~\ref{fig:gengamma} illustrates how this
limiting case is approached as $\nu$ increases. Notice in particular
the slightly oscillating nature of $g$ in
Figure~\ref{fig:gengamma}\subref{fig:gengamma:pcf} for $\nu>2$. 
For Gibbs hard-core point processes, 
oscillation in the pair correlation function is also seen, but at the
hard-core distance, the pair correlation function jumps from zero to a
value larger than one (see e.g.\
\cite{illian:penttinen:stoyan:stoyan:08}).

\section{Fourier approximation of the Whittle-Mat{\'e}rn
  covariance function}\label{matern-approx}

This appendix discusses the quality of the kernel approximation \eqref{e:stjerne} for the \wm{} model. To simplify the notation we let $\Cappstat(u)=\Capp(x,y)$ where $\Capp$ is given by \eqref{e:approximation} and $u=x-y\in\unitbox$. We thus consider the approximation $C_0(u)\approx \Cappstat(u)$ where \begin{equation}\label{e:stjernestat}
\Cappstat(u)=\sum_{k\in\Z^d}\varphi(k) \mathrm{e}^{2\pi \mathrm{i} k\cdot u},\quad u\in \unitbox.
\end{equation}

To provide an upper bound on the approximation error we need some preliminary results on $K_\nu$ (the Bessel function of the second
kind) which appears in \eqref{e:matern}.


There are several equivalent ways to define $K_\nu$.
By Equation 8.432 in \cite{GR}, for all $x>0$ and all $\nu>0$,
\begin{equation}\label{e:GR}
K_{\nu}(x)=\frac{\sqrt
    \pi}{\Gamma(\nu+\frac{1}{2})}\left(\frac{x}{2}\right)^\nu\int_1^\infty
  \mathrm{e}^{-xt}(t^2-1)^{\nu-\frac{1}{2}}\,\mathrm dt.
\end{equation}
As $x\rightarrow 0$, then
$x^{\nu}K_{\nu}(x)\rightarrow2^{\nu-1}\Gamma(\nu)$. Hence by
\eqref{e:matern}, $C_0(0)=\rho$.

The following lemma provides an upper bound and  gives an idea of the
decay rate for $K_\nu$. The inequality reduces to
an equality for $\nu=1/2$. Moreover, according to various plots (omitted in this article), the bound seems sharp when $\nu>1/2$.  We denote $\gamma=\Gamma(1+2\nu)^{-1/2\nu}$.

\begin{lemma}\label{upper-bessel}
For all $x>0$,
 \begin{equation}\label{ineq-bessel1}
K_{\nu}(x)\leq
   2^{\nu-1}\Gamma(\nu) x^{-\nu} \left(1-(1-\mathrm{e}^{-\gamma
       x})^{2\nu}\right)\quad\mbox{if $\nu\geq 1/2$}
\end{equation}
and
\begin{equation}\label{ineq-bessel2}
K_{\nu}(x)\leq  K_{1/2}(x)= \sqrt{\pi/(2x)}\,\mathrm{e}^{-x}\quad\mbox{if $\nu\leq 1/2$}.\end{equation}
\end{lemma}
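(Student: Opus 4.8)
The plan is to treat the two inequalities separately, since they call for quite different arguments.

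The second inequality, for $\nu\le 1/2$, is the easy one, and I would obtain it from monotonicity of $K_\nu$ in the order. Using the standard representation $K_\nu(x)=\int_0^\infty \mathrm{e}^{-x\cosh t}\cosh(\nu t)\,\mathrm dt$ for $x>0$, and noting that $\cosh(\nu t)$ is increasing in $\nu\ge0$ for each fixed $t\ge0$ while $\mathrm{e}^{-x\cosh t}>0$, the map $\nu\mapsto K_\nu(x)$ is increasing on $[0,\infty)$. Hence $K_\nu(x)\le K_{1/2}(x)$ whenever $0<\nu\le1/2$, and evaluating \eqref{e:GR} at $\nu=1/2$ (where $(t^2-1)^{0}=1$) gives $K_{1/2}(x)=\sqrt{\pi/(2x)}\,\mathrm{e}^{-x}$, which is \eqref{ineq-bessel2}.

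For the first inequality \eqref{ineq-bessel1} I would first recast it. Write $a=2\nu\ge1$ and $f_\nu(x)=x^\nu K_\nu(x)/(2^{\nu-1}\Gamma(\nu))$, so the claim is exactly $f_\nu(x)\le 1-(1-\mathrm{e}^{-\gamma x})^{a}$. Inserting \eqref{e:GR}, applying the Legendre duplication formula $\Gamma(\nu)\Gamma(\nu+\tfrac12)=2^{1-2\nu}\sqrt{\pi}\,\Gamma(2\nu)$, and then substituting $u=xt$, I obtain the clean representation
\[
f_\nu(x)=\frac{1}{\Gamma(a)}\int_x^\infty \mathrm{e}^{-u}(u^2-x^2)^{(a-1)/2}\,\mathrm du .
\]
Since $a\ge1$ the exponent is nonnegative, so $(u^2-x^2)^{(a-1)/2}\le u^{a-1}$ for $u\ge x\ge0$, giving $f_\nu(x)\le \Gamma(a,x)/\Gamma(a)=1-P(a,x)$, with $P(a,\cdot)$ the regularised lower incomplete gamma function. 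It then suffices to prove the scalar inequality $(1-\mathrm{e}^{-\gamma x})^{a}\le P(a,x)$ for $x\ge0$, where $\gamma=\Gamma(1+a)^{-1/a}$. The specific value of $\gamma$ is exactly what makes the two sides agree to leading order at $x=0$ (both behave like $x^a/\Gamma(1+a)$), while both tend to $1$ as $x\to\infty$; this is therefore a tangency-type inequality, and it is the main obstacle.

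To overcome it I would set $h(x)=P(a,x)-(1-\mathrm{e}^{-\gamma x})^{a}$, note $h(0)=h(\infty)=0$, and prove that $h$ is unimodal (increasing then decreasing), which with the two vanishing endpoints forces $h\ge0$. Writing $R(x)=P(a,x)$, $L(x)=(1-\mathrm{e}^{-\gamma x})^{a}$, I would study $\Lambda=\log R'-\log L'$. A direct computation gives $\Lambda'(x)=(a-1)\gamma\bigl(\tfrac1y-\tfrac1{\mathrm{e}^{y}-1}\bigr)-(1-\gamma)$ with $y=\gamma x$; since $y\mapsto \tfrac1y-\tfrac1{\mathrm{e}^{y}-1}$ is strictly decreasing (which follows from $\mathrm{e}^{y}-1=2\mathrm{e}^{y/2}\sinh(y/2)>y\,\mathrm{e}^{y/2}$) and $a-1\ge0$, the function $\Lambda$ is concave. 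Moreover $\Lambda(0^+)=0$ (the leading terms of $R'$ and $L'$ coincide, precisely by the choice of $\gamma$) and $\Lambda(x)\to-\infty$ as $x\to\infty$ (using $\gamma<1$ for $a>1$). A concave function vanishing at the left endpoint and tending to $-\infty$ is, unless it is nonpositive throughout, positive on some $(0,x_0)$ and negative on $(x_0,\infty)$; the excluded case $\Lambda\le0$ would give $h\le0$ with $h(0)=h(\infty)=0$, hence $h\equiv0$, which is false. Thus $R'-L'$ changes sign exactly once, from $+$ to $-$, so $h$ is unimodal and $h\ge0$, completing \eqref{ineq-bessel1}. The boundary case $\nu=1/2$ ($a=1$, $\gamma=1$) is degenerate and yields equality at every step, matching the remark after the statement.
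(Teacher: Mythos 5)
Your proof is correct, and it is worth recording how it relates to the paper's. For \eqref{ineq-bessel1} your reduction is identical to the paper's: after the substitution $u=xt$ in \eqref{e:GR}, your bound $(u^2-x^2)^{(a-1)/2}\le u^{a-1}$ is exactly the paper's bound $(t^2-1)^{\nu-1/2}\le t^{2\nu-1}$, and both arguments land on the same incomplete-gamma inequality, namely $(1-\mathrm{e}^{-\gamma x})^{a}\le P(a,x)$ with $a=2\nu$ and $\gamma=\Gamma(1+a)^{-1/a}$, equivalently $\Gamma(2\nu,x)\le\bigl(1-(1-\mathrm{e}^{-\gamma x})^{2\nu}\bigr)\Gamma(1+2\nu)/(2\nu)$. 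The difference is that the paper simply cites \cite{alzer:97} for this inequality, whereas you prove it from scratch: the tangency at $0$ is precisely the choice of $\gamma$, concavity of $\Lambda=\log R'-\log L'$ follows from the monotonicity of $y\mapsto 1/y-1/(\mathrm{e}^y-1)$ (your inequality $\mathrm{e}^y-1>y\,\mathrm{e}^{y/2}$ is exactly what is needed), and the single sign change of $R'-L'$ together with $h(0)=h(\infty)=0$ forces $h\ge0$. Your argument is sound; the only step stated without justification is that $h\equiv0$ ``is false'' in the excluded case, but this is immediate since $h\equiv0$ would force $R'\equiv L'$, i.e.\ $\Lambda\equiv 0$, contradicting $\Lambda\to-\infty$. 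This buys a self-contained proof at the cost of length. For \eqref{ineq-bessel2} your route genuinely diverges from the paper's: the paper stays with \eqref{e:GR}, bounds the integrand via $t^2-1>2t-2$ (the exponent $\nu-\tfrac12$ being negative reverses the inequality), and computes the resulting integral with the substitution $u=x(t-1)$; you instead invoke monotonicity of $\nu\mapsto K_\nu(x)$ through the representation $K_\nu(x)=\int_0^\infty \mathrm{e}^{-x\cosh t}\cosh(\nu t)\,\mathrm{d}t$, which is shorter and more conceptual, though it imports a second standard integral representation beyond the one the paper works with.
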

\begin{proof}
When $\nu\geq 1/2$, from \eqref{e:GR},
\begin{equation*}K_{\nu}(x)\leq\frac{\sqrt \pi}{\Gamma(\nu+\frac{1}{2})}\left(\frac{x}{2}\right)^\nu\int_1^\infty \mathrm{e}^{-xt}t^{2\nu-1}\,\mathrm{d}t=\frac{2^{-\nu}\sqrt \pi}{\Gamma(\nu+\frac{1}{2})}x^{-\nu}\Gamma(2\nu,x)\end{equation*}
where $\Gamma(2\nu,\cdot)$ denotes the incomplete Gamma function with parameter $2\nu$:
\begin{equation*} \Gamma(2\nu,x) = \int_x^\infty
  t^{2\nu-1}\mathrm{e}^{-t}\,\mathrm dt.\end{equation*}
From \cite{alzer:97} we deduce that
\begin{equation*} \Gamma(2\nu,x) \leq
  \left(1-\left(1-\mathrm{e}^{-\gamma
      x}\right)^{2\nu}\right){\Gamma(1+2\nu)}/\left({2\nu}\right)\end{equation*}
whenever $x>0$, $\nu\geq1/2$, and $0\leq\gamma\leq \Gamma(1+2\nu)^{-1/2\nu}$.
  Hence \eqref{ineq-bessel1} follows by using the relations $\Gamma(2\nu+1)=2\nu\Gamma(2\nu)$ and $\Gamma(\nu)\Gamma(\nu+1/2)=2^{1-2\nu}\sqrt\pi\Gamma(2\nu)$.

When $\nu<1/2$, using \eqref{e:GR} and the fact that $t^2-1>2t-2$ when
$t>1$, we obtain
\begin{equation*}K_{\nu}(x)\leq\sqrt\frac{\pi}{2}\frac{1}{
    \Gamma(\nu+\frac{1}{2})}x^\nu\int_1^\infty
  \mathrm{e}^{-xt}(t-1)^{\nu-\frac{1}{2}}\,\mathrm dt.\end{equation*}
Finally, making the change of variables $u=x(t-1)$, we obtain
 \eqref{ineq-bessel2}.
\end{proof}

For the Whittle-Mat{\'e}rn model,
the following Proposition~\ref{prop:inequality}  provides an error bound for the
approximation \eqref{e:stjernestat}  of $C_0(u)$ by
$\Cappstat(u)$ when $u\in\unitbox$. We let
\[\beta=\left(\alpha\sqrt d\ (\Gamma(1+2\nu)^{1/2\nu}\vee 1)
\right)^{-1}\]
\begin{equation}\label{cste}
c(\rho,\nu,\alpha,d)=\begin{cases} (4\alpha)^{1-2\nu}\rho^2 {\pi}d/{\Gamma(\nu)^2}\quad\textrm{if}\quad \nu\leq \frac{1}{2} \\ 4\nu^2 \rho^2 d\quad\textrm{if}\quad \nu\geq \frac{1}{2}\end{cases}
\end{equation}
\begin{equation}\label{error1}
\epsilon(\nu,\alpha,1)=\frac{\mathrm{e}^{-\beta}}{\beta}+\frac{2\mathrm{e}^{-\beta}}{1-\mathrm{e}^{-\beta}}\left(\frac{\mathrm{e}^{-\beta}}{\beta}+\frac{1}{1-\mathrm{e}^{-\beta}}-1\right)
\end{equation}
and  for $d\geq 2$,
\begin{equation}\label{error2}
\epsilon(\nu,\alpha,d)=\mathrm{e}^{-\beta}\left(\frac{1}{\beta}+\frac{2}{(1-\mathrm{e}^{-\beta})^2}-\frac{1}{2}\right)\left(\frac{1}{\beta}+\frac{2\mathrm{e}^{-\beta}}{1-\mathrm{e}^{-\beta}}\left(\frac{1}{\beta}+\frac{1}{1-\mathrm{e}^{-\beta}}\right)\right)^{d-1}.
\end{equation}

\begin{proposition}\label{prop:inequality}
Let $C_0$ be the Whittle-Mat{\'e}rn covariance function given by \eqref{e:matern} and let
$\Cappstat$ be the approximation  \eqref{e:stjernestat}
of $C_0$ on $\unitbox$. If  $0\le\rho\le\rho_{\max}$
where $\rho_{\max}$ is given by \eqref{eq:maxrhoWM},  then
\begin{equation}\label{e:mnb}
\int_{[-1/2,1/2]^d}|C_0(x)-C_{{\mathrm{app}},0}(x)|^2\mathrm{d}x\leq c(\rho,\nu,\alpha,d) \epsilon(\nu,\alpha,d).
\end{equation}
\end{proposition}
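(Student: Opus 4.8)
The plan is to identify $\Cappstat$ as the $\Z^d$-periodization of $C_0$ and thereby turn the approximation error into an exponentially small aliasing tail. Since $\varphi(k)$ in \eqref{eq:alphaapprox} is the Fourier transform of $C_0$ at the integer $k$, interchanging sum and integral gives $\int_S\bigl(\sum_{n\in\Z^d}C_0(u+n)\bigr)\mathrm e^{-2\pi\mathrm i k\cdot u}\dee u=\varphi(k)$ for every $k\in\Z^d$, where $S=\unitbox$; hence on $S$ the two $1$-periodic functions $\Cappstat$ and $u\mapsto\sum_{n\in\Z^d}C_0(u+n)$ have the same Fourier coefficients and therefore coincide. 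Because the \wm{} kernel \eqref{e:matern} is nonnegative, this yields the pointwise identity $\Cappstat(u)-C_0(u)=\sum_{n\in\Z^d\setminus\{0\}}C_0(u+n)\ge0$ on $S$, so that $|C_0(u)-\Cappstat(u)|$ is exactly this tail and $\int_S|C_0-\Cappstat|^2\dee u=\int_S\bigl(\sum_{n\ne0}C_0(u+n)\bigr)^2\dee u$. (The exponential decay established next makes every interchange legitimate.)

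Next I would convert Lemma~\ref{upper-bessel} into a single clean majorant $C_0(x)\le\kappa\,\mathrm e^{-\delta\|x\|/\alpha}$. For $\nu\ge1/2$, substituting \eqref{ineq-bessel1} into \eqref{e:matern} and using $1-(1-t)^{2\nu}\le2\nu t$ ($0\le t\le1$) cancels the power $\|x/\alpha\|^\nu$ and gives $C_0(x)\le2\nu\rho\,\mathrm e^{-\gamma\|x\|/\alpha}$ with $\gamma=\Gamma(1+2\nu)^{-1/2\nu}$ and $\kappa^2=4\nu^2\rho^2$. For $\nu\le1/2$, substituting \eqref{ineq-bessel2} gives $C_0(x)\le\frac{2^{1-\nu}\rho}{\Gamma(\nu)}\sqrt{\pi/2}\,(\|x\|/\alpha)^{\nu-1/2}\mathrm e^{-\|x\|/\alpha}$; since only arguments $x=u+n$ with $u\in S$, $n\ne0$ occur, we have $\|x\|\ge1/2$, and the nonpositive power $(\|x\|/\alpha)^{\nu-1/2}$ is largest at $\|x\|=1/2$, which absorbs into the constant and produces $\kappa^2=(4\alpha)^{1-2\nu}\rho^2\pi/\Gamma(\nu)^2$ with $\delta=1$. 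In either regime $\kappa^2$ equals the $d$-free part of $c(\rho,\nu,\alpha,d)$ in \eqref{cste}, and, crucially, $\delta/(\alpha\sqrt d)=\beta$ exactly, the $\vee1$ in the definition of $\beta$ selecting $\delta=\gamma$ for $\nu\ge1/2$ and $\delta=1$ for $\nu\le1/2$.

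The last step sums the majorant over the lattice and integrates over $S$. Bounding $\|x\|\ge\|x\|_1/\sqrt d$ factorizes the exponential, $\mathrm e^{-\delta\|x\|/\alpha}\le\prod_{i=1}^d\mathrm e^{-\beta|x_i|}$, so with $s(t)=\sum_{m\in\Z}\mathrm e^{-\beta|t+m|}$ (a pair of geometric series summing to $\mathrm e^{-\beta|t|}+\frac{2\mathrm e^{-\beta}}{1-\mathrm e^{-\beta}}\cosh(\beta t)$ on $[-1/2,1/2]$) we get $\sum_{n\ne0}C_0(u+n)\le\kappa\bigl(\prod_i s(u_i)-\prod_i\mathrm e^{-\beta|u_i|}\bigr)$. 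Squaring and integrating the difference of products over the product domain $\unitbox$ factorizes into one-dimensional integrals and gives the bound $\kappa^2\,(A^d-2G^d+E^d)$, where $A=\int_{-1/2}^{1/2}s^2$, $G=\int_{-1/2}^{1/2}s(u)\mathrm e^{-\beta|u|}\dee u$, and $E=\int_{-1/2}^{1/2}\mathrm e^{-2\beta|u|}\dee u$. For $d=1$ this is $\kappa^2(A-2G+E)=\kappa^2\int_{-1/2}^{1/2}(s-\mathrm e^{-\beta|u|})^2\dee u$, and evaluating the geometric-series integrals reproduces $\kappa^2\,\epsilon(\nu,\alpha,1)$ with $\epsilon(\nu,\alpha,1)$ as in \eqref{error1}. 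For $d\ge2$ I would bound $A^d-2G^d+E^d$ by telescoping $A^d-G^d$ and $G^d-E^d$ (using $E\le G\le A$ so that each of the $d$ summands is $\le A^{d-1}$); this is where the single factor $d$ and the power $A^{d-1}$ in \eqref{error2} arise, since $A$ equals the base of that power and $\kappa^2 d=c$.

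The conceptually clean steps are the periodization identity and the exponential majorant; I expect the genuine obstacle to be the constants bookkeeping in the last step: carrying out the one-dimensional geometric-series integrals $A$, $G$, $E$ in closed form and bounding $\frac1d(A^d-2G^d+E^d)$ precisely so that the resulting constant matches the closed forms $\epsilon(\nu,\alpha,1)$ and $\epsilon(\nu,\alpha,d)$ in \eqref{error1}-\eqref{error2} rather than merely up to harmless constants, and verifying that the $\nu\le1/2$ absorption of $(\|x\|/\alpha)^{\nu-1/2}$ at $\|x\|=1/2$ is exactly what generates the $(4\alpha)^{1-2\nu}$ prefactor in \eqref{cste}.
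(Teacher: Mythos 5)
Your proposal is correct, and it reaches exactly the paper's constants, but the reduction to a lattice sum is organized differently. The paper works in the frequency domain: by Parseval, $\int_S|C_0-\Cappstat|^2=\sum_{k\in\Z^d}(\alpha_k-\varphi(k))^2$, the coefficient differences are recognized as $\mathcal F(h)(k)$ with $h=C_0\,\mathbb 1_{\real^d\setminus S}$, the squares are rewritten as $\mathcal F(h\conv h)(k)$, and Poisson summation converts this into $\sum_{k\in\Z^d}h\conv h(k)$. You instead stay in the spatial domain: the periodization identity $\Cappstat(u)=\sum_{n\in\Z^d}C_0(u+n)$ gives the pointwise, nonnegative aliasing error $\sum_{n\neq 0}C_0(u+n)$, whose square you integrate over $S$. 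These two lattice quantities are in fact identical (unfold $h\conv h(k)$ over unit cells and use the symmetry of $C_0$), so the two proofs diverge only in bookkeeping thereafter: your one-dimensional quantities $A$, $G$, $E$ satisfy $A=\sum_{n\in\Z}\mathrm e^{-\beta|n|}(|n|+1/\beta)$ and $A-G=\sum_{m\in\Z}\sum_{n\neq0}\int_{-1/2}^{1/2}\mathrm e^{-\beta(|t+m|+|t+n|)}\dee t$, which are precisely the paper's unrestricted and restricted coordinate sums, and your telescoping bound $A^d-2G^d+E^d\le d(A-G)A^{d-1}$ produces the same factor $d$ and power $A^{d-1}$ that the paper obtains from a union bound over which coordinate satisfies $|x_j-y_j|>1/2$. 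Two details confirm the match: (i) your $\nu$-dependent majorants from Lemma~\ref{upper-bessel} give exactly $\kappa^2 d=c(\rho,\nu,\alpha,d)$ in \eqref{cste}, including the $(4\alpha)^{1-2\nu}$ prefactor from evaluating $(\|x\|/\alpha)^{\nu-1/2}$ at $\|x\|=1/2$; (ii) to land on the exact closed forms \eqref{error1}--\eqref{error2} rather than the raw geometric sums, both routes need the same elementary inequality $1/(1-\mathrm e^{-\beta})\ge 1/\beta+1/2$ in the $d\ge2$ case. What your route buys is conceptual economy: no autoconvolution, no appeal to the Poisson summation formula for $h\conv h$ (the paper cites Stein--Weiss for this), and a manifestly nonnegative error; what it costs is having to justify the periodization identity pointwise, which your exponential-decay remark covers since both sides are continuous periodic functions with the same Fourier coefficients.
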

\begin{proof} We have
\begin{align*}
\int_{[-1/2,1/2]^d}|C_0(x)-C_{{\mathrm{app}},0}(x)|^2 \,\mathrm{d}x
&=\int_{[-1/2,1/2]^d}\left| \sum_{k\in\Z^d} (\alpha_k-\varphi(k))
  \mathrm{e}^{2\pi\mathrm i x\cdot k}\right|^2 \mathrm{d}x \\ &= \sum_{k\in\Z^d} (\alpha_k-\varphi(k))^2
\end{align*}
with
\begin{equation*}
\varphi(k)-\alpha_k=\int_{\real^d\setminus[-1/2,1/2]^d}
C_0(y)\mathrm{e}^{-2\pi\mathrm i k\cdot y}\,\mathrm dy.
\end{equation*}
Defining $h(y)= C_0(y) (1-1\!\!1_{[-1/2,1/2]^d}(y))$, we have $\varphi(k)-\alpha_k=\mathcal F(h)(k)$ and
\begin{equation*}
\sum_{k\in\Z^d} (\alpha_k-\varphi(k))^2=\sum_{k\in\Z^d} \left(\mathcal F(h)(k)\right)^2=\sum_{k\in\Z^d}  \mathcal F(h\star h)(k).
\end{equation*}
The Poisson summation
formula on a lattice (see
\cite{stein-weiss}, Chapter~VII, Corollary~2.6) gives
\begin{equation*}
 \sum_{k\in\Z^d}  \mathcal F(h\star h)(k) =  \sum_{k\in\Z^d} h\star h (k).
 \end{equation*}
When $\nu\geq 1/2$, we have $1-(1-\mathrm{e}^{-\gamma
    x})^{2\nu}\leq 2\nu \mathrm{e}^{-\gamma x}$ for all $x>0$,
so   from \eqref{e:matern} and \eqref{ineq-bessel1},
\begin{equation}\label{convol}
h\star h (x)=\int_{\mathcal D} C_0(y)C_0(x-y)\, \mathrm{d}y \leq
4\rho^2\nu^2 \int_{\mathcal D}
\mathrm{e}^{-\frac{\gamma}{\alpha}(\|y\|+\|x-y\|)}\,\mathrm{d}y,\quad x\in\real^d,
 \end{equation}
where $\mathcal D=\mathcal D(x)=\{y \in\real^d:\|x-y\|_{\infty}>1/2,
\|y\|_{\infty}>1/2\}$ and $\|\cdot\|_\infty$ denotes the uniform
norm.

Suppose that $\nu\geq 1/2$.
When $d=1$, the latest integral in \eqref{convol} can be computed easily to get
\begin{equation*}
\int_{|y|>\frac{1}{2},|x-y|>\frac{1}{2}} \mathrm{e}^{-\frac{\gamma}{\alpha}(|y|+|x-y|)}\mathrm{d}y=\mathrm{e}^{-\frac{\gamma}{\alpha}|x|}\left(\frac{\alpha}{\gamma}\mathrm{e}^{-\frac{\gamma}{\alpha}}+|x|-1\right)
\end{equation*}
if $|x|\geq1$, and the value of the integral  at $x=0$ is
$\frac{\alpha}{\gamma} \mathrm{e}^{-\frac{\gamma}{\alpha}}$. Thereby, when $d=1$,
\begin{equation*}
\sum_{k\in\Z^d} (\alpha_k-\varphi(k))^2=\sum_{k\in\Z^d} h\star h (k)\leq 4\rho^2\nu^2\left[\frac{\alpha}{\gamma} \mathrm{e}^{-\frac{\gamma}{\alpha}}+2\sum_{k=1}^\infty \mathrm{e}^{-\frac{\gamma}{\alpha}k}\left(\frac{\alpha}{\gamma}\mathrm{e}^{-\frac{\gamma}{\alpha}}+k-1\right)\right]
\end{equation*}
and \eqref{e:mnb}, which involves the terms \eqref{cste} (for $\nu\geq
1/2$) and \eqref{error1}, follows from the
expansion  \[\sum_{k=1}^\infty (a+k)q^k=\frac{q}{1-q}
\left(a+\frac{1}{1-q}\right)\quad \mbox{for any $a\in\real$ and $|q|<1$.}\]
When $d\geq 2$, the integral in \eqref{convol} is more difficult to
compute and we therefore establish an upper bound as follows.
Since $\|y\|\geq(|y_1|+\dots +|y_d|)/\sqrt d$,
 \begin{align*}
h\star h (x)&\leq 4\rho^2\nu^2 \int_{\mathcal D} \prod_{j=1}^d \mathrm{e}^{-\frac{\gamma}{\alpha\sqrt d}(|y_j|+|x_j-y_j|)}\,\mathrm{d}y_j\\
&\leq 4\rho^2\nu^2 d\int_{|y_1-x_1|>\frac{1}{2}}\mathrm{e}^{-\frac{\gamma}{\alpha\sqrt d}(|y_1|+|x_1-y_1|)}\,\mathrm{d}y_1\prod_{j=2}^{d} \int_{\real}  \mathrm{e}^{-\frac{\gamma}{\alpha\sqrt d}(|y|+|x_j-y|)}\,\mathrm{d}y.
 \end{align*}
These integrals are computable: for any $\beta>0$,
 \begin{equation*}
 \int_{|y_1-x_1|>\frac{1}{2}}\mathrm{e}^{-\beta(|y|+|x_1-y|)}\,\mathrm{d}y=\begin{cases} \frac{\mathrm{e}^{-\beta}}{\beta}\textrm{cosh}(\beta x_1)\quad\textrm{if}\quad|x_1|\leq \frac{1}{2},\\ \mathrm{e}^{-\beta|x_1|}\left(\frac{1-\mathrm{e}^{-\beta}}{2\beta}+|x_1|-\frac{1}{2}\right)\quad\textrm{if}\quad|x_1|\geq \frac{1}{2}\end{cases}
 \end{equation*}
 and
 \begin{equation*}
 \int_{\real} \mathrm{e}^{-\beta(|y|+|x_j-y|)}\,\mathrm{d}y= \mathrm{e}^{-\beta|x_j|}\left(|x_j|+\frac{1}{\beta}\right).
 \end{equation*}
 Therefore, when $d\geq 2$, setting $\beta=\gamma/(\alpha\sqrt d)$,
 \begin{multline*}
\sum_{k\in\Z^d} (\alpha_k-\varphi(k))^2\\\leq 4\rho^2\nu^2 d \left( \frac{\mathrm{e}^{-\beta}}{\beta}+2\sum_{k=1}^{\infty}  \mathrm{e}^{-\beta k}\left(\frac{1-\mathrm{e}^{-\beta}}{2\beta}+k-\frac{1}{2}\right)\right)\left(\sum_{k\in\Z} \mathrm{e}^{-\beta|k|}\left(|k|+\frac{1}{\beta}\right)\right)^{d-1}
  \end{multline*}
and the bound \eqref{e:mnb}, which involves the term \eqref{error2},
follows after a straightforward calculation.

Suppose that $\nu\leq 1/2$. From \eqref{ineq-bessel2} we deduce
 \begin{equation*}
h\star h (x)\leq \frac{\rho^2}{\Gamma(\nu)^2}2^{2-2\nu}\frac{\pi}{2}\int_{\mathcal D} \| y/\alpha\|^{\nu-\frac{1}{2}} \| (x- y)/\alpha\|^{\nu-\frac{1}{2}} \mathrm{e}^{-\frac{1}{\alpha}(\|y\|+\| x-y\|)}\mathrm{d}y.
 \end{equation*}
If $\| x\|_\infty>1/2$, then $\|x\|>1/2$, and so $\|x\|^{\nu-1/2}<2^{1/2-\nu}$ and
\begin{equation*}
h\star h (x)\leq \frac{\rho^2}{\Gamma(\nu)^2} 2^{2-4\nu}\alpha^{1-2\nu}\pi\int_{\mathcal D}\mathrm{e}^{-\frac{1}{\alpha}(\|y\|+\| x-y\|)}\,\mathrm{d}y.
 \end{equation*}
The latter integral may be bounded similarly as the one in
\eqref{convol}, and thereby \eqref{e:mnb}, which involves the term
\eqref{cste},  follows.
\end{proof}

Note that
the inequality \eqref{e:mnb} reduces to an equality in the
particular case $d=1$ and $\nu=1/2$. Finally, the plots in
Figure~\ref{fig:error}
confirm that for reasonable values of $\rho$, $\nu$, and $\alpha$
satisfying \eqref{eq:maxrhoWM}, the error bound \eqref{e:mnb} is
small.

  \begin{figure}[htbp]
    \setlength{\tabcolsep}{0cm} \centerline{
      \begin{tabular}[]{ccc}
  \includegraphics[angle=0,scale=.4]{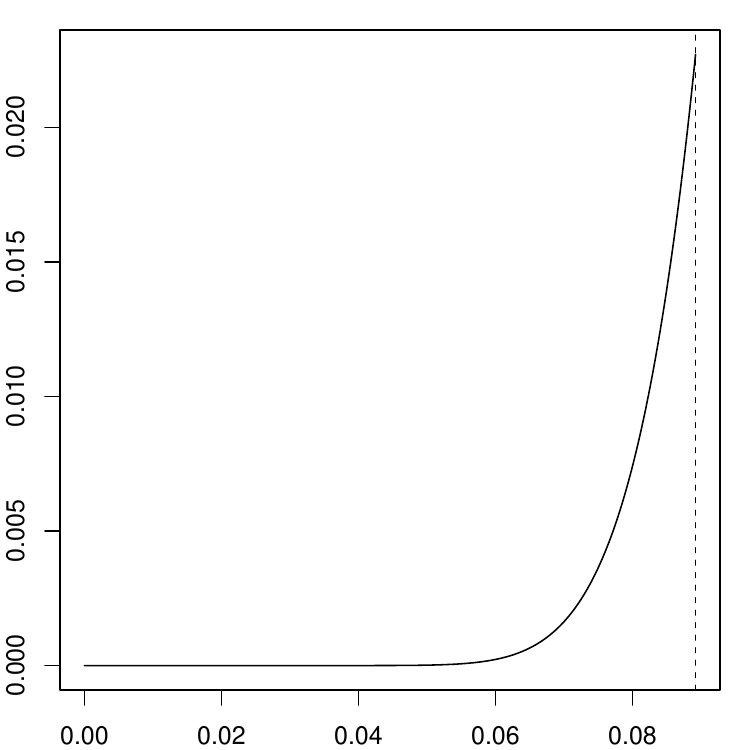} &
  \includegraphics[angle=0,scale=.4]{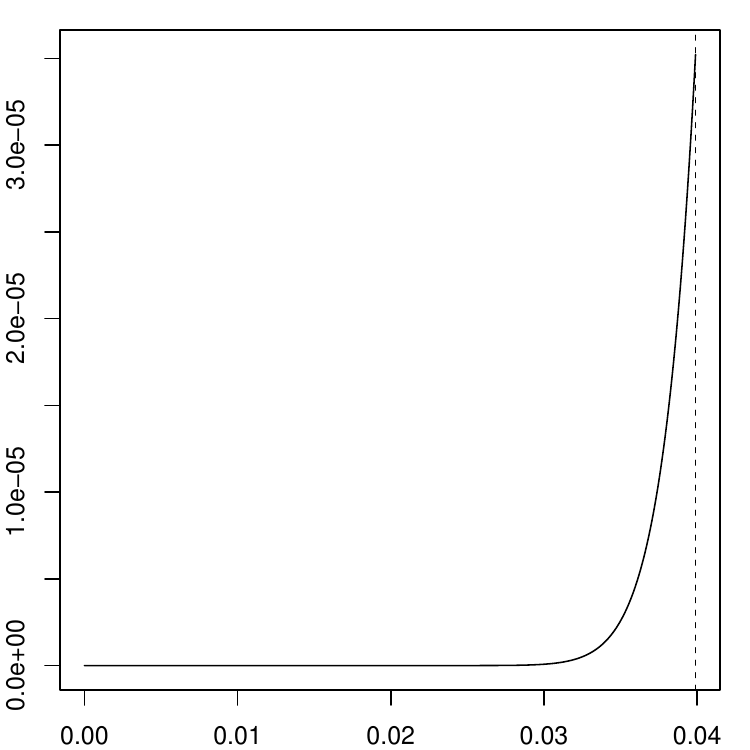} &
   \includegraphics[angle=0,scale=.4]{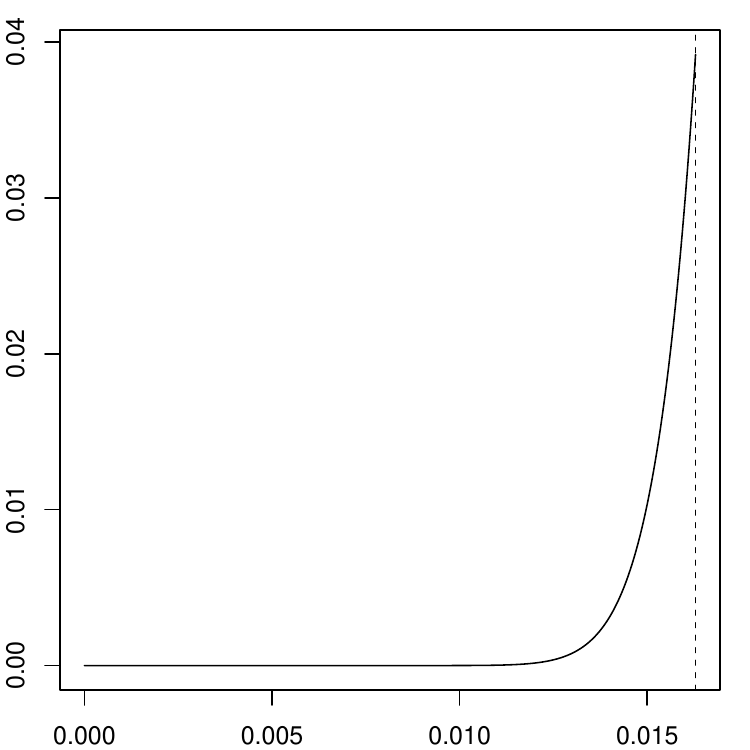} \\
  \footnotesize {$\rho=100$, $\nu=0.1$} &
  \footnotesize {$\rho=100$, $\nu=0.5$} & \footnotesize {$\rho=100$, $\nu=3$} \\
  \includegraphics[angle=0,scale=.4]{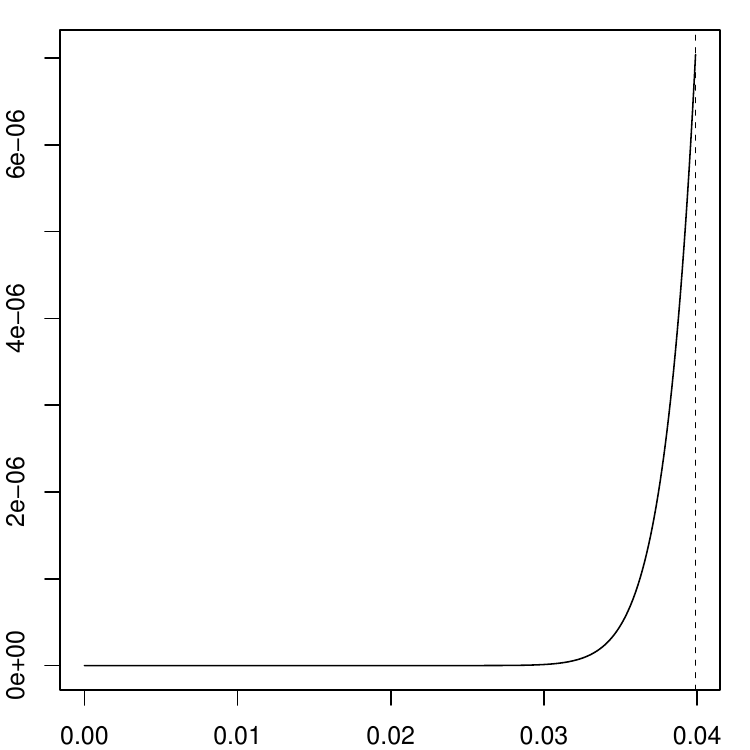} &
  \includegraphics[angle=0,scale=.4]{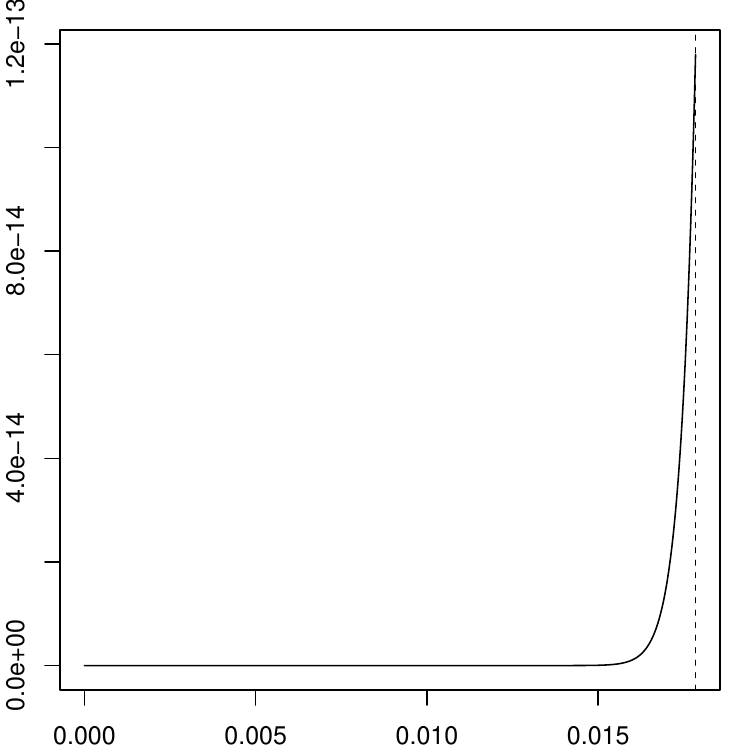} &
  \includegraphics[angle=0,scale=.4]{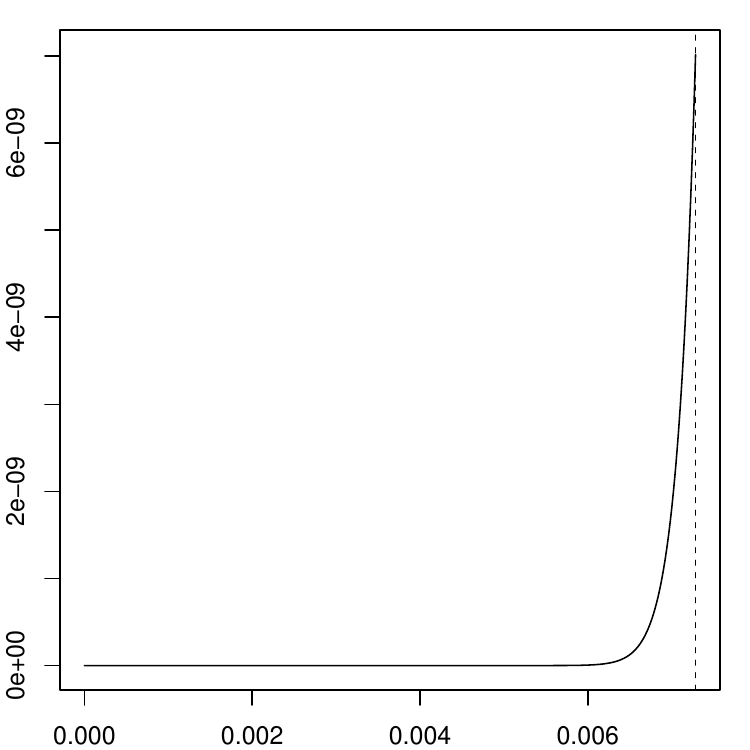}\\
  \footnotesize {$\rho=500$, $\nu=0.1$} &
  \footnotesize {$\rho=500$, $\nu=0.5$} & \footnotesize {$\rho=500$, $\nu=3$}
      \end{tabular}
  }
  \caption{{\small Error-bound \eqref{e:mnb} in terms of $\alpha$ for different values of $\rho$ and $\nu$, when $d=2$. The dotted line represents the maximal possible value of $\alpha$ following from \eqref{eq:maxrhoWM}.}}\label{fig:error}
  \end{figure}

\section{Alternative approximation of the density}\label{sec:approx3}

Let $S\subset\real^d$ be compact. In this appendix, in addition
to Assumption~\ref{e:assumption}, we assume the slightly stronger
condition that the spectral density $\varphi$ is strictly less than
$1$. This ensures that
all eigenvalues $\lambda_k$ are strictly less
than $1$ for all index $k$ so that the density $f$ in
Theorem~\ref{thm:density} is well-defined.
Recall that $f$ is given in terms of
$\Ct$ and $D$, cf.\ \eqref{e:unconddensity}. Below we introduce
computationally convenient approximations of $\Ct$ and $D$ which can
be used with \eqref{e:unconddensity} to obtain an approximation of
$f$.

\subsection{Convolution approximation of $f$}\label{sec:convolutionapprox}

We start by showing that $\Ctappstat$ given by
\begin{equation}\label{eq:Ctilde0conv}
  \Ctappstat(u) = \sum_{k=1}^\infty C_0^{\conv k}(u), \quad
  u\in\real^d,
\end{equation}
is well-defined, where
\begin{equation}\label{e:convdef}
C_0^{\conv 1}(u) = C_0(u),\quad C_0^{\conv k}(u)=\int
C_0^{\conv(k-1)}(x)C_0(u-x)\,\dee x,\quad u\in \real^d,\ k=2,3,\ldots.
\end{equation}
Since $0\le\varphi<1$ and $\varphi\in L^1(\real^d)$, for all
$p\in[1,\infty]$, we have $\varphi\in
L^p(\real^d)$. Define $\tilde\varphi = \varphi/(1-\varphi)$.
For any $u\in\real^d$, $\varphi(u) = \lim_{n\to\infty}
\tilde\varphi_n(u)$, where $\tilde\varphi_n(u)=\sum_{k=1}^n
\varphi(u)^k$. We see that $\tilde\varphi\in L^1(\real^d)$ since
\begin{equation*}
  \|\tilde\varphi\|_1 = \int\tilde\varphi(u)\dee u = \sum_{k=1}^\infty
  \int\varphi(u)^k\dee u \le \sum_{k=1}^\infty
  \|\varphi\|_\infty^{k-1}\int\varphi(u)\dee u = \frac{\|\varphi\|_1}{1-\|\varphi\|_\infty}
   < \infty
\end{equation*}
using the monotone convergence theorem to swap summation and
integration to obtain the second identity. Therefore $\mathcal F^{-1}\tilde\varphi$ is well-defined. Using the dominated convergence theorem and similar arguments
as above, we see that $\left(\mathcal F^{-1}\tilde\varphi\right)(u)$ is
equal to the right hand side of \eqref{eq:Ctilde0conv}.

For $x,y\in S$, we define $\Ctapp(x,y)=\Ctappstat(x-y)$ and use the approximation $\Ct(x,y)\approx\Ctapp(x,y)$. The expansion
\eqref{eq:Ctilde0conv} corresponds to \eqref{eq:CtSUM} with
$C^k_S(x,y)$ substituted by $C_0^{\conv k}(x-y)$.

Using the same
substitution in \eqref{e:DD} leads us to approximate $D$ by
\begin{equation}\label{eq:Dconv}
  \Dapp = |S| \sum_{k=1}^\infty C_0^{\conv k}(0)/k.
\end{equation}
Since $C_0^{\conv k}(0)=\int\varphi(u)^k\dee u$, we obtain an
alternative expression for $\Dapp$ by applying the
monotone convergence theorem,
\begin{equation*}
  \Dapp = |S| \int -\log(1-\varphi(u))\dee u  =
  |S| \int \log(1+\tilde\varphi(u))\dee u .
\end{equation*}
Then the convolution approximation of $f$ is defined by
\begin{equation}\label{e:unconddensityapp}
  \fapp(\{x_1,\ldots,x_n\}) = \exp({|S|-\Dapp}) \det
  [\Ctapp](x_1,\ldots,x_n).
\end{equation}
As mentioned above, the approximations $\Ctapp$ and $\Dapp$ involve
approximating $C_S^k(x,y)$ by $C_0^{\conv k}(u)$, where $u=x-y$. In
fact the approximations provide upper bounds, since $C_S^k(x,y)\le
C_0^{\conv k}(u)$ for all $x,y$ and $k$. Heuristically, when
approximating $C_S^k(x,y)$ by $C_0^{\conv k}(u)$, we expect that the
relative error increases as $k$ grows, since the approximation is
applied iteratively, cf.\ \eqref{e:fnat} and
\eqref{e:convdef}. However, the final approximations
$\Ct\approx\Ctapp$ and $D\approx\Dapp$ involve sums of $C_S^k(x,y)$
and $C_0^{\conv k}(u)$, and the terms with a large relative error may
only have a small effect if $C_0^{\conv k}(u)$ tends to zero
sufficiently fast for $k\to\infty$. Since $C_0^{\conv k}$ is a
covariance function, we have $C_0^{\conv k}(u)\le C_0^{\conv k}(0)$
for all $k=1,2,\dots$. Consequently, we expect that the accuracy of
approximating $f$ by $\fapp$ depends on how fast $C_0^{\conv k}(0)$
tends to zero. This is further discussed in the examples below.

\subsection{Examples}

To use the density approximation $\fapp$ in practice we truncate the
sums in \eqref{eq:Ctilde0conv} and \eqref{eq:Dconv}, i.e.\
\begin{equation*}
  \Ctappstat(u) \approx \sum_{k=1}^N C_0^{\conv k}(u)
  \quad\text{and}\quad \Dapp \approx |S| \sum_{k=1}^N C_0^{\conv k}(0)/k
\end{equation*}
where $N$ is a positive integer. Furthermore, we need closed form
expressions for $C_0^{\conv k}(u)$. For the normal variance mixture
models presented in Section~\ref{sec:examples}, we have $C_0^{\conv
  k}(u)=(\rho/\rhomax)^k h^{\conv k}(u)$, and so it suffices to find
closed form expressions for $h^{\conv k}$. For the Gaussian model,
\begin{equation*}
  h^{\conv k}(u) = (k\pi\alpha^2)^{-d/2}\exp(-\|u/\alpha\|^2/k),\quad
  u\in\real^d,
\end{equation*}
while for the \wm{} model,
\begin{equation*}
  h^{\conv k}(u)=\frac{\|u/\alpha\|^{\nu'}
    K_{\nu'}(\|u/\alpha\|)}{2^{\nu'-1}(\sqrt{\pi}\alpha)^{d}\Gamma(\nu'+d/2)},\quad
  u\in\real^d,
\end{equation*}
where $\nu'=k(\nu+d/2)-d/2$. We have no closed form expression for the
Cauchy model.

For both the Gaussian and the \wm{} covariance function, $h^{\conv
  k}(0)$ decays as $k^{-d/2}$ when $k\to\infty$, and therefore the
rate of convergence of $\Ctappstat(0) = \sum_{k=1}^\infty
(\rho/\rhomax)^k h^{\conv k}(0)$ depends crucially on $d$ and $\rho$.
For $d<3$, the series only converges if $\rho<\rhomax$, and the series
converges slowly when $\rho$ is close to
$\rhomax$.

\begin{figure}[!htbp]%
  \centering
  \includegraphics[scale=.45]{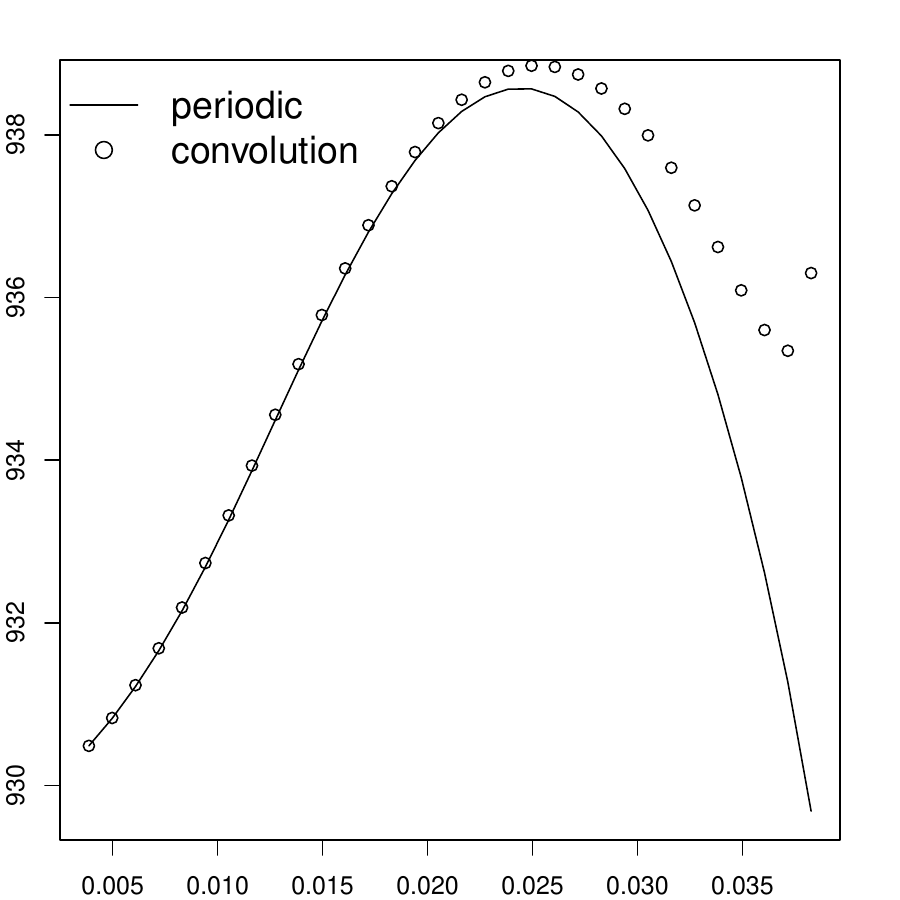}
  \caption{ Comparison of using the convolution and periodic density
    approximations to approximate the log-likelihood of the Gaussian model
    as a function of $\alpha$ based on a simulated dataset in the unit
    square with $\rho=200$ and $\alpha=0.02$. }
  \label{fig:comparedensities}
\end{figure}

Based on a simulated point pattern in the unit square,
Figure~\ref{fig:comparedensities} compares the approximations obtained
using the convolution and
periodic density approximations to approximate the log-likelihood for
the Gaussian model
with $\rho=200$ and $\alpha=0.02$. The simulated
point pattern has $\hat\rho=213$
points. In the likelihood calculations, $\rho=\hat\rho$ is fixed
such that the only varying parameter is $\alpha\in(0,\alpha_{\max})$,
where $\alpha_{\max}=1/\sqrt{\pi\hat\rho}=0.39$. For both
approximations, the truncation $N$ was increased until almost no change
appeared in the approximations. In this example, $N=256$ for the
convolution approximation and $N=512$ for the periodic
approximation. As in the simulation study in
Section~\ref{sec:simstudy},
the periodic approximation is giving effectively unbiased
estimates. However, similar simulation studies (not reported here)
using the convolution approximation yielded estimates of $\alpha$
which were positively biased, which is in agreement with
Figure~\ref{fig:comparedensities}. In particular using the convolution
approximation we get a large proportion of estimates with
$\hat\alpha=\alpha_{\max}$. For smaller values of $\alpha$,
$\alpha<\alpha_{\max}/2$ say, the two approximations are very similar,
and in this case $\rho/\rhomax<1/2$, so the convolution approximation
converges rapidly, and in this range of $\alpha$-values, a truncation of $N=10$
is sufficient to obtain stable results. This is computationally much
faster than using the periodic approximation with $N=512$, and
therefore the convolution approximation is appealing when
$\rho/\rhomax$ is small.

\bibliography{bibliography}
\bibliographystyle{chicago}
\end{document}